\documentclass[11pt,a4paper]{article}
\usepackage[margin=0.82in]{geometry}
\usepackage{amsmath,amssymb,amsthm,mathtools}
\usepackage{microtype}
\usepackage[hidelinks]{hyperref}
\usepackage{needspace}

\allowdisplaybreaks[2]
\newtheorem{theorem}{Theorem}[section]
\newtheorem{lemma}[theorem]{Lemma}
\newtheorem{proposition}[theorem]{Proposition}
\newtheorem{corollary}[theorem]{Corollary}
\theoremstyle{remark}

\newcommand{\rk}{\operatorname{rank}}
\newcommand{\msr}{\operatorname{msr}}
\newcommand{\F}{\mathbb F}

\title{Triangle-free Rank--Ramsey graphs from ordered incidence lifts\thanks{This work is supported by the National Natural Science Foundation of China (Nos. 12371348 and 12201258) and the High-Quality Science and Technology Cultivation Project of Jiangsu Normal University (No. JSNUGZL2026069).}}
\author{Yong Lu\thanks{Corresponding author.}\quad Qi Wu\quad Qiannan Zhou\\[2mm]
\small School of Mathematics and Statistics, Jiangsu Normal University,\\
\small Xuzhou, Jiangsu 221116, People's Republic of China\\
\small E-mails: \texttt{luyong@jsnu.edu.cn}, \texttt{wuqimath@163.com}, \texttt{qnzhoumath@163.com}}
\date{}
\begin{document}
\maketitle
\begin{abstract}
For a graph $G$ with adjacency matrix $A_G$ and a coefficient field
$\mathbb K$, put $\rho_{\mathbb K}(G)=\operatorname{rank}_{\mathbb K}(A_G+I)$.
We answer the triangle-free Rank--Ramsey existence question of
Beniamini, Linial and Shraibman. For every order $n$, we construct a
triangle-free graph $H_n$ such that, simultaneously over all coefficient fields,
\[
 \rho_{\mathbb K}(H_n)=\bigl(4(2/3)^{3/4}+o(1)\bigr)n^{3/4},
\]
with an error uniform in the field. The graphs can be chosen Hamiltonian
for all sufficiently large $n$. Our method combines ordered layers of
point--line incidence flags with a common low-dimensional endpoint space.
We determine the exact real rank, independence number and chromatic
number of the affine lifts. For all triangle-free complete incidence lifts,
we obtain a sharp two-term lower bound for complete incidence lifts,
with sharpness witnessed by projective-plane incidence graphs over
suitable coefficient fields. We also establish a third-order estimate
and stability near equality. On the same
explicit graphs, both the Lov\'asz number and minimum semidefinite rank
exceed the prescribed Boolean-matrix rank by polynomial factors. Thus neither parameter has a
universal constant-factor upper bound in terms of complement rank. We determine
Shannon capacity in the short-layer range.
\end{abstract}

\smallskip
{\small
\noindent\textbf{Keywords.} Rank--Ramsey graph; triangle-free graph;
complement rank;  ordered lift; finite geometry.
\par\smallskip
\noindent\textbf{2020 Mathematics Subject Classification.}
 05C55, 05C50, 05B25.
\par}
\medskip

\section{Introduction and main results}

In the Rank--Ramsey problem of Beniamini et al.\ \cite{BLS}, we
seek graphs with small clique number and small shifted adjacency rank.
For a finite simple graph $G$, write $V(G)$ and $E(G)$ for its vertex
and edge sets, $\overline G$ for its complement, and $A_G$ for its
adjacency matrix. The order of $G$ is $|V(G)|$. We use $I$ for identity matrices, $J$ for all-ones
matrices, and $\mathbf1$ for all-ones vectors; subscripts specify sizes
or indexing sets. The superscript $\mathsf T$ denotes transpose.
Over a field $\mathbb K$, put
$\rho_{\mathbb K}(G)=\rk_{\mathbb K}(A_G+I)$, and abbreviate
$\rho_{\mathbb R}(G)$ to $\rho(G)$. We call this \emph{complement
rank}, since $A_{\overline G}=J-(A_G+I)$ and the two ranks differ
by at most one.

Let $\omega(G)$ be the largest size of a clique in $G$.
For integers $n,d\ge1$ and a field $\mathbb K$, define
\[
 \nu_{d,\mathbb K}(n)=\min\{\rho_{\mathbb K}(G):|V(G)|=n,\ \omega(G)\le d\},
 \qquad \nu_d(n)=\nu_{d,\mathbb R}(n).
\]
Let $\alpha(G)$ be the largest size of an independent set, and let
$\chi(G)$ be the minimum number of colors in a proper vertex coloring.
An independent set indexes an identity principal submatrix of $A_G+I$,
so $\alpha(G)\le\rho_{\mathbb K}(G)$ over every field.
Thus the rank condition implies an ordinary Ramsey bound, although
small independence number alone does not control rank.
The results of Ajtai et al.\ \cite{AKS} and Kim~\cite{Kim}
show that the smallest independence number among $n$-vertex
triangle-free graphs has order $\sqrt{n\log n}$. Controlling
$\rho(G)$ is a stronger requirement: one must bound the rank of
the prescribed Boolean matrix $A_G+I$, not only exclude large
independent sets.
We write $f\sim g$ when $f/g\to1$ and $f\asymp g$ when this
ratio stays bounded above and away from zero. We use the usual
$O$, $o$ and $\Omega$ notation. An estimate is \emph{uniform in the
coefficient field} when its error bound is independent of that field.
All logarithms are natural.

The motivation comes from communication complexity. For a Boolean
function $f:X\times Y\to\{0,1\}$ on finite sets, put
$M_f=(f(x,y))$ and $r=\operatorname{rank}_{\mathbb R}M_f$.
Let $D(f)$ be the minimum worst-case number of bits exchanged by a
deterministic protocol computing $f(x,y)$, when the parties know
$x$ and $y$ separately.
Lov\'asz and Saks~\cite{LovaszSaks} conjectured a bound polynomial
in $\log(r+1)$. Lovett~\cite{Lovett} proved
$D(f)=O(\sqrt r\log(r+1))$, and Sudakov and
Tomon~\cite{SudakovTomon} improved this to $O(\sqrt r)$.
The Rank--Ramsey problem asks for the corresponding clique
structure in low-rank Boolean matrices~\cite{BLS}.

Beniamini et al.\ \cite[Theorem~1.1 and Open Problem~8.1]{BLS}
constructed Rank--Ramsey graphs with a fixed clique bound as small
as $41$ and asked whether $\nu_2(n)=O(n^{1-c})$ for some $c>0$.
Using finite geometry, Bamberg et al.\ \cite{BBIR} obtained
triangle-free graphs giving the upper bound
$\nu_2(n)\le(1/4+o(1))n$ along an unbounded sequence of orders. Our exponent $3/4$ improves the order of growth,
not just this linear coefficient.

Codenotti et al.\ \cite{CPR} used directed incidence graphs and
ordered symmetrization to construct ordinary triangle-free Ramsey
graphs; Beniamini et al.\ \cite[Appendix~A]{BLS} analyzed the
relevant shifted adjacency matrix. Kostochka et al.\ \cite[Lemma~2.2]{KPR} proved that
an ordered superline graph on the edges of a bipartite base has
independence number smaller than the number of incidence vertices,
for every ordering. Kocbek~\cite{Kocbek} developed related geometric Ramsey
constructions, controlling independence numbers rather than the rank
of the fixed matrix $A_G+I$. As recalled in Beniamini et al.\ \cite[Appendix~A]{BLS},
the earlier construction yields triangle-free graphs satisfying
$\operatorname{rank}_{\mathbb R}(A_G-2I)=O(|V(G)|^{3/4})$.
This is a different, non-Boolean matrix shift; it does not bound
$\operatorname{rank}_{\mathbb R}(A_G+I)$.
Our new step is to use ordered copies so that
the low-dimensional row and column spaces of the directed shifted
matrix are shared across the layers, rather than counted afresh in
each layer.

An incidence flag is an edge of a bipartite incidence graph,
viewed as an incident point--line pair. We use independent, ordered
layers of flag copies. Between two layers, a flag is joined to
exactly those later flags to which it points in the directed
incidence graph. The endpoint factorization from \cite{CPR} then controls
its complement rank.
A \emph{transitive directed triangle} consists of distinct vertices
$x,y,z$ and arcs $x\to y$, $y\to z$, $x\to z$; extra reverse arcs
are allowed. A triangle in the lift requires this pattern in the base.
If the bipartite incidence graph contains no four-cycle as a subgraph,
which we abbreviate to \emph{$C_4$-free}, this pattern is absent.

For $B=(L\cup R,E)$, its directed flag graph $D_B$ has vertex set $E$:
$e=(u,v)$ points to $f=(u',v')$ exactly when $u\ne u'$, $v\ne v'$
and $uv'\in E$. Let $m=|E|$, let $k$ count the non-isolated vertices,
and let $c$ count the components containing an edge. For an integer
$t\ge1$, the complete ordered lift $\Gamma(B,t)$ has $t$ independent
layers of all $m$ flags; for $i<j$, the copies $(e,i)$ and $(f,j)$ are
adjacent exactly when $e\to f$ in $D_B$. Its order is $mt$.
Section~2 proves $\rho_{\mathbb K}(\Gamma(B,t))\le m+(t-1)(k-c)$.
``Complete'' refers to retaining all flags in every layer, not to a
complete graph.

For a prime power $q$, let $\F_q$ be a field with $q$ elements.
Let $B_q$ be the incidence graph of points $(x,y)\in\F_q^2$ and
lines $y=ax+b$ with $a,b\in\F_q$, and put $G(q,t)=\Gamma(B_q,t)$.
The geometric field $\F_q$ and the coefficient field $\mathbb K$
used to compute rank need not have the same characteristic.
There are $q^3$ flags and $2q^2$ incidence vertices. The general
bound therefore has size $q^3+O(tq^2)$, whereas the lift has $tq^3$
vertices. Taking $t$ proportional to $q$ balances the rank terms
and gives the exponent $3/4$.

\begin{theorem}[Affine lifts]\label{thm:affine}
For every prime power $q\ge2$ and integer $t\ge1$, the graph
$G(q,t)$ is triangle-free, has order $q^3t$, and is regular of
degree $(q-1)^2(t-1)$. Over every field $\mathbb K$,
\[
 \rho_{\mathbb K}(G(q,t))\le q^3+(2q^2-1)(t-1).
\]
Moreover, for every $q\ge2$ and $t\ge1$,
\[
 \alpha(G(q,t))=\max\{q^3,tq^2\},\qquad
 \chi(G(q,t))=\min\{q,t\}.
\]
The rank bound is an equality over $\mathbb R$ when $q\ge3$.
If $q\ge3$ and $t\ge3$, then $G(q,t)$ has a Hamilton cycle, that is,
a cycle containing every vertex.
\end{theorem}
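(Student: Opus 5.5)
The first three assertions are direct checks from the definitions. \emph{Triangle-freeness}: by the remark preceding the theorem, a triangle in $\Gamma(B,t)$ forces a transitive directed triangle in $D_B$, and such a triangle forces a four-cycle in $B$. The affine incidence graph $B_q$ is $C_4$-free, since two distinct points lie on at most one line $y=ax+b$.

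\emph{Order and degree}: $B_q$ has $q^3$ flags, giving order $q^3t$. For the degree, fix a flag $e=(p,\ell)$. An out-neighbour $f=(p',\ell')$ of $e$ in $D_{B_q}$ needs a line $\ell'\ne\ell$ through $p$ ($q-1$ choices) and a point $p'\ne p$ on $\ell'$ ($q-1$ choices). So the out-degree is $(q-1)^2$, and dually the in-degree is $(q-1)^2$. The copy $(e,i)$ is joined to out-neighbours in later layers and to in-neighbours in earlier layers, and these sets are disjoint. Hence its degree is $(q-1)^2(t-1)$.

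\emph{Rank bound}: apply the Section~2 bound $\rho_{\mathbb K}(\Gamma(B,t))\le m+(t-1)(k-c)$ with $m=q^3$, $k=2q^2$ and $c=1$; $B_q$ is connected because any two points are joined by a path through lines.

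For $\alpha$ and $\chi$, I will first produce the lower bounds on $\alpha$ and the upper bounds on $\chi$.
\begin{itemize}
\item One layer is independent, so $\alpha\ge q^3$.
\item For a fixed slope $a$, let $P_a$ be the set of $q^2$ flags on lines of slope $a$. It has no internal arcs: if $p\in\ell$ is incident to $\ell'\parallel\ell$, then $\ell'=\ell$, contradicting $\ell\ne\ell'$. So $P_a$ copied into all $t$ layers is independent, giving $\alpha\ge tq^2$.
\item Colouring by layer gives $\chi\le t$, and colouring by slope class $P_a$ (across all layers) gives $\chi\le q$.
\end{itemize}
Conversely, $\chi\ge |V|/\alpha=tq^3/\max\{q^3,tq^2\}=\min\{q,t\}$. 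So everything reduces to the upper bound $\alpha\le\max\{q^3,tq^2\}$.

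For this bound, let $S=\bigcup_i S_i\times\{i\}$ be independent, where $S_i$ is a flag set in layer $i$. The constraint is that there is no arc from $S_i$ to $S_j$ for any $i<j$. I would localize at each point $p$. For a flag $f$, let $\ell_i(p)$ be the set of lines $\ell$ with $(p,\ell)\in S_i$, and let $m_i(p)$ be the number of flags $(p',\ell')\in S_j$, $j>i$, with $p'\ne p$, $\ell'\ni p$ and $\ell'\notin \ell_i(p)$. Independence says that whenever $(p,\ell)\in S_i$, every later flag on a line $\ell'\ni p$ with $\ell'\ne\ell$ must sit at the point $p$ itself. The plan is to turn this into a per-point (or per-pencil) inequality of the shape
\[
\sum_i|S_i\cap\text{flags at }p|\ \le\ \max\{q,\,t\}\cdot(\text{local weight}),
\]
in the spirit of the ordered superline lemma \cite[Lemma~2.2]{KPR}, and then sum over points, using that each line carries $q$ points. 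The intended dichotomy is this: either some point's flags appear with two different lines before later flags on its pencil, which kills the pencil in later layers, or each point uses a single "parallel-like" line, which yields the $tq^2$ regime. I expect this counting to be the main obstacle, and I would first test it on $q=2,3$ to find the right weights.

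For equality of the rank over $\mathbb R$ when $q\ge3$, I would exploit the symmetry of the construction. The translation group $\F_q^2$ together with the shears acts on flags and commutes with the layered block-triangular structure of $A+I$. With $\mathsf T$ the strictly upper block matrix of ones and $N$ the arc matrix of $D_{B_q}$, I would write
\[
A+I=I_t\otimes I+\mathsf T\otimes N+\mathsf T^{\mathsf T}\otimes N^{\mathsf T}.
\]
Here $N=PL^{\mathsf T}$ minus correction terms, where $P$ and $L$ are the point and line incidence matrices. Diagonalizing by characters of $\F_q^2$ reduces the rank computation to small blocks. I would count the nonzero-determinant blocks to obtain exactly $q^3+(2q^2-1)(t-1)$. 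Alternatively, I would exhibit a nonsingular principal submatrix of that size: one full layer plus point/line "test" vectors in each of the other layers.

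For Hamiltonicity when $q,t\ge3$, I would build an explicit cycle. The graph between layers $i$ and $i+1$ is the bipartite double cover of $D_{B_q}$, which is $(q-1)^2$-regular and hence has perfect matchings. I would choose matchings between consecutive layers, closing up through a layer-$1$ to layer-$t$ connection, so that their union is a single cycle, adjusting by edge swaps using the affine-group transitivity to merge cycles.
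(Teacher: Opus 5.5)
Your treatment of triangle-freeness, order, regularity, the field-uniform rank bound, the two lower bounds on $\alpha$, the layer and slope colourings, and the reduction $\chi\ge|V|/\alpha$ is correct and matches the paper. The first genuine gap is the upper bound $\alpha(G(q,t))\le\max\{q^3,tq^2\}$. You give only a plan for it, and its natural per-point version (at most $\max\{q,t\}$ copies per point, summed over the $q^2$ points) is false. Flags at the same point are never joined, since an arc needs distinct points. So all $q$ flags at one point, copied into all $t$ layers, form an independent set with $qt>\max\{q,t\}$ copies at that point whenever $q,t\ge2$. The bound is global, not local.

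The paper supplies three ingredients you are missing. (i) \emph{Endpoint compression.} For an independent set, record the first and last occupied layers $f_v,\ell_v$ of each flag. At each cut $i$, the sets $U_i=\{v:f_v\le i\}$ and $V_i=\{v:\ell_v>i\}$ admit no arc from $U_i$ to $V_i$. Averaging over the $t-1$ cuts shows that $\alpha$ equals the maximum of $|U|+|V|+(t-2)|U\cap V|$ over such pairs. (ii) The weighted inequality $|U|+|V|+(q-2)|U\cap V|\le q^3$. It comes from $0=\xi^{\mathsf T}A\zeta=(P_L\xi)^{\mathsf T}(P_R\zeta)$ for the indicator vectors: these nonnegative vectors must have disjoint supports, and a support count at the endpoints finishes it, since every vertex has degree $q$ and each flag of $U\cap V$ has a singleton endpoint. (iii) $\alpha(F(D_{B_q}))\le\alpha(B_q)=q^2$, via the superline lemma of Kostochka, Pudl\'ak and R\"odl and a perfect matching. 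Since $U\cap V$ is independent in $F(D_{B_q})$, the objective is at most $q^3+(t-q)|U\cap V|$. This gives $q^3$ for $t\le q$ and $tq^2$ for $t\ge q$.

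The real rank equality and Hamiltonicity are likewise only outlined. For real equality you must show that the kernel of $A_{G(q,t)}+I$ is exactly the $(t-1)(q^3-2q^2+1)$-dimensional space of vectors with every layer in $W^\perp$ and layer sum zero. Your character decomposition never reaches a determinant that you could show is nonzero for every $t$. The paper splits $W$ along the singular pairs of the incidence matrix; for the Wenger graph, $\sqrt q$ is the only positive nonprincipal singular value. Each mixed pair over $t$ layers then reduces to a $2\times2$ matrix with determinant $g_t(q)$, given by a three-term recurrence. The paper proves $g_t(q)\ne0$ for all $t$ by cases ($q=3$, $q=4$, $q\ge5$) and checks the scalar blocks. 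This is not automatic: for $q=2$ the real rank is $6t+2-4\mathbf1_{\{t\equiv2\pmod4\}}-2\mathbf1_{\{t=2\}}$, strictly below $7t+1$ for $t\ge2$.

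For Hamiltonicity, perfect matchings between consecutive layers plus a closing matching give only a $2$-factor. ``Edge swaps using transitivity'' is not an argument that its cycles can be merged. Indeed, for $q=2$ the lift is disconnected, because $D_{B_2}$ is two directed $4$-cycles, so any argument must use something specific to $q\ge3$. The paper identifies $D_{B_q}$ with the right Cayley digraph of the unitriangular group $U_q$ of order $q^3$, with connection set $\{(b,h,bh):b,h\ne0\}$. It checks that this set generates $U_q$ when $q>2$ and gets a directed Hamilton cycle from Witte's theorem on Cayley digraphs of prime-power order. It then lifts this cycle through a cyclic ordering of the layers with net index shift $1$ or $2$, plus a two-edge exchange when $q^3$ and $t$ are both even.
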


Choosing the number of layers proportional to $q$ and passing to
induced subgraphs answers the triangle-free existence question.
The same construction also separates complement rank from two
other graph parameters.

Let $\vartheta(G)$ be the Lov\'asz number, the standard semidefinite
upper bound on $\alpha(G)$ introduced by Lov\'asz~\cite{Lovasz}.
A real symmetric matrix $M$ is positive semidefinite if
$x^{\mathsf T}Mx\ge0$ for every real vector $x$; equivalently, it is a
Gram matrix of real vectors. Following Beniamini et al.\ \cite{BLS},
define $\msr(G)$ as its minimum possible rank when the diagonal entries
are one and entries at distinct nonadjacent vertices are zero.
Entries at edges may also be zero. This convention matters:
$A_G+I$ need not be positive semidefinite.

Haemers' minrank~\cite{Haemers} minimizes rank over matrices
with nonzero diagonal and zeros at nonedges, over the same field;
it is at most $\rho_{\mathbb K}(G)$. Beniamini et al.\
\cite[Section~6 and Open Problem~8.2]{BLS} asked about the relation
between the Lov\'asz number and complement rank. Their
Theorem~6.2 exhibits a triangle-free family with unbounded
$\rho/\vartheta$. We obtain the reverse polynomial separation,
$\vartheta/\rho\to\infty$, on triangle-free graphs.
Thus the two parameters cannot be compared in either direction
by a universal constant.

\Needspace{15\baselineskip}
\begin{theorem}[Triangle-free graphs at every order]
\label{thm:main}
Put $C_0=3$ and $C_*=4(2/3)^{3/4}$. For every integer
$n\ge1$, there is a triangle-free graph $H_n$
of order $n$ with $\msr(H_n)=\lceil n/2\rceil$ and
$\rho_{\mathbb K}(H_n)\le C_0n^{3/4}$ over every field. The graphs can be chosen Hamiltonian for every
sufficiently large $n$. As $n\to\infty$,
\[
 \rho_{\mathbb K}(H_n)=(C_*+o(1))n^{3/4},
 \qquad
 \vartheta(H_n)=\left(\tfrac12(3/2)^{1/8}+o(1)\right)n^{7/8},
\]
with rank error uniform in $\mathbb K$.
\end{theorem}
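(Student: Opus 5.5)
The plan is to take $H_n$ as an induced subgraph of a complete affine lift $G(q,t)$ from Theorem~\ref{thm:affine}, with $t$ proportional to $q$, and then optimise the proportionality constant. Write $t=\lceil sq\rceil$. Then $|V(G(q,t))|\approx sq^4$, and by Theorem~\ref{thm:affine} we have $\rho_{\mathbb K}\le q^3+(2q^2-1)(t-1)\approx(1+2s)q^3$. Hence
\[
 \rho_{\mathbb K}/n^{3/4}\approx(1+2s)s^{-3/4}.
\]
Minimising over $s$ gives $s=3/2$, with value $4(3/2)^{-3/4}=C_*$.

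\emph{Choice of $q$ and $t$.} For a given $n$, I would let $q$ be the smallest prime with $\lceil 3q/2\rceil q^3\ge n$. Put $t=\lceil 3q/2\rceil$. Let $H_n$ be the subgraph induced by the first $n$ vertices in an order that fills whole layers first, so that the vertices of the last, partial layer come last.

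\emph{Upper bounds.} Induced subgraphs are triangle-free, and a principal submatrix has no larger rank, so $\rho_{\mathbb K}(H_n)\le q^3+(2q^2-1)(t-1)$ over every field. Since consecutive primes satisfy $p_{k+1}/p_k\to1$, we get $q\sim(2n/3)^{1/4}$, which yields the upper half of the asymptotic, uniformly in $\mathbb K$. For the crude bound $C_0n^{3/4}$ at every $n$, I would use Bertrand's postulate ($q<2q'$) to bound the loss explicitly, and check small $n$ directly; for very small $n$, an independent set or a matching works.

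\emph{Hamiltonicity.} I would keep the deleted part inside a single last layer. The Hamilton cycle of $G(q,t)$ from Theorem~\ref{thm:affine} should then be reroutable through the partial layer. Alternatively, one can restrict $n$ to values where the last layer can be cut along a path segment of the cycle structure. This is a technical point that I would settle by revisiting the proof of Hamiltonicity.

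\emph{Lower bound on rank, uniform in $\mathbb K$.} $H_n$ contains $G(q,t')$ with $t'=\lfloor n/q^3\rfloor$, so it suffices to prove a field-independent lower bound $\rho_{\mathbb K}(G(q,t'))\ge q^3+(t'-1)(2q^2-O(q))$. Order the rows by layer. The diagonal blocks are identities, and an off-diagonal block between layers $i<j$ is the endpoint-factorised matrix $N=PQ^{\mathsf T}$-type from Section~2. Subtracting consecutive layer rows leaves, in each new layer, a copy of the flag-versus-endpoint structure. The flag-to-point and flag-to-line matrices are $0/1$ matrices with one $1$ per row, so each has rank $q^2$ over every field; together they give rank at least $2q^2-O(q)$ independently of characteristic. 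I expect making this elimination rigorous over arbitrary $\mathbb K$ to be one of the two main obstacles.

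\emph{Minimum semidefinite rank.} For the upper bound, the Hamilton cycle gives a perfect (or near-perfect) matching. Assign the same standard basis vector to both ends of each matching edge. Nonadjacent vertices then receive distinct, hence orthogonal, basis vectors, so the rank is $\lceil n/2\rceil$. For the lower bound, I would invoke the known fact that a triangle-free graph has $\msr\ge\lceil n/2\rceil$; if needed, I would reprove it from the Gram-vector picture, since the neighbourhoods of the vertices are independent.

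\emph{Lov\'asz number.} The target value corresponds to $\vartheta\approx\tfrac12tq^{5/2}$. I would obtain the upper bound from the Hoffman-type bound $\vartheta\le n(-\lambda_{\min})/(d-\lambda_{\min})$, with degree $d\approx q^2t$ (the graph is regular by Theorem~\ref{thm:affine}). This requires computing $\lambda_{\min}\approx-\tfrac12tq^{3/2}$. The spectrum should be computable because the layer structure is a Kronecker-type combination of the strictly upper triangular $t\times t$ pattern with the directed flag matrix, whose singular values come from the $C_4$-free affine plane (order $q^{1/2}$ relative to $q$).

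For the matching lower bound, I would use $\vartheta(G)\vartheta(\overline G)\ge n$ together with a Hoffman bound on the complement. This should be done for $G(q,t')$ and transferred to $H_n$ using the monotonicity of $\vartheta$ under induced subgraphs. I expect this spectral computation, and controlling it uniformly when $t\asymp q$, to be the hardest part.
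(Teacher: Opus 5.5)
The genuine gap is the Lov\'asz number, where both of your spectral bounds fail. Your value $\lambda_{\min}\approx-\tfrac12tq^{3/2}$ is only the Rayleigh quotient $-(t-1)(a_q+h_q)$ of layer-constant vectors. On each mixed space of Lemma~\ref{lem:affine-blocks}, the lift acts as $\left(\begin{smallmatrix}-a_q(J_t-I_t)&(h_q+\tau_q)P_t+(h_q-\tau_q)P_t^{\mathsf T}\\(h_q+\tau_q)P_t^{\mathsf T}+(h_q-\tau_q)P_t&-a_q(J_t-I_t)\end{smallmatrix}\right)$, with $h_q+\tau_q\sim q^{3/2}$, $h_q-\tau_q=O(q^{1/2})$ and $a_q=O(q)$. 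Since $\|P_t\|\sim2t/\pi$, the least eigenvalue is $-(2/\pi+o(1))tq^{3/2}$ for $t\asymp q$. This is consistent with the cutoff $T_q\sim\frac\pi2\sqrt q$ of Theorem~\ref{thm:theta-cutoff}, which by Proposition~\ref{prop:general-arc-theta} is where $\lambda_{\min}$ first drops below $-(q-1)^2$. The ratio bound therefore gives only $\vartheta(G(q,t))\le(2/\pi+o(1))tq^{5/2}$, that is, the coefficient $\frac2\pi(3/2)^{1/8}$ instead of $\frac12(3/2)^{1/8}$. The reason is that this bound is the dual certificate $Y\propto A_{G(q,t)}$, which weights all pairs of layers equally. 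The sharp upper bound needs weights that decay with layer distance. The paper uses the kernel $(r^{|i-j|})$ with $r=(q-1)^2/((q-1)^2+2\beta_q)$ in \eqref{eq:weighted-dual}, verified blockwise through its Fourier symbol in Theorem~\ref{thm:theta-bounds}. Your lower bound fails even more badly. The same blocks give $\lambda_2(G)\sim\frac2\pi tq^{3/2}$, so Hoffman on the complement gives $\vartheta(\overline G)\le n(1+\lambda_2)/(n-d+\lambda_2)\approx\frac2\pi tq^{3/2}$, with $d=(q-1)^2(t'-1)$. Hence $n/\vartheta(\overline G)$ is only of order $q^{3/2}$, below the trivial bound $q^3$. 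What works is $\vartheta(G(q,t))\ge t\,\vartheta(F_q)$, from the independent blow-up in \eqref{eq:theta-transfer}. Here $\vartheta(F_q)=q^3\beta_q/((q-1)^2+\beta_q)\sim\frac12q^{5/2}$, because the ratio bound is exact on the edge-transitive base.

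Three further steps are unresolved. First, Hamiltonicity of your $H_n$ is not routine rerouting. The lift is triangle-free, so no vertex is adjacent to both ends of a cycle edge. You can neither splice in nor shortcut past a single vertex of the partial last layer, and that layer is independent. Restricting $n$ is not allowed, and the $\lfloor n/2\rfloor$-matching behind $\msr(H_n)=\lceil n/2\rceil$ depends on this unproved cycle. The paper instead uses the interpolation of Lemma~\ref{lem:hamilton-lift}. With $m=q^3$ odd and $k\equiv n\pmod2$, it places a $k$-layer Hamilton cycle on layers $1,4,\dots,k+2$ of $G(q,k+2)$. It then replaces $h=(n-km)/2$ edges $(e_i,1)(e_{i+1},k+2)$ by paths through $(e_{i+1},3)$ and $(e_i,2)$. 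Second, Bertrand cannot give $C_0=3$. With $u=q/n^{1/4}$, the layer bound for your $H_n$ is about $(u^3+2/u)n^{3/4}$, and $u^3+2/u\le3$ forces $u\le1$. For your choice of $q$ this needs consecutive primes within ratio $(3/2)^{1/4}\approx1.107$. Bertrand allows $u$ near $2(2/3)^{1/4}$, where the bound is about $7$, and even Nagura's $6/5$ falls short. The paper takes the largest prime power $q\le n^{1/4}$ and $t=\lceil n/q^3\rceil$, so that $u\in[13/16,1]$. It treats $n\le2400$ by matchings and the hosts $G(5,11)$ and $G(7,6)$. Third, your rank lower bound is fixable but not as sketched, since the ranks of $X$ and $Y$ do not control $\operatorname{rank}(CQC^{\mathsf T})$. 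Sylvester's inequality gives $\operatorname{rank}_{\mathbb K}(A-I)\ge2(2q^2-1)-2q^2$ over every field. The Schur-complement step of Theorem~\ref{thm:rank-comparison} then yields $\rho_{\mathbb K}(G(q,t'))\ge q^3+(t'-2)(2q^2-2)$, which suffices.
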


The graphs $H_n$ are chosen independently of the coefficient field.
Finite-field ranks may differ at particular orders; the assertion
is that their leading asymptotic coefficient is common, with an error
that is uniform in the field.
Thus $\nu_2(n)=O(n^{3/4})$ and, on the same graphs,
$\vartheta(H_n)/\rho(H_n)\asymp n^{1/8}$ and
$\msr(H_n)/\rho(H_n)\asymp n^{1/4}$.
Section~5 records the main parameter consequences and distinguishes
new calculations for the lifts from the classical inequalities used
to obtain them. Large theta--independence gaps without the
triangle-free condition were previously studied by Feige~\cite{Feige}.

For complete incidence lifts, we determine the sharp leading and
second-order terms of a uniform rank lower bound, even for admissible
incidence bases containing four-cycles.

\begin{theorem}[Two sharp terms and a third-order estimate]
\label{thm:methodconstant}
Put $D_*=5\sqrt{2/3}$ and $E_+=\frac{13}{6}(2/3)^{1/4}$.
There is an absolute constant $C$ such that every triangle-free
complete incidence lift $\Gamma(B,t)$ of order $N=t|E(B)|\ge1$
over a simple bipartite graph $B$ satisfies
\[
 \rho_{\mathbb K}(\Gamma(B,t))
 \ge C_*N^{3/4}-D_*N^{1/2}-E_+N^{1/4}-C
\]
over every field $\mathbb K$.
Both $C_*$ and $D_*$ are sharp for a bound uniform over all incidence
bases and coefficient fields. One extremal sequence uses the point--line incidence graph of the
projective plane over $\F_q$, with $q=11^{2j+1}$, $t=(3q-5)/2$,
and coefficient field $\F_5$.
\end{theorem}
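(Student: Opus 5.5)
The plan is to prove a structural lower bound of the form
\[
 \rho_{\mathbb K}(\Gamma(B,t))\;\ge\; m+(t-1)\,r_{\mathbb K}(B),
\]
where $m=|E(B)|$ and $r_{\mathbb K}(B)$ is the rank of a matrix built from the endpoint incidence structure. I would then show $r_{\mathbb K}(B)\ge k-c-O(\text{defect})$ uniformly in $\mathbb K$, bound $m$ in terms of $k$ using the triangle-free hypothesis, and optimize.

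\textbf{Block reduction.} Write the directed flag matrix $M$ of $D_B$. With the layers in order, $A+I=I_t\otimes I_m+U\otimes M+U^{\mathsf T}\otimes M^{\mathsf T}$, where $U$ is the strictly upper triangular all-ones matrix. I would subtract consecutive block rows, and then consecutive block columns. This turns the matrix into one whose first diagonal block is $I_m$ and whose remaining blocks are controlled by differences such as $I_m-M$ and $I_m-M^{\mathsf T}$ (or $M-M^{\mathsf T}$). Those differences factor through the endpoint space of dimension $k$, as in the upper bound of Section~2. A Schur-complement step against the $I_m$ blocks should then leave a block-bidiagonal matrix with $t-1$ copies of a fixed $k$-dimensional kernel $R$, giving $\rho\ge m+(t-1)\rk_{\mathbb K}R$.

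\textbf{Uniform lower bound for $\rk R$.} The matrix $R$ is essentially $N_L N_R^{\mathsf T}$-type data built from the point and line incidence matrices of $B$. Its rank can drop in special characteristics; this is exactly what the $\F_5$, $q=11^{2j+1}$ projective example exploits. So I would not aim for $k-c$. Instead I would aim for $\rk R\ge k-c-O(\sqrt k)$, proved by exhibiting a large nonsingular minor via a combinatorial ordering: a triangular pattern built from a spanning forest of $B$, together with one extra flag per vertex where possible. This only uses entries in $\{0,1\}$ with a unit diagonal, so it is field-independent. I expect this to be the main obstacle. Getting the defect term right to second order, since $D_*$ must be sharp, is delicate, and I would first try to prove the defect is at most the number of vertices of degree~$\le1$ plus an $O(\sqrt m)$ correction, and then check it against the projective example.

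\textbf{Counting and optimization.} Triangle-freeness of the lift forbids transitive directed triangles in $D_B$. I would show this gives a Reiman-type inequality, roughly $m\le \tfrac{k}{4}\bigl(1+\sqrt{4k-3}\bigr)$ up to lower-order terms, even when $B$ has some four-cycles, via Cauchy--Schwarz on codegrees. With $N=tm$, minimize $m+(N/m-1)(k-O(\sqrt k))$ subject to this constraint. At leading order, $m\approx k^{3/2}/(2\sqrt2)$ and $k\approx 2q^2$ give $q^4=2N/3$ and the value $4(2N/3)^{3/4}=C_*N^{3/4}$. Expanding the optimum to two further orders gives the $-D_*N^{1/2}$ and $-E_+N^{1/4}$ terms, with everything else absorbed into $C$.

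\textbf{Sharpness.} Finally, for $B$ the incidence graph of $PG(2,q)$ with $t=(3q-5)/2$ over $\F_5$, I would compute $\rk R$ exactly through the eigenvalues $q+1$ and $\pm\sqrt q$ of the incidence structure reduced mod~$5$. I would then match the first two terms of the resulting rank against $C_*N^{3/4}-D_*N^{1/2}$.
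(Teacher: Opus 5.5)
\textbf{Main gap.} Your plan rests on $\rho_{\mathbb K}(\Gamma(B,t))\ge m+(t-1)\rk_{\mathbb K}R$ for a $t$-independent kernel $R$ of rank close to $k-c$. This inequality is false, and the error sits exactly in the second-order term you need to be sharp.

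The block manipulations give you either the full $m$ or a $(t-1)$-st bidiagonal copy, but not both. Subtracting consecutive block rows of $A_{\Gamma(B,t)}+I$ yields $t-1$ rows of the form $(-L^{\mathsf T},L)$, where $L=A_{D_B}-I$. However, the leftover row $[A^{\mathsf T},\dots,A^{\mathsf T},I]$ need not add $m$ to their rank. Alternatively, a Schur complement against the first $I_m$ secures $m$. But then every remaining block is shifted by $-A^{\mathsf T}A$, and only $t-2$ consecutive differences reduce to $(-L^{\mathsf T},L)$.

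What is true is $\rho_{\mathbb K}\ge m+(t-2)r$ with $r=\rk_{\mathbb K}(A_{D_B}-I)$ (Theorem~\ref{thm:rank-comparison}), and the lost layer is genuine:
\begin{itemize}
\item At $t=2$ the rank is $m+\rk(I-A^{\mathsf T}A)$. This equals $m$ for even cycles over every field, and for the $\operatorname{PG}(2,q)$ base over $\F_5$, where $P_L^2=P_R^2=I$ and hence $A^{\mathsf T}A=I$. So your inequality already forces $\rk R=0$ on these bases.
\item In the extremal sequence the base rank drops only by one: $r=2v-2$, because $m-k\equiv0$ in $\F_5$. The real saving is that the matrices $F_t=R_z^t-R_z^2$ of all $v-1$ mixed pairs vanish for $t\equiv2\pmod 6$. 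Thus $\rho=m+(t-2)(2v-2)$, which saturates $m+(t-2)r$.
\item Quantitatively, put $m=yN^{3/4}$ and use $r\ge2m^{2/3}-\frac23m^{1/3}-O(1)$. The term $-2r$ contributes $4y^{2/3}$ to the second-order coefficient $g(y)=4y^{2/3}+\frac23y^{-2/3}$, and $g(a_*)=5\sqrt{2/3}=D_*$. With $t-1$ in place of $t-2$ you would obtain $3\sqrt{2/3}$. That is a ``lower bound'' the projective sequence violates.
\end{itemize}

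\textbf{Other repairs needed for second order.}
\begin{itemize}
\item \emph{Base rank.} Instead of a spanning-forest minor with an $O(\sqrt k)$ defect, use the exact field-uniform formula $r=k-c-z_{\mathbb K}(B)$ of Theorem~\ref{thm:baseexact}. It gives $r_i\ge k_i-2$ on each component. An unquantified $O(\sqrt k)$ loss, multiplied by $t\asymp N^{1/4}$, is already of order $N^{1/2}$.
\item \emph{Components.} You cannot drop $c$ in $m+(N/m-1)(k-O(\sqrt k))$, since $c$ may be comparable to $k$. Instead bound $m$ by $r$ componentwise, $m_i\le\Phi(r_i+2)$ with $\Phi(x)=x(1+\sqrt{2x-3})/4$, and sum using the monotonicity of $\Phi(x+2)/x$.
\item \emph{Extremal count.} Cauchy--Schwarz on codegrees does not apply directly to admissible bases with many four-cycles, such as $K_{2,r}$. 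Use Lemma~\ref{lem:transitive} to delete vertices of degree at most two; this leaves a $C_4$-free core at a cost of at most $2(k-k')$ edges. Then apply Reiman in the total-vertex form $\frac k4(1+\sqrt{2k-3})$; your $\sqrt{4k-3}$ belongs to the part-size form. Its linear term produces the $-\frac23m^{1/3}$ correction, and hence the $\frac23y^{-2/3}$ part of $D_*$.
\item \emph{Localization.} Before expanding, you need the a priori estimates $m\asymp N^{3/4}$ and $t\asymp N^{1/4}$, obtained from $\rho\ge m$ and $\rho\ge(t-1)r$. Only then does the large-$m$ inverse estimate apply uniformly.
\end{itemize}
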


The global bounds remain $\Omega(\sqrt{n\log n})\le\nu_2(n)\le
O(n^{3/4})$, by the classical bound~\cite{AKS} and
Theorem~\ref{thm:main}.
In Section~2, we introduce ordered incidence lifts and establish the
rank and independence tools used throughout. In Section~3, we analyze
biregular incidence bases and determine the affine ranks. In Section~4,
we prove the lower bounds and stability results for complete lifts.
In Section~5, we study the Lov\'asz number, Hamilton cycles and
related graph parameters.

\section{Directed incidence graphs and ordered lifts}

For a graph $G$ and vertex $v$, let $N_G(v)$ be its neighborhood
and $d_G(v)=|N_G(v)|$ its degree. Write $\delta(G)$ and $\Delta(G)$
for the minimum and maximum degrees, and $c(G)$ for its number of
connected components. For $S\subseteq V(G)$, write $G[S]$ for the induced
subgraph and $G-S=G[V(G)\setminus S]$. We use $C_r$ for a cycle of
length $r$ and $K_{a,b}$ for a complete bipartite graph with part sizes
$a,b$. Let $B=(L\cup R,E)$ be a
simple bipartite graph with $|E|=m$. Every edge is written as
$(u,v)$, where $u\in L$ and $v\in R$.
Digraphs have at most one arc in each direction between two vertices.
Recall the directed flag graph $D_B$ from the introduction.
For two flags $e=(u,v)$ and $f=(u',v')$ with $u\ne u'$ and
$v\ne v'$, the original edges of $B$ are $uv$ and $u'v'$.
The arc $e\to f$ is present precisely when the cross edge $uv'$ also
belongs to $E$. The reverse arc $f\to e$ would require the other
cross edge $u'v$. Thus opposite arcs occur exactly when the four
edges $uv$, $u'v'$, $uv'$, and $u'v$ form a four-cycle in $B$.

\Needspace{9\baselineskip}
\begin{lemma}[The forbidden directed pattern]\label{lem:transitive}
The graph $D_B$ has no opposite arcs if and only if $B$ is
$C_4$-free. It has no transitive directed triangle if and only if
no four-cycle of $B$ has an edge whose two endpoints both have
degree at least three in $B$. In particular, a $C_4$-free base
satisfies both conditions.
\end{lemma}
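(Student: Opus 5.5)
The first equivalence is essentially the observation recorded just before the statement, so I will only make it formal. Suppose $e=(u,v)\to f=(u',v')$ and $f\to e$ both hold. Then $u\ne u'$, $v\ne v'$ and $uv',u'v\in E$. Together with $uv,u'v'\in E$, the vertices $u,v',u',v$ span a four-cycle. Conversely, take any four-cycle $u\,v'\,u'\,v$ of $B$ with $u,u'\in L$ and $v,v'\in R$. The flags $(u,v)$ and $(u',v')$ then have distinct endpoints, and each cross edge is present, so they carry opposite arcs. This settles the first claim.

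For the second equivalence, I would unpack a transitive directed triangle $e\to f$, $f\to g$, $e\to g$ with $e=(u_1,v_1)$, $f=(u_2,v_2)$ and $g=(u_3,v_3)$.
\begin{itemize}
\item The three arcs force $u_1,u_2,u_3$ to be pairwise distinct and $v_1,v_2,v_3$ to be pairwise distinct.
\item They also force the cross edges $u_1v_2$, $u_2v_3$ and $u_1v_3$ to lie in $E$, in addition to the flag edges $u_iv_i$.
\item Hence $u_1\,v_2\,u_2\,v_3$ is a four-cycle, using the edges $u_1v_2$, $u_2v_2$, $u_2v_3$ and $u_1v_3$.
\item On this cycle, the edge $u_1v_3$ has $u_1$ adjacent to the three distinct vertices $v_1,v_2,v_3$.
\item Likewise $v_3$ is adjacent to the three distinct vertices $u_1,u_2,u_3$.
\end{itemize}
So both endpoints of $u_1v_3$ have degree at least three, which proves the forward direction.

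Conversely, suppose some four-cycle contains an edge whose endpoints both have degree at least three. Label the cycle $u_1\,v_2\,u_2\,v_3$ so that $u_1v_3$ is that edge. Since $d_B(u_1)\ge3$, pick a neighbor $v_1$ of $u_1$ outside $\{v_2,v_3\}$. Since $d_B(v_3)\ge3$, pick a neighbor $u_3$ of $v_3$ outside $\{u_1,u_2\}$. Set $e=(u_1,v_1)$, $f=(u_2,v_2)$ and $g=(u_3,v_3)$. The indices are then pairwise distinct on each side, and the required cross edges $u_1v_2$, $u_2v_3$ and $u_1v_3$ are exactly cycle edges. Hence $e\to f\to g$ and $e\to g$ form a transitive directed triangle.

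The only step needing care is this labeling in the converse. One must check that the chosen special edge plays the role of the ``long'' arc $e\to g$, which is what dictates which endpoint supplies $v_1$ and which supplies $u_3$. The final sentence of the lemma then follows at once: a $C_4$-free base has no four-cycle at all, so both conditions hold vacuously.
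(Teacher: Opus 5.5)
Your proof is correct and follows essentially the same route as the paper. You read off opposite arcs from the cross-edge description, extract the four-cycle $u_1v_2u_2v_3$ with special edge $u_1v_3$ from a transitive triangle, and conversely choose $v_1\notin\{v_2,v_3\}$ and $u_3\notin\{u_1,u_2\}$ to build the triangle on $(u_1,v_1),(u_2,v_2),(u_3,v_3)$, with your explicit distinctness and cross-edge checks filling in what the paper leaves to the reader.
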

\begin{proof}
The first equivalence follows from the cross-edge description
above.

Suppose the flags $e_i=(u_i,v_i)$ give the arcs
$e_1\to e_2$, $e_1\to e_3$ and $e_2\to e_3$.
The vertices $u_1,u_2,u_3$ are distinct, as are $v_1,v_2,v_3$.
The vertices $u_1,u_2,v_2,v_3$ form a four-cycle, and the adjacent
vertices $u_1,v_3$ have the additional neighbors $v_1,u_3$.
Conversely, label a four-cycle by $u_1,u_2,v_2,v_3$ so that
the adjacent vertices $u_1,v_3$ both have degree at least three.
Choose a neighbor $v_1$ of $u_1$ outside $\{v_2,v_3\}$ and
a neighbor $u_3$ of $v_3$ outside $\{u_1,u_2\}$.
The flags $(u_1,v_1),(u_2,v_2),(u_3,v_3)$ form the required
transitive directed triangle.
\end{proof}

We write the factorization of Codenotti et al.\ \cite{CPR} in
endpoint coordinates and record its common row and column space.

For a matrix $M$, write $\operatorname{col}M$ for its column space
and $\ker M$ for its kernel.
For invertible $M$, put $M^{-\mathsf T}=(M^{-1})^{\mathsf T}$.
We write $\operatorname{span}$ for linear span and $W^\perp$ for the
orthogonal complement of a real subspace $W$. Let $A$ be the $m\times m$ adjacency matrix of $D_B$.
Let $N$ be the $|L|\times|R|$ bipartite incidence matrix of $B$.
Let $X$ and $Y$ be the $m\times|L|$ and $m\times|R|$ matrices
recording, respectively, the left and right endpoints of every
edge. Thus each row of $X$, and each row of $Y$, contains exactly
one entry equal to one. Put $P_L=XX^{\mathsf T}-I_m$ and
$P_R=YY^{\mathsf T}-I_m$.

\begin{lemma}[The endpoint factorization]
\label{lem:factor}
Put $C=[X\ Y]$, $W=\operatorname{col}C$, and
$Q=\left(\begin{smallmatrix}-I&N\\0&-I\end{smallmatrix}\right)$.
Over every field,
\[
 A=P_LP_R,\qquad
 A-I_m=CQC^{\mathsf T}
      =XNY^{\mathsf T}-XX^{\mathsf T}-YY^{\mathsf T}.
\]
Both $A-I_m$ and $A^{\mathsf T}-I_m$ have their columns in $W$
and vanish on $\ker C^{\mathsf T}$. After discarding isolated
vertices, if $B$ has $k$ vertices and $c$ components, then
$\dim W=k-c$. Over $\mathbb R$, $\ker C^{\mathsf T}=W^\perp$.
\end{lemma}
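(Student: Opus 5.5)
The plan is a direct entrywise computation of each factorization, followed by a rank computation for $C$. All entries are integers, and the identities will hold over $\mathbb Z$, so they hold over every field.

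First, $A=P_LP_R$. For flags $e=(u,v)$ and $f=(u',v')$, the entry $(XX^{\mathsf T})_{ef}$ equals $1$ exactly when $u=u'$. Hence $(P_L)_{ef}=[u=u',\,e\ne f]$, and likewise $(P_R)_{ef}=[v=v',\,e\ne f]$. The entry $(P_LP_R)_{ef}$ counts flags $g=(u,v')$ with $g\ne e$ and $g\ne f$. Such a $g$ is unique if it exists, and it exists precisely when $uv'\in E$. If $u=u'$, then $g$ would be $f$; if $v=v'$, then $g$ would be $e$. So the entry is $1$ exactly when $u\ne u'$, $v\ne v'$ and $uv'\in E$, which is the arc condition of $D_B$.

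Second, the identity for $A-I_m$. Expanding the block product gives $CQC^{\mathsf T}=XNY^{\mathsf T}-XX^{\mathsf T}-YY^{\mathsf T}$ immediately. Now check $(XNY^{\mathsf T})_{ef}=N_{uv'}=[uv'\in E]$. Split into cases:
\begin{itemize}
\item If $u\ne u'$ and $v\ne v'$, the right-hand side is $[uv'\in E]=A_{ef}$.
\item If $e=f$, the entry is $1-1-1=-1$.
\item If $u=u'$ and $v\ne v'$, then $uv'=f\in E$, so the entry is $1-1-0=0$.
\item If $v=v'$ and $u\ne u'$, then $uv'=e\in E$, so the entry is $1-0-1=0$.
\end{itemize}
This matches $A-I_m$ in every case. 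Transposing gives $A^{\mathsf T}-I_m=CQ^{\mathsf T}C^{\mathsf T}$. Therefore both $A-I_m$ and $A^{\mathsf T}-I_m$ have columns in $\operatorname{col}C=W$ and are annihilated by $\ker C^{\mathsf T}$.

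Third, $\dim W=\rk C=k-c$. Columns of $C$ belonging to isolated vertices are zero, so drop them. Then $C^{\mathsf T}$ is the $k\times m$ unsigned vertex--edge incidence matrix of the bipartite graph $B$. Its kernel is easier to handle through the left kernel: a vector $z\in\mathbb K^{L\cup R}$ satisfies $z^{\mathsf T}C^{\mathsf T}=0$ exactly when $z_u+z_v=0$ for every edge $uv$. The main point is to avoid characteristic-$2$ worries. Since $B$ is bipartite, the substitution $z_w\mapsto(-1)^{[w\in R]}z_w$ turns this condition into $z_u=z_v$ along edges. So $z$ is constant on each component, with signs alternating by side, and these vectors form a space of dimension exactly $c$ over any field. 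Hence $\rk C=k-c$.

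Finally, over $\mathbb R$ we have $\ker C^{\mathsf T}=(\operatorname{col}C)^\perp=W^\perp$, which is the standard fact about column spaces and kernels of transposes.
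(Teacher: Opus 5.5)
Your proof is correct and follows essentially the paper's argument: you identify the entries of $P_LP_R$ with the arc relation, expand $CQC^{\mathsf T}$ blockwise, and compute $\rk C$ from the condition $z_u+z_v=0$ along edges, which leaves one free scalar per component. The only difference is cosmetic: the paper obtains $A-I_m=XNY^{\mathsf T}-XX^{\mathsf T}-YY^{\mathsf T}$ by expanding $(XX^{\mathsf T}-I)(YY^{\mathsf T}-I)$ with $X^{\mathsf T}Y=N$, which makes your four-case entrywise check unnecessary once $A=P_LP_R$ is established.
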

\begin{proof}
The $(e,f)$ entry of $P_LP_R$ counts the cross edge $uv'$ when
it is distinct from both flags, precisely the condition for $e\to f$.
Expanding with $X^{\mathsf T}Y=N$ gives the factorization and the
common-space assertions. Finally, for vectors $a$ on $L$ and $b$ on $R$,
$C(a,b)^{\mathsf T}=0$ means $a_u+b_v=0$ on every edge. In each connected component, all left
coordinates therefore equal one scalar and all right coordinates
its negative. There is one free scalar per component over every
field, giving rank $k-c$.
\end{proof}

The real shifted rank is also related to non-backtracking spectra:
for the two orientations of the edges of $B$, the non-backtracking
matrix is $T_B=\left(\begin{smallmatrix}0&P_R\\P_L&0\end{smallmatrix}\right)$,
and $x\mapsto(P_Rx,x)$ identifies $\ker(A-I)$ with
$\ker(T_B-I)$. See Torres~\cite[Section~2]{Torres} for the
unit eigenvalue. We retain a short joint proof because we also need
the common column space and the positive-characteristic cases.

\Needspace{9\baselineskip}
\begin{theorem}[Two exact ranks for an incidence base]\label{thm:baseexact}
Let $B$ be a nonempty simple bipartite graph without isolated vertices,
with components $B_1,\ldots,B_c$. Put $k_i=|V(B_i)|$,
$m_i=|E(B_i)|$, $k=\sum_i k_i$, $m=\sum_i m_i$, and $A=A_{D_B}$.
For a field $\mathbb K$, let $z_{\mathbb K}(B)$ count the components
for which the integer $m_i-k_i$ becomes zero in $\mathbb K$.
All ranks and dimensions are ordinary nonnegative integers;
only this zero test and the following edge condition are taken
inside the coefficient field. Let $\eta_{\mathbb K}(B)$ count
those components that also satisfy
\[
 (d_B(u)-1)(d_B(v)-1)=1\quad\text{in }\mathbb K
 \quad\text{for every }uv\in E(B_i).
\]
Then
\begin{equation}\label{eq:baseexact}
 \begin{aligned}
 \operatorname{rank}_{\mathbb K}(A-I_m)
   &=k-c-z_{\mathbb K}(B),\\
 \operatorname{rank}_{\mathbb K}[A-I_m\ \ A^{\mathsf T}-I_m]
   &=k-c-\eta_{\mathbb K}(B).
 \end{aligned}
\end{equation}
In characteristic zero, $z_{\mathbb K}(B)$ counts the components
with exactly one cycle, and $\eta_{\mathbb K}(B)$ counts the
components that are cycles. No restriction on four-cycles is needed.
\end{theorem}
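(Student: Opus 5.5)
The plan is to reduce both ranks to linear algebra inside the endpoint space $\mathbb K^{L\cup R}$, using the factorization $A-I_m=CQC^{\mathsf T}$ from Lemma~\ref{lem:factor}. Put $M=A-I_m$, $U=\operatorname{im}C^{\mathsf T}$ and $Z=\ker C$. By Lemma~\ref{lem:factor}, $Z$ is spanned by the component vectors $z_i=(\mathbf 1_{L_i},-\mathbf 1_{R_i})$, so $\dim Z=c$. Over every field $U=Z^{\perp}$ for the standard bilinear pairing, and therefore $\dim U=k-c$.

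\emph{Kernel description.} We have $Mx=0$ iff $QC^{\mathsf T}x\in Z$, i.e. iff $C^{\mathsf T}x\in Q^{-1}Z$. Likewise $M^{\mathsf T}x=CQ^{\mathsf T}C^{\mathsf T}x=0$ iff $C^{\mathsf T}x\in Q^{-\mathsf T}Z$. Since $C^{\mathsf T}x$ ranges over $U$ with fibres of dimension $\dim\ker C^{\mathsf T}=m-(k-c)$, this gives
\[
 \operatorname{rank}M=k-c-\dim(U\cap Q^{-1}Z),\qquad
 \operatorname{rank}[M\ \ M^{\mathsf T}]=k-c-\dim(U\cap Q^{-1}Z\cap Q^{-\mathsf T}Z).
\]
The second formula uses that the column rank of $[M\ M^{\mathsf T}]$ equals $m-\dim(\ker M^{\mathsf T}\cap\ker M)$.

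\emph{Explicit vectors.} We have $Q^{-1}=\left(\begin{smallmatrix}-I&-N\\0&-I\end{smallmatrix}\right)$ and $Q^{-\mathsf T}=\left(\begin{smallmatrix}-I&0\\-N^{\mathsf T}&-I\end{smallmatrix}\right)$. Hence
\[
 Q^{-1}z_i=\bigl((d(u)-1)_{u\in L_i},\mathbf 1_{R_i}\bigr),\qquad
 Q^{-\mathsf T}z_i=-\bigl(\mathbf 1_{L_i},(d(v)-1)_{v\in R_i}\bigr).
\]
Both vectors are supported on component $i$. Pairing $Q^{-1}z_i$ with $z_j$ gives $0$ for $j\ne i$. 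For $j=i$ it gives $\sum_{L_i}(d(u)-1)-|R_i|=m_i-k_i$. So the Gram matrix of $\{Q^{-1}z_i\}$ against $\{z_j\}$ is diagonal with entries $m_i-k_i$. A combination $\sum c_iQ^{-1}z_i$ lies in $U=Z^\perp$ iff $c_i(m_i-k_i)=0$ for all $i$. This yields $\dim(U\cap Q^{-1}Z)=z_{\mathbb K}(B)$, which is the first identity.

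\emph{Second identity.} By disjointness of supports, the intersection $Q^{-1}Z\cap Q^{-\mathsf T}Z$ splits over components. Within component $i$ we need $c\,Q^{-1}z_i=c'\,Q^{-\mathsf T}z_i$ with $c\ne0$. This forces $c'=-c(d(v)-1)$ for all $v\in R_i$ and $c(d(u)-1)=-c'$ for all $u\in L_i$. Using connectivity, this happens exactly when $d(u)-1=\lambda$ is constant on $L_i$ and $d(v)-1=\lambda^{-1}$ on $R_i$. That is equivalent to the edge condition $(d(u)-1)(d(v)-1)=1$ on every edge of $B_i$: the product condition along edges propagates constancy through the connected component.

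It then remains to show that the edge condition forces membership in $U$, i.e. $m_i-k_i=0$ in $\mathbb K$. Counting edges from each side gives $m_i=|L_i|(1+\lambda)=|R_i|(1+\lambda^{-1})$, and $m_i-k_i=\lambda|L_i|-|R_i|$. When $1+\lambda\ne0$ this yields $|R_i|=\lambda|L_i|$, so $m_i-k_i=0$ and the component is counted by $\eta_{\mathbb K}$.

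\emph{Main obstacle.} The degenerate case $\lambda=-1$ (all degrees $\equiv0$ in $\mathbb K$) is where I expect the difficulty. There I will have to check directly whether $m_i\equiv k_i$ follows, or else re-examine whether a vector from $Q^{-\mathsf T}Z$ rather than $Q^{-1}Z$ supplies the needed element of $U$. The direction ``$m_i\equiv k_i$ must hold'' is the one I would test first on $K_{p,p}$-type examples.

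\emph{Characteristic zero.} Here $m_i=k_i$ holds iff $B_i$ is unicyclic. The edge condition with positive integer degrees and $\lambda,\lambda^{-1}\ge0$ forces $\lambda=1$, i.e. $B_i$ is $2$-regular, hence a cycle.
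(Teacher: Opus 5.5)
Your reduction is the paper's own. It uses $A-I_m=CQC^{\mathsf T}$, the fact that $\ker C$ is spanned by the component vectors, the vectors $Q^{-1}z_i$ and $Q^{-\mathsf T}z_i$, the pairing $z_i^{\mathsf T}Q^{-1}z_i=m_i-k_i$, and proportionality of the two vectors as the edge condition. Handling all components at once through a diagonal pairing matrix is only cosmetic. Your kernel formulas are correct, and the first identity is fully proved.

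The gap is the step you leave open as the ``main obstacle'': you set out to show that the edge condition forces $m_i-k_i=0$ in $\mathbb K$. That is not needed. In the statement, $\eta_{\mathbb K}(B)$ counts the components already counted by $z_{\mathbb K}(B)$ that \emph{also} satisfy the edge condition. Your splitting argument shows that $U\cap Q^{-1}Z\cap Q^{-\mathsf T}Z$ has one dimension for each component with $\operatorname{span}\{Q^{-1}z_i\}=\operatorname{span}\{Q^{-\mathsf T}z_i\}$ and $Q^{-1}z_i\perp z_i$. These are exactly the components with the edge condition and $m_i-k_i=0$ in $\mathbb K$. That number is $\eta_{\mathbb K}(B)$ by definition, so your proof ends there. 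The paper argues the same way: if $m_i-k_i\ne0$, the block of $A-I_m$ for that component already has rank $k_i-1$, the full dimension of its part of the endpoint space. The concatenation therefore gains nothing whatever the degrees, and proportionality matters only when $m_i-k_i=0$.

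Moreover, the implication you planned to prove is false in your degenerate case $\lambda=-1$, so that route cannot be completed. Tests on $K_{p,p}$ would mislead you, since there $k=2p\equiv0$ anyway. Over $\mathbb F_2$, take two $4$-cycles sharing a vertex. All degrees are even, so $(d(u)-1)(d(v)-1)=1$ on every edge, yet $m-k=8-7=1\ne0$. Your first identity gives $\operatorname{rank}(A-I_m)=k-c=6$, which is already the dimension of the common column space. The concatenation therefore also has rank $6$, whereas counting the edge condition alone would predict $5$. Two copies of $K_{3,3}$ sharing a vertex behave the same way over $\mathbb F_3$, with $m-k=7$.

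Two minor points. First, equating right blocks gives $c=-c'(d(v)-1)$, not $c'=-c(d(v)-1)$, and only the former yields $d(v)-1=\lambda^{-1}$. Second, in characteristic zero, $\lambda=1$ follows because $d(u)-1$ and $d(v)-1$ are nonnegative \emph{integers} with product one. You should also note that a cycle satisfies $m_i=k_i$ and is therefore counted by $z_{\mathbb K}(B)$.
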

\begin{proof}
We work on one connected component, with $m$ edges and $k$ vertices,
and then sum the ranks. Use $C,Q$ from Lemma~\ref{lem:factor} and put
$h=(\mathbf1_L^{\mathsf T},-\mathbf1_R^{\mathsf T})^{\mathsf T}$.
By that lemma,
$A-I=CQC^{\mathsf T}$, $\ker C=\operatorname{span}\{h\}$,
and $C$ has rank $k-1$. In particular,
$\operatorname{col}C^{\mathsf T}=V:=\{y:h^{\mathsf T}y=0\}$.
As $C^{\mathsf T}$ maps onto $V$ and $\ker C=\langle h\rangle$,
we have
\[
 \ker(CQC^{\mathsf T})=(C^{\mathsf T})^{-1}
       (V\cap Q^{-1}\langle h\rangle).
\]
Consequently its rank is $k-1$, except when
$Q^{-1}h\in V$, when it drops to $k-2$. Write
\[
 u=Q^{-1}h=
 \begin{pmatrix}(d_B(x)-1)_{x\in L}\\\mathbf1_R\end{pmatrix},
 \qquad
 v=Q^{-\mathsf T}h=
 \begin{pmatrix}-\mathbf1_L\\(1-d_B(y))_{y\in R}\end{pmatrix}.
\]
Since $h^{\mathsf T}u=h^{\mathsf T}v=m-k$, the exceptional
condition is precisely $m-k=0$ in $\mathbb K$. This proves
the first rank formula on a connected component. If $m-k\ne0$,
the common column space is already the whole endpoint space.

Suppose $m-k=0$ in $\mathbb K$, so $u,v\in V$. The common kernel is
\[
 \ker(A-I)\cap\ker(A^{\mathsf T}-I)
 =(C^{\mathsf T})^{-1}
   \bigl(\operatorname{span}\{u\}\cap\operatorname{span}\{v\}\bigr).
\]
Since $C^{\mathsf T}$ maps onto $V$ and $\dim\ker C^{\mathsf T}=m-k+1$,
the preimage has this dimension plus
$\dim(\operatorname{span}\{u\}\cap\operatorname{span}\{v\})$.
The latter is at most one, since $u$ and $v$ are nonzero
(their right and left blocks, respectively, contain unit entries).
It is one exactly when $u,v$ are proportional. This requires all
left values $d_B(x)-1$ to be one common nonzero scalar and all
right values $d_B(y)-1$ to be its reciprocal. This is equivalent
to the edge condition in the statement: connectivity propagates
the two reciprocal values through the two parts.
Transpose the horizontal concatenation and apply rank--nullity.
Its rank is therefore $k-2$ when that condition holds and $k-1$
otherwise. Summing over the components proves \eqref{eq:baseexact}.

In characteristic zero, a connected graph has $m=k$ exactly when
it has one cycle. The edge condition forces every degree to be
two, so the connected graph is then a cycle. This proves the
last assertions.
\end{proof}

Digraphs may contain directed cycles; \emph{loopless} only excludes
arcs from a vertex to itself. A digraph is strongly connected when every
vertex can reach every other by a directed path; its maximal such
subdigraphs are its strongly connected components. For a loopless digraph $D$, let $F(D)$ be its underlying simple
graph: $uv$ is an edge when at least one of $u\to v$ and $v\to u$
is an arc. For $t\ge1$, the \emph{complete ordered lift} $H_t(D)$
has vertex set $V(D)\times[t]$, where $[t]=\{1,\ldots,t\}$.
Its layers are independent, and $(u,i)$ is adjacent to $(v,j)$ for
$i<j$ exactly when $u\to v$. Let $P_t$ be the strictly upper triangular all-ones matrix of order $t$,
and let $\otimes$ denote the Kronecker product. In layer order,
\[
 A_{H_t(D)}=P_t\otimes A_D+P_t^{\mathsf T}\otimes A_D^{\mathsf T}.
\]
For an incidence base, write $\Gamma(B,t)=H_t(D_B)$.

\Needspace{10\baselineskip}
\begin{lemma}[Rank and triangles in an ordered lift]\label{lem:lift}
Let $D$ be a loopless digraph on $m\ge1$ vertices. Over any field
$\mathbb K$, let $W\subseteq\mathbb K^m$ contain the column spaces of $A_D-I_m$
and $A_D^{\mathsf T}-I_m$, with $\dim W=s$. Then, for every
integer $t\ge1$,
\[
 \rho_{\mathbb K}(H_t(D))\le m+(t-1)s.
\]
For $t\ge3$, the lift is triangle-free if and only if $D$ has
no transitive directed triangle. For $t\le2$, it is always
triangle-free.
\end{lemma}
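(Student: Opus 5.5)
We handle the rank bound by a block decomposition of $A_{H_t(D)}+I$, and the triangle statement by a short case analysis on the layers of a would-be triangle.

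\emph{Rank bound.} Write $A=A_D$ and $M=A_{H_t(D)}+I_{mt}$. For $i<j$ the $(i,j)$ block of $M$ is $A$, for $i>j$ it is $A^{\mathsf T}$, and every diagonal block is $I_m$. We compare $M$ with the matrix $M_0$ whose every block in position $(i,j)$ with $i<j$ equals $I_m$, whose every block with $i>j$ equals $I_m$, and whose diagonal blocks equal $I_m$. Thus $M_0=J_t\otimes I_m$, which has rank $m$. The difference is
\[
 M-M_0=P_t\otimes(A-I_m)+P_t^{\mathsf T}\otimes(A^{\mathsf T}-I_m).
\]
Every column of this difference lies in $\mathbb K^t\otimes W$, but that only gives the bound $m+ts$. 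To save one copy of $W$ we refine the count. The first block row of $M-M_0$ is $(0,A-I,\dots,A-I)$, and its last block column is $(A-I,\dots,A-I,0)^{\mathsf T}$. More useful is to perform block row operations first: subtract block row $i+1$ from block row $i$ for $i=1,\dots,t-1$, which leaves the rank unchanged. Row $i$ of the new matrix then has the entries:
\begin{itemize}
\item in column $j<i$: $A^{\mathsf T}-A^{\mathsf T}=0$;
\item in column $i$: $I-A^{\mathsf T}$;
\item in column $i+1$: $A-I$;
\item in columns $j>i+1$: $A-A=0$.
\end{itemize}
So block rows $1,\dots,t-1$ are bidiagonal, with blocks $-(A^{\mathsf T}-I)$ and $A-I$. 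Their row spaces lie in $W^{\mathsf T}$-type spaces. Concretely, each of these rows is a row vector of the form $(0,\dots,0,-x,y,0,\dots)$, where the row space of $A^{\mathsf T}-I$ is the column space of $A-I$ transposed. Hence the row space of the bidiagonal block row $i$ lies in $e_i\otimes R_1+e_{i+1}\otimes R_2$, where $R_1=\operatorname{row}(A^{\mathsf T}-I)$ and $R_2=\operatorname{row}(A-I)$. This is cleaner if we instead apply the same differencing to the columns as well, i.e.\ conjugate $M$ by the invertible bidiagonal matrix $S\otimes I_m$ (with $S$ the difference operator). The resulting matrix has a bordered structure: a block in position $(t,t)$ of size $m$, plus blocks whose nonzero entries in the first $t-1$ block rows and columns are of the form $\pm(A-I)$ and $\pm(A^{\mathsf T}-I)$, or of the form $(A-I)(\cdot)$.

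We then bound the rank by (rank of the last block row) $+$ (rank of the remaining rows). The last block row has $m$ rows, giving at most $m$. Each of the other $t-1$ block rows equals $(\text{row}\ i\text{ of }M)-(\text{row}\ i{+}1)$, whose $(i,j)$ entries are as computed above. Transposing this reasoning, its columns lie in $W$: both $I-A^{\mathsf T}$ and $A-I$ have column space in $W$. So each block row, viewed as an $m\times mt$ matrix, has column space in $W$ and hence rank at most $s$. The total is at most $m+(t-1)s$. This differencing step is the key point, and it is simple once set up: it is really a rank-subadditivity statement over block rows.

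\emph{Triangles.} Take a triangle in $H_t(D)$. Its three vertices lie in distinct layers, since layers are independent. Order them by layer, $(x,i),(y,j),(z,l)$ with $i<j<l$. Adjacency between layers forces the arcs $x\to y$, $y\to z$ and $x\to z$ in $D$. Since $D$ is loopless, these arcs force $x\neq y$, $y\ne z$ and $x\ne z$. So $x,y,z$ form a transitive directed triangle. When $t\le2$, three distinct layers are unavailable, so the lift is always triangle-free. Conversely, for $t\ge3$, a transitive triangle $x\to y\to z$, $x\to z$ in $D$ yields the triangle $(x,1),(y,2),(z,3)$ in the lift.

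The only delicate point is the rank count. I expect the main obstacle to be confirming that the single undifferenced block row contributes at most $m$, rather than something larger, and that the differencing is invertible. Both are immediate: the operator $S$ is unitriangular, and a block row has only $m$ rows.
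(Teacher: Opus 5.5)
Your proof is correct, and your triangle argument is the same as the paper's. The rank bound is reached by a different presentation of the same count.

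\textbf{How the paper argues.} It works directly with column spaces. It writes $A_{H_t(D)}+I_{mt}=J_t\otimes I_m+P_t\otimes L+P_t^{\mathsf T}\otimes L^{\mathsf T}$ with $L=A_D-I_m$. Every column then lies in $U+W^t$, where $U=\{(y,\ldots,y)\}$. Since $U\cap W^t$ is the diagonal copy of $W$, one gets $\dim(U+W^t)=m+ts-s$.

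\textbf{How yours relates.} You left-multiply by the unitriangular difference operator $S\otimes I_m$ and use subadditivity of rank over block rows. This is the same count in different coordinates. The map $S\otimes I_m$ sends $W^t$ onto itself and $U$ onto $\{0\}^{t-1}\times\mathbb K^m$. It therefore carries $U+W^t$ onto $W^{t-1}\times\mathbb K^m$. That is exactly your statement that the first $t-1$ differenced block rows have their columns in $W$ and the last block row contributes at most $m$.

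\textbf{What each buys.} Your coordinates avoid the intersection computation and make invertibility obvious. They also give a bonus: the same differenced matrix yields the lower bound $m+(t-2)r$ of Theorem~\ref{thm:rank-comparison} directly, without the Schur complement used there. Keep rows $I$ of block rows $1,\ldots,t-2$, columns $J$ of block columns $2,\ldots,t-1$, and the whole last block row and column. This gives a block lower triangular minor with diagonal blocks $L[I,J]$ (repeated $t-2$ times) and $I_m$. The paper's formulation instead names the container $U+W^t$. That fits its later real-spectral decomposition into $W$ and $W^\perp$, where the lift acts on $(W^\perp)^t$ as $J_t\otimes I$.

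\textbf{Presentation.} Cut the middle detour. The row-space containment in $e_i\otimes R_1+e_{i+1}\otimes R_2$ would only give $2s$ per block row. The conjugated ``bordered structure'' is never used. The column-space argument in your final paragraph is the one that actually proves the bound.
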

\begin{proof}
Put $L=A_D-I_m$. Since $P_t+P_t^{\mathsf T}+I_t=J_t$,
\[
 A_{H_t(D)}+I_{mt}
 =J_t\otimes I_m+P_t\otimes L+P_t^{\mathsf T}\otimes L^{\mathsf T}.
\]
The first term has its columns in the diagonal space
$U=\{(y,\ldots,y):y\in\mathbb K^m\}$; the other two have
their columns in $W^t$. As $U\cap W^t$ is the diagonal copy
of $W$, the space $U+W^t$ has dimension $m+st-s$.

A triangle uses three distinct layers. Ordering them increasingly,
its three distinct base vertices form a transitive directed triangle.
Conversely, any such triangle lifts to the first three layers.
\end{proof}

By Lemma~\ref{lem:transitive}, this characterizes all incidence
bases whose complete lifts are triangle-free for every $t$,
including some bases with four-cycles such as $K_{2,r}$.
For incidence bases, Lemma~\ref{lem:factor} gives the bound
$\rho_{\mathbb K}(\Gamma(B,t))\le m+(t-1)(k-c)$ from the introduction.

An independent set can be compressed so that all its intermediate
layers use the same set of base vertices. For incidence bases,
this strengthens the general independence bound.

\begin{proposition}[Endpoint compression and a degree bound]
\label{prop:independence-endpoints}
Let $D$ be a loopless digraph on $m\ge1$ vertices, and let
$a=\alpha(F(D))$. For every integer $t\ge2$,
\begin{equation}\label{eq:independence-endpoints}
 \alpha(H_t(D))=
 \max_{\substack{U,V\subseteq V(D)\\
                   \text{no arc from }U\text{ to }V}}
 \bigl(|U|+|V|+(t-2)|U\cap V|\bigr).
\end{equation}
The sets $U,V$ may intersect. In particular, for all $t\ge1$,
\[\max\{m,ta\}\le\alpha(H_t(D))\le m+(t-1)a.\]
If $D=D_B$ for a simple bipartite graph $B$ of minimum degree
$\delta\ge2$, then
\begin{equation}\label{eq:incidence-independence}
 \alpha(\Gamma(B,t))=m\quad(1\le t\le\delta),\qquad
 \alpha(\Gamma(B,t))\le m+(t-\delta)a\quad(t\ge\delta).
\end{equation}
These statements do not require $B$ to be regular or $C_4$-free.
If $2\le t<\delta$, every maximum independent set contains exactly one
copy of each flag. Its layer map $\ell:E(B)\to[t]$ satisfies
$\ell(e)\ge\ell(f)$ for every arc $e\to f$; conversely, every such
map gives a maximum independent set.
\end{proposition}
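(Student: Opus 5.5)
The plan has four steps: prove the compression formula \eqref{eq:independence-endpoints}, read off the general two-sided bound, then handle incidence bases through the degree condition, which also yields the characterisation of maximum independent sets.

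\emph{Compression formula.} Let $S$ be independent in $H_t(D)$, and write $S_i\subseteq V(D)$ for its layer-$i$ projection. Independence says exactly that there is no arc from $S_i$ to $S_j$ whenever $i<j$. Put $U=\bigcup_{i<t}S_i$ and $V=\bigcup_{j>1}S_j$. These sets need not themselves satisfy ``no arc from $U$ to $V$'', because an arc from $S_i$ to $S_j$ with $i\ge j$ is allowed. So instead of arguing from $U,V$, we optimise over prefixes.

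For each $1\le r<t$, set $U_r=\bigcup_{i\le r}S_i$ and $V_r=\bigcup_{j>r}S_j$. Then there is no arc from $U_r$ to $V_r$. Replace $S$ by the set $S'$ that has $U_r$ in layer $1$, $V_r$ in layer $t$, and $U_r\cap V_r$ in every middle layer. The set $S'$ is independent:
\begin{itemize}
\item an arc from layer $1$ to any later layer goes from $U_r$ into $V_r$, which is excluded;
\item an arc from a middle layer goes from $U_r\cap V_r\subseteq U_r$ into $V_r$, also excluded.
\end{itemize}
Hence the right-hand side of \eqref{eq:independence-endpoints} is achieved, and it is at most $\alpha$. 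The remaining point is to show that some $r$ gives $|S'|\ge|S|$.

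Each base vertex $x$ appears in a set of layers $T_x\subseteq[t]$. Let $a_x=\min T_x$ and $b_x=\max T_x$. Its contribution to $S'$ for a given $r$ is
\[
[a_x\le r]+[b_x>r]+(t-2)[a_x\le r<b_x],
\]
while its contribution to $|S|$ is $|T_x|\le b_x-a_x+1$. Averaging the $S'$ contribution over $r=1,\dots,t-1$ gives exactly
\[
\frac{(t-a_x)+(b_x-1)+(t-2)(b_x-a_x)}{t-1}=b_x-a_x+1
\]
when $a_x<b_x$, and $1$ when $a_x=b_x$. In both cases this is at least $|T_x|$. So the average of $|S'|$ over $r$ is at least $|S|$, and some $r$ works. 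This averaging step is where I expect the main care to be needed.

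\emph{General bounds.} Taking $U=V(D)$ and $V=\emptyset$ gives $\alpha\ge m$. Taking $U=V$ independent in $F(D)$ gives $\alpha\ge ta$. For the upper bound, $U\cap V$ has no arc inside it, so $|U\cap V|\le a$. Then
\[
|U|+|V|+(t-2)|U\cap V|=|U\cup V|+(t-1)|U\cap V|\le m+(t-1)a.
\]
For $t=1$ these bounds are trivial.

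\emph{Incidence bases.} The key lemma is that if $I=U\cap V$ is independent in $F(D_B)$ and no arc goes from $U$ to $V$, then $|U\cup V|\le m-(\delta-1)|I|$. Take a flag $(u,v)\in I$. Since $\deg u\ge\delta$, the vertex $u$ has at least $\delta-1$ other neighbours $v'$. For each such $v'$, and each flag $(u',v')$ with $u'\ne u$, there is an arc $(u,v)\to(u',v')$. Such a flag cannot lie in $V$, since $(u,v)\in U$, so it must lie outside $V$. Symmetrically, flags $(u',v'')$ with $u'v\in E$ and $u'\ne u$ point to $(u,v)$, so they cannot lie in $U$.

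I expect the step from this to the exact count $m-(\delta-1)|I|$ to be the main obstacle. The natural attempt is an injective charging: assign to each flag $x\in I$ at least $\delta-1$ distinct flags outside $U\cup V$, or flags whose removal is forced, and use independence of $I$ to keep the charged sets disjoint. If exact disjointness fails, the fallback is a weighted counting over the stars at left endpoints.

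With the lemma, the objective becomes
\[
|U\cup V|+(t-1)|I|\le m+(t-\delta)|I|.
\]
This is at most $m$ when $t\le\delta$, and at most $m+(t-\delta)a$ when $t\ge\delta$, which gives \eqref{eq:incidence-independence}.

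\emph{Maximum sets for $2\le t<\delta$.} Equality now forces $I=\emptyset$ and $U\cup V=E$. Tracing equality back through the averaging argument forces every $x$ to have $|T_x|\le1$, and then every $x$ to have $|T_x|=1$. Thus a maximum independent set contains exactly one copy of each flag. The layer map $\ell$ must then satisfy $\ell(e)\ge\ell(f)$ for every arc $e\to f$, since otherwise the copies of $e$ and $f$ would be adjacent. Conversely, any such map defines an independent set of size $m$, which is maximum.
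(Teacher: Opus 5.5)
\textbf{There is a genuine gap in the incidence-base part.} Your compression step, the general bounds and the equality tracing for $2\le t<\delta$ all follow the paper. The compression step uses the cuts $U_r=\{x:a_x\le r\}$, $V_r=\{x:b_x>r\}$ and averages over $r$. In the equality tracing, summing over all cuts forces every $U_r\cap V_r$ to be empty.

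The gap is your ``key lemma'' $|U\cup V|\le m-(\delta-1)|U\cap V|$, i.e.\ $|U|+|V|+(\delta-2)|U\cap V|\le m$ for every pair with no arc from $U$ to $V$. All of \eqref{eq:incidence-independence} and the description of maximum independent sets rest on it, and you leave it unproved.

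The charging you suggest also does not go through as described. The flags you attach to a common flag $x=(u,v)$ are each excluded from only one of the two sets:
\begin{itemize}
\item out-neighbours of $x$ avoid $V$ but may lie in $U$ (take $U=\{x,f\}$, $V=\{x\}$ with $x\to f$ and no reverse arc);
\item in-neighbours avoid $U$ but may lie in $V$;
\item the other flags at $u$ may lie in $V$ (take $U=\{(u,v)\}$, $V=\{(u,v),(u,v')\}$, which is admissible).
\end{itemize}
So there is no evident supply of $\delta-1$ flags outside $U\cup V$ for each common flag. The count has to be global, and a count over left stars alone never sees $V$.

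\textbf{The missing idea} is the endpoint factorization $A=P_LP_R$, read over the integers. Let $\xi,\zeta$ be the indicators of $U,V$. Admissibility gives $0=\xi^{\mathsf T}A\zeta=(P_L\xi)^{\mathsf T}(P_R\zeta)$, so these nonnegative vectors have disjoint supports.

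Concretely, let $\mathcal S_L$ be the set of flags $g=(u,v')$ such that $u$ carries a $U$-flag other than $g$. Let $\mathcal S_R$ be the set of flags $g=(u,v')$ such that $v'$ carries a $V$-flag other than $g$. A flag in both sets would give an arc from that $U$-flag to that $V$-flag through the cross edge $g$, so $\mathcal S_L\cap\mathcal S_R=\emptyset$.

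Now count each set:
\begin{itemize}
\item A left vertex with exactly one $U$-flag contributes $d_B(u)-1\ge\delta-1$ flags to $\mathcal S_L$.
\item A left vertex with $j\ge2$ $U$-flags contributes $d_B(u)\ge j$ flags.
\item Hence $|\mathcal S_L|\ge|U|+(\delta-2)n_L$, where $n_L$ counts left vertices with exactly one $U$-flag. Symmetrically, $|\mathcal S_R|\ge|V|+(\delta-2)n_R$.
\end{itemize}
A common flag $uv$ lies in $\mathcal S_L\cap\mathcal S_R$ unless $u$ has a single $U$-flag or $v$ has a single $V$-flag. Each such singleton endpoint serves at most one common flag, so $|U\cap V|\le n_L+n_R$. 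Since $\delta\ge2$, combining this with $|\mathcal S_L|+|\mathcal S_R|\le m$ yields the weighted inequality. The rest of your argument then works.
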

\begin{proof}
For an admissible pair $U,V$ in \eqref{eq:independence-endpoints},
put $U$ in the first layer, $V$ in the last, and $U\cap V$ in
all intervening layers. This is independent and has the stated size.
Conversely, let $I$ be any independent set in $H_t(D)$. For each
represented base vertex $v$, let $f_v,\ell_v$ be its first and last
occupied layers and $n_v$ its number of copies. At a cut
$i\in\{1,\ldots,t-1\}$, put
$U_i=\{v:f_v\le i\}$ and $V_i=\{v:\ell_v>i\}$.
There is no arc from $U_i$ to $V_i$, since such an arc would
join the selected copies $(u,f_u)$ and $(v,\ell_v)$.
Their union is the set $S$ of all represented vertices, and
\[
 \frac1{t-1}\sum_{i=1}^{t-1}
 \bigl(|U_i|+|V_i|+(t-2)|U_i\cap V_i|\bigr)
 =|S|+\sum_{v\in S}(\ell_v-f_v)\ge\sum_{v\in S}n_v.
\]
Some admissible pair therefore has value at least $|I|$, proving
\eqref{eq:independence-endpoints}. Its intersection is independent
in $F(D)$, so writing the objective as
$|U\cup V|+(t-1)|U\cap V|$ proves the general upper bound.
A full layer and all copies of an independent base set give the
lower bounds; $t=1$ is immediate.

\Needspace{7\baselineskip}
Now suppose $D=D_B$, where $B=(L\cup R,E)$ has minimum degree
$\delta\ge2$. Take any admissible $U,V\subseteq E$ and let
$\xi,\zeta$ be their indicator vectors. By the endpoint
factorization, $0=\xi^{\mathsf T}A\zeta
=(P_L\xi)^{\mathsf T}(P_R\zeta)$.
These vectors are viewed over the integers, not the coefficient field.
They are nonnegative, so their supports are disjoint; here
$\operatorname{supp}x$ is the set of indices of its nonzero entries.
Let $n_L$ count the left vertices incident with exactly one flag
of $U$, and $n_R$ the right vertices incident with exactly one
flag of $V$. A left vertex incident with one selected flag
contributes $d_B(u)-1$ to $|\operatorname{supp}(P_L\xi)|$;
if it is incident with at least two, it contributes $d_B(u)$.
Consequently
\[
 |\operatorname{supp}(P_L\xi)|\ge |U|+(\delta-2)n_L,
 \qquad
 |\operatorname{supp}(P_R\zeta)|\ge |V|+(\delta-2)n_R.
\]
Every common flag $uv\in U\cap V$ has either exactly one
$U$-flag at $u$ or exactly one $V$-flag at $v$; otherwise both
support vectors are positive at $uv$. Each such singleton endpoint
accounts for at most one common flag, so $|U\cap V|\le n_L+n_R$.
Disjointness of the supports yields the weighted inequality
\[
 |U|+|V|+(\delta-2)|U\cap V|\le m.
\]
For $2\le t\le\delta$, the objective in
\eqref{eq:independence-endpoints} is at most $m$; a full layer
attains it. For $t\ge\delta$, its excess is at most
$(t-\delta)|U\cap V|\le(t-\delta)a$.
This proves \eqref{eq:incidence-independence}.

For the equality assertion, return to an arbitrary independent set $I$
and its first and last layers $f_v,\ell_v$. Put
$L(I)=\sum_{v\in S}(\ell_v-f_v)$, where $S$ is its represented flag set.
Summing the weighted inequality over its $t-1$ cuts gives
$(t-1)|S|+(\delta-1)L(I)\le(t-1)m$. Hence
\[
 |I|\le |S|+L(I)
 \le m-\frac{\delta-t}{t-1}L(I).
\]
If $|I|=m$ and $t<\delta$, then $L(I)=0$ and $|S|=m$.
Thus every flag occurs once. The arc condition is exactly the
condition that these copies form an independent set, proving both
directions. In particular, the layer map is constant on each strongly
connected component of $D_B$.
\end{proof}

The general bound is attained by a transitive tournament: take a full
first layer and repeat the source in every later layer. The affine
family attains the incidence bound.

A directed Hamilton cycle gives spanning cycles in the lifts.
The following interpolation will also let us prescribe their order.

\begin{lemma}[Hamilton cycles in ordered lifts]\label{lem:hamilton-lift}
If a loopless digraph $D$ on $m\ge3$ vertices has a directed
Hamilton cycle, then $H_t(D)$ is Hamiltonian for every $t\ge3$.
If $m$ is odd, then for every $r\ge3$ and $0\le h\le m$,
$H_{r+2}(D)$ has a Hamiltonian induced subgraph of order $rm+2h$
containing $r$ complete layers.
\end{lemma}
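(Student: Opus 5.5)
The plan is to work with an undirected description of the lift along the Hamilton cycle and then write down explicit spanning cycles. Label the directed Hamilton cycle of $D$ as $v_1\to v_2\to\cdots\to v_m\to v_1$, with indices modulo $m$. I will use only the edges of $H_t(D)$ coming from these $m$ arcs. Then $(v_x,a)$ and $(v_{x+1},b)$ are adjacent whenever $a<b$. So a walk that uses only these edges changes the base index by $\pm1$ at each step. When the index goes up by one the layer must strictly increase, and when it goes down by one the layer must strictly decrease. Every construction below is a cyclic sequence of pairs (index, layer) obeying this rule and visiting each required pair exactly once.

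For $t=3$ I would use the explicit cycle
\[
(x,1),(x+1,2),(x+2,3),(x+1,1),(x+2,2),(x+3,3),\ldots
\]
It consists of $m$ blocks $(j,1),(j+1,2),(j+2,3)$ for $j=1,\ldots,m$. Inside a block both steps go $+1$ in index with increasing layer. Between blocks the step is $(j+2,3)\to(j+1,1)$, which goes $-1$ in index with decreasing layer. The block with $j=m$ closes back to $(1,1)$. Each pair $(x,\ell)$ is visited exactly once, and $m\ge3$ makes this a genuine cycle of length $3m$.

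For general $t\ge3$ I would enlarge the $t=3$ cycle rather than search for a new pattern. The key local gadget is a zigzag between two consecutive base vertices, using a low copy of $v_x$ and a high copy of $v_{x+1}$:
\[
(x,a_1),(x+1,b_1),(x,a_2),(x+1,b_2),\ldots,\qquad a_i<b_i,\ a_{i+1}<b_i .
\]
This absorbs any number of extra copies of $v_x$ and $v_{x+1}$, provided their layers can be interleaved. Concretely, I would assign layers $1,\ldots,t$ so that in each block the extra layers $4,\ldots,t$ are inserted by replacing one edge of the $t=3$ cycle with such a zigzag. One could also argue by induction on $t$, inserting each new copy $(v_x,\ell)$ between two existing copies of $v_{x\pm1}$ whose layers sit on the correct sides of $\ell$. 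I expect this bookkeeping to be the main obstacle. The difficulty is that the new top-layer copies $(v_x,t)$ have neighbours only among copies of $v_{x-1}$ along the cycle, so each one must sit between two copies of $v_{x-1}$ in lower layers, and these must be available and not already committed. I would first try to realise each block as a single longer zigzag between consecutive indices before falling back on induction.

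For the second assertion, take $m$ odd, $r\ge3$ and $0\le h\le m$. I would start from the Hamilton cycle on $r$ complete layers, placing them as layers $2,\ldots,r+1$ of $H_{r+2}(D)$. I would then add $h$ copies in layer $1$ and $h$ in layer $r+2$, pairing them. Each pair would be a detour $(v_{x-1},1),(v_x,r+2)$ spliced in place of one cycle edge between a copy of $v_{x-1}$ and a copy of $v_x$; the layer inequalities hold because layer $1$ is below everything and layer $r+2$ above. The resulting subgraph is induced by construction, has order $rm+2h$, and contains the $r$ complete layers. Oddness of $m$ enters when choosing $h$ pairwise disjoint splicing edges: I would arrange the detours so that consecutive ones do not compete for the same cycle edge, and use odd $m$ to handle the wrap-around when $h$ is close to $m$. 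Verifying this parity step is the second point I expect to require care.
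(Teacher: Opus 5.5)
\textbf{There is a genuine gap: for $t\ge4$ no Hamilton cycle is actually produced.} Your $t=3$ cycle is correct; it is the paper's construction for the layer order $1,2,3$. Beyond that, the zigzag gadget and the induction remain a plan, and this bookkeeping is the whole content of the first assertion.

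The obstruction you identify is real. Consecutive vertices of a cycle built from the arcs $v_x\to v_{x+1}$ are copies of consecutive base vertices. A new top copy $(v_x,t+1)$, however, needs two lower copies of $v_{x-1}$ as neighbours. So new vertices cannot be inserted one at a time; they must come in top--bottom pairs, as in your detour.

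Applying that detour at every index (at the edges described in the next paragraph) turns a Hamilton cycle on $r$ layers into one on $r+2$ layers. Together with $t=3$ this gives every odd $t$. Even $t$ still needs a base case $t=4$, and there the parity of $m$ must be handled. The natural $2$-factor with edges $(x,1)(x+1,2)$, $(x,2)(x+1,3)$, $(x,3)(x+1,4)$, $(x,4)(x-1,1)$ advances the index by $2$ per round, so it splits into two cycles when $m$ is even.

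The paper treats all $t$ at once. It orders the layers cyclically as $\pi_1,\ldots,\pi_t$ and uses the edges $(e_i,\pi_j)(e_{i+\epsilon_j},\pi_{j+1})$, with $\epsilon_j=\pm1$ according to ascent or descent. This $2$-factor has $\gcd(m,\delta)$ cycles, where $\delta=\sum_j\epsilon_j$. The paper chooses transpositions making $\delta=1$ for odd $t$ and $\delta=2$ for even $t$. When $m$ and $t$ are both even, it merges the two resulting cycles by a $2$-switch.

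For the second assertion, your detour is valid if it is traversed as $(v_{x-1},a),(v_x,r+2),(v_{x-1},1),(v_x,b)$, since two copies of one base vertex are never adjacent. It mirrors the paper's construction, which places the $r$ layers at $1,4,\ldots,r+2$ and inserts copies in layers $2,3$. What you have not shown is that every chosen index has a splicing edge, and the parity argument you plan addresses a non-issue.

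Take a Hamilton cycle of the middle layers that uses only the arcs $v_y\to v_{y+1}$. The top copy $(v_x,r+1)$ then has both cycle neighbours among lower copies of $v_{x-1}$. Hence every $x$ has an edge $(v_{x-1},a)(v_x,r+1)$. Replacing distinct edges of a cycle by paths through distinct new vertices always yields a cycle. So no disjointness or wrap-around condition arises, and odd $m$ is not needed in your version. The paper's odd-$m$ hypothesis comes from its choice of the edges $(e_i,1)(e_{i+1},r)$, all of which survive only when no $2$-switch is made.

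Either way, this part relies on Hamilton cycles on $r$ complete layers for every $r\ge3$, so it inherits the gap in the first part.
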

\begin{proof}
Retain a directed Hamilton cycle $e_i\to e_{i+1}$, with indices
modulo $m$. Choose a cyclic ordering $\pi_1,\ldots,\pi_t$ of
$[t]$ and put $\pi_{t+1}=\pi_1$. For each $j$, let
$\epsilon_j=1$ if $\pi_j<\pi_{j+1}$ and $-1$ otherwise.
The edges
\[
 (e_i,\pi_j)(e_{i+\epsilon_j},\pi_{j+1})
 \qquad(0\le i<m,\ 1\le j\le t)
\]
form a spanning union of cycles: the preceding and following layers
are distinct, so every vertex has degree two. One layer tour changes
the base index by $\delta=\sum_j\epsilon_j$. The cycles correspond
to the orbits of $i\mapsto i+\delta$ modulo $m$, so their number is
$\gcd(m,\delta)$.

Start from the cyclic order $(1,2,\ldots,t)$. For odd $t$, swap
the disjoint pairs $(3,4),(5,6),\ldots,(t-2,t-1)$, with no swaps
when $t=3$. There are
$(t-1)/2$ descents, including the last step, so $\delta=1$
and the spanning cycle is already Hamiltonian.
For even $t$, swap $(4,5),(6,7),\ldots,(t-2,t-1)$ instead;
the empty list is allowed when $t=4$. Now $\delta=2$, and the
same conclusion holds if $m$ is odd.

If both $m$ and $t$ are even, there are two cycles. The ordering
starts with $1,2,3$ and ends with $t$. Its edges
$(e_0,1)(e_1,t)$ and $(e_0,2)(e_1,3)$ lie in different cycles.
Indeed, if $s_j=\sum_{h<j}\epsilon_h$, then $i-s_j\pmod2$ is
constant on each cycle. It is zero at $(e_0,1)$ and one at $(e_0,2)$.
Replace these edges by
$(e_0,1)(e_1,3)$ and $(e_0,2)(e_1,t)$.
Both are edges of the ordered lift, and the replacement joins the
two cycles into one Hamilton cycle.

For the interpolation, suppose $m$ is odd. The cycle just constructed
on $r$ layers contains all $m$ edges $(e_i,1)(e_{i+1},r)$.
Embed its layers, in order, as $1,4,5,\ldots,r+2$ in
$H_{r+2}(D)$. For any $h$ distinct indices $i$, replace the
edge $(e_i,1)(e_{i+1},r+2)$ by the path
\[
 (e_i,1),\quad(e_{i+1},3),\quad(e_i,2),\quad(e_{i+1},r+2).
\]
Each of its three edges uses the arc $e_i\to e_{i+1}$.
The inserted vertices are pairwise distinct, and none lies in the
original cycle. The replacements therefore give one cycle on
$rm+2h$ vertices. The subgraph induced by these vertices contains
this cycle and all $r$ original layers.
\end{proof}

From the same blocks, we obtain a lower bound in terms of the base rank.

\Needspace{10\baselineskip}
\begin{theorem}[A sharp twofold comparison for ordered block matrices]
\label{thm:rank-comparison}
Let $A$ be an $m\times m$ matrix over a field $\mathbb K$, where
$m\ge1$, and let $K_t(A)$ have diagonal blocks $I_m$, blocks $A$
above the diagonal, and blocks $A^{\mathsf T}$ below the diagonal.
Put $L=A-I_m$, $r=\operatorname{rank}_{\mathbb K}L$,
$s=\operatorname{rank}_{\mathbb K}[L\ L^{\mathsf T}]$, and
$R_t=\operatorname{rank}_{\mathbb K}K_t(A)$. Then $R_1=m$, and
for every $t\ge2$,
\[
 m+(t-2)r\le R_t\le m+(t-1)s\le2R_t.
\]
The factor two cannot be replaced by any smaller universal constant,
even when $A=A_{D_B}$ for a $C_4$-free incidence base $B$.
\end{theorem}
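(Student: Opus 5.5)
The plan is to carry out the whole argument with block row operations on $K_t(A)=J_t\otimes I_m+P_t\otimes L+P_t^{\mathsf T}\otimes L^{\mathsf T}$, the same decomposition used in the proof of Lemma~\ref{lem:lift}.

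\textbf{Upper bound.} The case $t=1$ is trivial, since $K_1(A)=I_m$. The inequality $R_t\le m+(t-1)s$ is Lemma~\ref{lem:lift} with $W=\operatorname{col}[L\ L^{\mathsf T}]$. The columns of the first summand lie in the diagonal space $U$, and the columns of the other two summands lie in $W^t$. Since $U\cap W^t$ is the diagonal copy of $W$, the sum $U+W^t$ has dimension $m+ts-s$.

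\textbf{Row operations.} Denote the block rows by $\mathrm{Row}_1,\ldots,\mathrm{Row}_t$. For $1\le i\le t-1$, replace $\mathrm{Row}_i$ by $D_i=\mathrm{Row}_i-\mathrm{Row}_{i+1}$. This block row has $-L^{\mathsf T}$ in block column $i$, has $L$ in block column $i+1$, and vanishes elsewhere. Keeping $\mathrm{Row}_1=(I,A,\ldots,A)$ as well, every block row of the form below is a row combination of $K_t(A)$, so its rank is at most $R_t$.

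\textbf{Lower bound $m+(t-2)r$.} Take the block rows $\mathrm{Row}_1,D_2,\ldots,D_{t-1}$ and the block columns $1,3,4,\ldots,t$. In this submatrix:
\begin{itemize}
\item $\mathrm{Row}_1$ has $I$ in column $1$;
\item each $D_i$ with $i\ge2$ has zero in column $1$ and $L$ in column $i+1$, which is its diagonal position;
\item the only other nonzero block of $D_i$ is $-L^{\mathsf T}$ in column $i$, and this block is present only when $i\ge3$, to the left of the diagonal.
\end{itemize}
So the submatrix is block lower triangular with diagonal blocks $I,L,\ldots,L$. Since the rank of a block triangular matrix is at least the sum of the ranks of its diagonal blocks, $R_t\ge m+(t-2)r$.

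\textbf{Comparison $m+(t-1)s\le 2R_t$.} Here I will use two separate lower bounds and add them.
\begin{itemize}
\item The block rows $D_1,D_3,D_5,\ldots$ occupy pairwise disjoint pairs of block columns, so their rank is the sum of their individual ranks. Each $D_i$ has rank $\operatorname{rank}[-L^{\mathsf T}\ L]=\dim(\operatorname{col}L+\operatorname{col}L^{\mathsf T})=s$. Hence $R_t\ge\lceil (t-1)/2\rceil s$.
\item The block rows $\mathrm{Row}_1,D_2,D_4,\ldots$ also help. Here $\mathrm{Row}_1$ contributes the pivot block $I$ in column $1$, while each $D_{2j}$ vanishes there and lives on the disjoint column pair $(2j,2j+1)$. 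Eliminating with the identity pivot gives $R_t\ge m+\lfloor (t-1)/2\rfloor s$.
\end{itemize}
Adding the two bounds gives $2R_t\ge m+(t-1)s$. A first attempt combining the triangular bound with $s\le 2r$ fails by a parity term, and I expect this pairing of odd and even difference rows to be the key point.

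\textbf{Sharpness.} I would take $t=2$ and $B=C_{2n}$ with $n\ge3$, which is $C_4$-free, and work over $\mathbb R$.
\begin{itemize}
\item In $D_B$ each flag $(u,v)$ has exactly one out-neighbour: take $v'$ to be the other neighbour of $u$, and $u'$ the other neighbour of $v'$. By symmetry each flag also has exactly one in-neighbour, so $A$ is a permutation matrix.
\item Therefore $A^{\mathsf T}A=I$, and the Schur complement gives $R_2=m+\operatorname{rank}(I-A^{\mathsf T}A)=m=2n$.
\item By Theorem~\ref{thm:baseexact}, since a cycle component has $\eta=1$, we get $s=k-c-1=2n-2$.
\end{itemize}
So $(m+s)/R_2=(4n-2)/(2n)\to2$, and no universal constant smaller than two works. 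The remaining checks are the permutation claim and the in-degree count, which are routine.
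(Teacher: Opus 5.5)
Your proof is correct. The upper bound, the lower bound $m+(t-2)r$ and the sharpness example ($C_{2\ell}$ with $t=2$) match the paper's up to presentation. The paper passes to the Schur complement of the first identity block. The consecutive block-row differences of that complement are exactly your $D_2,\dots,D_{t-1}$ restricted to block columns $2,\dots,t$. The paper also reads off $r=s=m-2$ from the two directed $\ell$-cycles rather than from Theorem~\ref{thm:baseexact}.

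One slip: with $\mathrm{Row}_1$ listed first, your submatrix is not block lower triangular, since $\mathrm{Row}_1$ carries $A$ in every other selected column. It has the form $\left(\begin{smallmatrix}I&*\\0&T\end{smallmatrix}\right)$ with $T$ block lower triangular, which is all the rank bound needs.

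The genuine difference is the step $m+(t-1)s\le2R_t$. The paper writes $m+(t-1)s=(m+s)+(t-2)s$. It bounds the first $s$ by $m$ and the remaining $t-2$ copies by $2r$, then concludes from $R_t\ge m+(t-2)r$. So your remark that combining the triangular bound with $s\le2r$ fails by a parity term is mistaken. It fails only if you apply $s\le2r$ to all $t-1$ copies.

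Your odd/even pairing of difference rows on disjoint column pairs instead gives
\[
R_t\ge\lceil(t-1)/2\rceil s,\qquad R_t\ge m+\lfloor(t-1)/2\rfloor s,
\]
directly in terms of $s$, and their sum is exactly the claim. This route is longer, but it never passes through $r$ and yields two standalone lower bounds. The second can exceed $m+(t-2)r$, by $r$ when $s=2r$ and $t$ is odd. The paper's route is shorter because it recycles the lower bound it has already proved.
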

\begin{proof}
For the upper bound, we apply the common-space argument of
Lemma~\ref{lem:lift} with
$W=\operatorname{col}L+\operatorname{col}L^{\mathsf T}$.

Taking the Schur complement of the first $I_m$ gives
$R_t=m+\operatorname{rank}S$, where $S$ is obtained from
$K_{t-1}(A)$ by subtracting $A^{\mathsf T}A$ from every block.
Form the $t-2$ consecutive block-row differences of $S$.
This left multiple has only $-L^{\mathsf T}$ in position $i$ and $L$ in
position $i+1$ of block row $i$.
If $t>2$ and $r>0$, choose a nonsingular minor $L[I,J]$ of order $r$.
Keep rows $I$ in each block row and columns $J$ in block columns
$2,\ldots,t-1$. This gives a block lower triangular minor with
$t-2$ diagonal blocks $L[I,J]$, so
$\operatorname{rank}S\ge(t-2)r$.
For $t=2$ or $r=0$, the lower bound is simply $R_t\ge m$.

Since $s\le m$ and $s\le2r$, for $t\ge2$ we have
\[
 m+(t-1)s=m+s+(t-2)s
 \le2m+2(t-2)r\le2R_t.
\]
For sharpness, take $B=C_{2\ell}$ with $\ell\ge3$ and put $A=A_{D_B}$.
Following the two directions around $B$ shows that $D_B$ is the
disjoint union of two directed $\ell$-cycles. Thus $m=2\ell$,
$r=s=m-2$, and $A^{\mathsf T}A=I_m$ over every field.
The Schur complement gives $R_2=m$, so the common-space ratio
is $(2m-2)/m\to2$. These bases are $C_4$-free.
\end{proof}

The supporting dimension here is the \emph{smallest} common column
space. An arbitrary larger choice need not satisfy the twofold
comparison. Since $r\le m$, we also obtain
$R_t\ge\max\{m,(t-1)r\}$ for $t\ge2$.

For incidence bases, put $e=k-c$. Theorem~\ref{thm:baseexact}
gives $r\ge e/2$: a component on $k_i\ge3$ vertices contributes
at least $k_i-2\ge(k_i-1)/2$, and a single edge contributes one.
Since $e\le m$, the same argument gives
\[
 \frac{m+(t-1)(k-c)}2
 \le\rho_{\mathbb K}(\Gamma(B,t))
 \le m+(t-1)(k-c)
\]
for $t\ge2$, with $t=1$ immediate. For the same even-cycle bases
at $t=2$, the endpoint ratio is $(2m-1)/m\to2$, so this
comparison is sharp as well.

Over $\mathbb R$, ordered layers retain low complement rank even when
direct symmetrization does not. If the base has no opposite arcs and $A=I+L$,
with the columns of $L,L^{\mathsf T}$ in $W$, then
$A+A^{\mathsf T}-2I=L+L^{\mathsf T}$ has rank at most $\dim W$,
but $A+A^{\mathsf T}+I$ acts as $3I$ on $W^\perp$.
By contrast, the lift acts on $(W^\perp)^t$ as
$J_t\otimes I_{W^\perp}$, where $I_{W^\perp}$ is the identity on
$W^\perp$. Its rank is $m-\dim W$, independently of $t$.

\section{Affine parameters and exact incidence ranks}

Fix a prime power $q\ge2$. Let $B_q$ be the incidence graph between
the points $(x,y)\in\F_q^2$ and the nonvertical lines
$\ell_{a,b}:y=ax+b$, where $a,b\in\F_q$. Every point and every
line has degree $q$, so $B_q$ has $2q^2$ vertices and $q^3$ edges.
Write $G(q,t)=\Gamma(B_q,t)$.

\begin{lemma}[The affine base and its lift]
The graph $B_q$ is connected and $C_4$-free. Its endpoint space has
dimension $2q^2-1$ over every field. The graph $G(q,t)$ is
triangle-free, has $q^3t$ vertices, and is $(q-1)^2(t-1)$-regular.
In particular, $|E(G(q,t))|=q^3t(q-1)^2(t-1)/2$.
Moreover, $\alpha(F(D_{B_q}))=q^2$ and $\chi(F(D_{B_q}))=q$.
The lift has the independence and chromatic numbers stated in
Theorem~\ref{thm:affine}; thus $\alpha(G(q,t))\chi(G(q,t))=q^3t$.
\end{lemma}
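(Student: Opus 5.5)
We prove the assertions one at a time, starting with the base $B_q$.

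\emph{Connectivity and the $C_4$-free property.} Two distinct points lie on at most one common line, so $B_q$ is $C_4$-free. For connectivity, take any two points. They lie on a common line unless they share an $x$-coordinate. In that case both points lie on lines through some third point that has a different $x$-coordinate. Since $B_q$ is connected, has no isolated vertices, and has $2q^2$ vertices, Lemma~\ref{lem:factor} gives endpoint dimension $2q^2-1$ over every field.

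\emph{Triangle-freeness, order and regularity.} Lemma~\ref{lem:transitive} together with Lemma~\ref{lem:lift} shows that $G(q,t)$ is triangle-free. Its order is $q^3t$. For regularity, we count out- and in-neighbours of a flag $e=(p,\ell)$ in $D_{B_q}$.
\begin{itemize}
\item An arc $e\to f=(p',\ell')$ requires $p\in\ell'$, $\ell'\ne\ell$ and $p'\in\ell'\setminus\{p\}$. There are $q-1$ choices of $\ell'$ and $q-1$ of $p'$. The condition $\ell'\ne\ell$ already forces $p'\notin\ell$, because $\ell\cap\ell'=\{p\}$.
\item Symmetrically, there are $(q-1)^2$ in-arcs: choose $p'\in\ell\setminus\{p\}$, then $\ell'\ni p'$ with $\ell'\ne\ell$.
\end{itemize}
A copy in layer $i$ therefore has $(q-1)^2(t-i)+(q-1)^2(i-1)=(q-1)^2(t-1)$ neighbours. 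The edge count then follows from the handshake lemma.

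\emph{The base parameters $\alpha(F(D_{B_q}))=q^2$ and $\chi(F(D_{B_q}))=q$.}

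For the lower bound on $\alpha$, fix a parallel direction. Concretely, take all flags $(p,\ell)$ with $\ell$ of a fixed slope $a$. There are $q^2$ of them. Two such flags on distinct parallel lines are nonadjacent, since the cross incidence $p\in\ell'$ would force $\ell=\ell'$. Two flags on the same line share $\ell$, and are nonadjacent by the definition of $D_B$. Partitioning the flags by slope gives $q$ such classes, so $\chi(F(D_{B_q}))\le q$.

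For the upper bound on $\alpha$, take $U=V$ in the incidence independence argument. With $\delta=q$, the weighted inequality of Proposition~\ref{prop:independence-endpoints} gives $2|U|+(q-2)|U|\le q^3$, so $|U|\le q^2$. Then $\chi\ge q^3/q^2=q$.

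\emph{The lift parameters.} Proposition~\ref{prop:independence-endpoints} with $\delta=q$, $m=q^3$ and $a=q^2$ gives $\alpha(G(q,t))=q^3$ for $t\le q$. For $t\ge q$ it gives
\[
q^3+(t-q)q^2=tq^2,
\]
which matches the lower bound $ta=tq^2$. Hence $\alpha=\max\{q^3,tq^2\}$.

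For the chromatic number:
\begin{itemize}
\item The $t$ layers are independent, so $\chi\le t$.
\item Colouring copies by the slope class of the base flag gives a proper colouring, because lift edges project to edges of $F(D)$. So $\chi\le q$.
\item Conversely, $\chi\ge q^3t/\alpha$, which equals $t$ when $t\le q$ and $q$ when $t\ge q$.
\end{itemize}
Thus $\chi=\min\{q,t\}$, and $\alpha\chi=q^3t$ in both regimes.

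\emph{Main obstacle.} The delicate step is the upper bound $\alpha(F(D_{B_q}))\le q^2$. We must make sure that the weighted inequality applies with $U=V$, meaning that an independent set of $F(D)$ has no arcs within itself. This holds, so Proposition~\ref{prop:independence-endpoints} gives the bound directly, and no separate geometric counting argument is needed.
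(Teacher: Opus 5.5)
Your proof is correct. Apart from one step it follows the paper's argument: the same $C_4$ and connectivity reasoning, the same appeal to Lemmas~\ref{lem:factor}, \ref{lem:transitive} and~\ref{lem:lift}, the same slope and layer colourings, and the same use of Proposition~\ref{prop:independence-endpoints} for the lift.

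The step that differs is the upper bound $\alpha(F(D_{B_q}))\le q^2$.
\begin{itemize}
\item The paper identifies $F(D_{B_q})$ with the superline graph of $B_q$. It cites Kostochka, Pudl\'ak and R\"odl for $\alpha(F(D_{B_q}))\le\alpha(B_q)$, and then uses Hall's theorem (a perfect matching in the regular bipartite graph $B_q$) to get $\alpha(B_q)=q^2$.
\item You plug the pair $U=V=I$ into the weighted inequality $|U|+|V|+(\delta-2)|U\cap V|\le m$ from the proof of that proposition. The pair is admissible because an independent set of $F(D)$ spans no arcs and $D$ is loopless. This gives $\alpha(F(D_B))\le m/\delta$ for any base of minimum degree $\delta\ge2$.
\end{itemize}

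Your route is self-contained and needs no external results. It can even be read off the proposition's statement rather than its proof: taking $t=\delta=q$ gives $q\,\alpha(F(D_{B_q}))\le\alpha(\Gamma(B_q,q))=q^3$. The paper's route instead measures $\alpha(F(D_B))$ against $\alpha(B)$, which links the construction to the superline-graph literature. For $B_q$ the two bounds coincide.
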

\begin{proof}
Two points with different first coordinates lie on exactly one
nonvertical line; two distinct points with the same first coordinate
lie on none. This excludes four-cycles. Joining the latter two points
through a point with a different first coordinate proves connectivity.
The dimension and triangle-freeness follow from
Lemmas~\ref{lem:factor} and~\ref{lem:lift}.
From a flag, an outgoing arc chooses a different line through its
point and then a different point on that line. Thus the directed
base has in-degree and out-degree $(q-1)^2$. A lifted vertex has
this many neighbors in each other layer, proving the degree and
edge-count formulas.
An arc joins flags whose lines have different slopes: parallel
lines cannot both pass through the first flag's point. Thus coloring
by line slope uses $q$ colors, each on exactly $tq^2$ vertices.
Coloring by layers uses $t$ colors, each on $q^3$ vertices.
These colorings give the stated lift bounds and
$\alpha(F(D_{B_q}))\ge q^2$.

For the reverse base bound, $F(D_{B_q})$ is the superline graph
of $B_q$: two disjoint incidence edges are adjacent when an edge
of $B_q$ joins their endpoints. By Kostochka et al.\ \cite[Lemma~2.1]{KPR}, its independence number is at most
$\alpha(B_q)$. Since $B_q$ is regular and bipartite, Hall's theorem~\cite{Hall}
gives a perfect matching, so $\alpha(B_q)=q^2$. The slope coloring
therefore attains both base bounds. Apply
\eqref{eq:incidence-independence} with $m=q^3$, $\delta=q$
and $a=q^2$. It gives $\alpha(G(q,t))\le q^3$ for $t\le q$
and $\alpha(G(q,t))\le tq^2$ for $t\ge q$.
The full layer and slope class attain these bounds. Finally,
$\chi\ge |V|/\alpha=\min\{q,t\}$ matches the two colorings.
This proves the independence and coloring assertions in
Theorem~\ref{thm:affine}, including $q=2$ and $t=1$.
\end{proof}

The endpoint-space bound in Lemma~\ref{lem:lift} proves the
field-uniform upper bound in Theorem~\ref{thm:affine}. Before proving
real equality, we identify the smaller common space that occurs
in exceptional characteristics.

\begin{corollary}[The affine common space in every characteristic]
\label{cor:affine-common-field}
For a field $\mathbb K$, let $\varepsilon_{\mathbb K}(q)$ be one
if $q(q-2)=0$ in $\mathbb K$, and zero otherwise. Put
$s_{\mathbb K}(q)=2q^2-1-\varepsilon_{\mathbb K}(q)$ and
$A=A_{D_{B_q}}$. Then
\[
 \operatorname{rank}_{\mathbb K}(A-I)
 =\operatorname{rank}_{\mathbb K}[A-I\ \ A^{\mathsf T}-I]
 =s_{\mathbb K}(q),
\]
and, for every $t\ge2$,
\[
 q^3+(t-2)s_{\mathbb K}(q)
 \le \rho_{\mathbb K}(G(q,t))
 \le q^3+(t-1)s_{\mathbb K}(q).
\]
The gap is at most $2q^2-1$, independently of $t$ and $\mathbb K$.
For $t=1$, the rank is $q^3$.
\end{corollary}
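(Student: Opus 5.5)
The plan is to reduce everything to Theorem~\ref{thm:baseexact} and Theorem~\ref{thm:rank-comparison}, applied to the connected base $B_q$. For $B_q$ we have $k=2q^2$, $c=1$ and $m=q^3$. Hence the integer $m-k=q^3-2q^2=q^2(q-2)$.

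\emph{First rank.} The first formula of Theorem~\ref{thm:baseexact} gives $\operatorname{rank}_{\mathbb K}(A-I)=2q^2-1-z_{\mathbb K}(B_q)$. Here $z_{\mathbb K}(B_q)=1$ exactly when $q^2(q-2)=0$ in $\mathbb K$. Since $\mathbb K$ is a field, this is equivalent to $q(q-2)=0$ in $\mathbb K$. So $z_{\mathbb K}(B_q)=\varepsilon_{\mathbb K}(q)$, and the first rank equals $s_{\mathbb K}(q)$.

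\emph{Concatenated rank.} By the second formula, $\operatorname{rank}[A-I\ A^{\mathsf T}-I]=2q^2-1-\eta_{\mathbb K}(B_q)$. The edge condition reads $(q-1)^2=1$, that is, $q(q-2)=0$ in $\mathbb K$. This already implies $m-k=0$ in $\mathbb K$, so $\eta_{\mathbb K}(B_q)=\varepsilon_{\mathbb K}(q)$. I will check that $\eta$ is defined as a subcount of the components counted by $z$ ("those components that also satisfy"). This is consistent here, because the condition implies $z=1$. Therefore both ranks equal $s_{\mathbb K}(q)$.

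\emph{Lift bounds.} The matrix $A_{G(q,t)}+I$ is exactly $K_t(A)$ in Theorem~\ref{thm:rank-comparison}: it has diagonal blocks $I$, blocks $A$ above the diagonal and blocks $A^{\mathsf T}$ below, by the Kronecker formula. With $r=s=s_{\mathbb K}(q)$, that theorem gives $q^3+(t-2)s\le\rho_{\mathbb K}(G(q,t))\le q^3+(t-1)s$ for $t\ge2$. The difference of the two bounds is $s\le 2q^2-1$, which is independent of $t$ and $\mathbb K$. For $t=1$ the graph is edgeless on $q^3$ vertices, so the rank is $q^3$ (also $R_1=m$).

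There is no real obstacle. The only care needed is the field-level equivalence $q^2(q-2)=0\iff q(q-2)=0$, which uses the absence of zero divisors. The other point is confirming that the $\eta$ condition is the same as $(q-1)^2=1$, which holds because $B_q$ is $q$-regular.
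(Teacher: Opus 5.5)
Your proposal is correct and is essentially the paper's own proof. Like the paper, you identify $z_{\mathbb K}(B_q)=\eta_{\mathbb K}(B_q)=\varepsilon_{\mathbb K}(q)$ via the equivalences $q^2(q-2)=0\iff q(q-2)=0\iff(q-1)^2=1$ in $\mathbb K$, and then apply Theorems~\ref{thm:baseexact} and~\ref{thm:rank-comparison} with $r=s=s_{\mathbb K}(q)$, having noted that $A_{G(q,t)}+I=K_t(A)$.
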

\begin{proof}
The shifted-rank exception in Theorem~\ref{thm:baseexact} is
$q^2(q-2)=0$, equivalently $q(q-2)=0$. When it holds,
$(q-1)^2=1$, so the common-space dimension also decreases by one.
Apply Theorems~\ref{thm:baseexact} and~\ref{thm:rank-comparison}.
\end{proof}

\subsection*{Singular pairs and exact affine ranks}

All ranks and inner products in the rest of this section are over
$\mathbb R$, unless another coefficient field is specified.
Over $\mathbb R$ or $\mathbb C$, vector norms are Euclidean
and matrix norms are the induced operator norms.
A singular-pair recurrence determines the rank at every layer count;
the affine formulas and the three-layer classification follow from it.

\begin{theorem}[Spectral formula for biregular incidence lifts]
\label{lem:singular-blocks}
Let $B=(L\cup R,E)$ be a nonempty simple bipartite graph of degree
$a\ge2$ on $L$ and degree $b\ge2$ on $R$, with $a+b>4$.
Write $m=|E|$, $k=|L|+|R|$, and let $c$ be its number of
components and $N$ its bipartite incidence matrix.
Write $\mathbf1_{\{P\}}$ for the indicator of a condition $P$, and put
\[
 \zeta_B=(|L|-\rk N)\mathbf1_{\{a=2\}}
        +(|R|-\rk N)\mathbf1_{\{b=2\}}.
\]
Define the polynomials
\begin{equation}\label{eq:g-recurrence}
 g_{t+2}(x)=(2-x)g_{t+1}(x)-g_t(x)+2(a-1)(b-1),
 \qquad g_1(x)=1,\quad g_2(x)=(a-2)(b-2).
\end{equation}
Let $\kappa_t(B)$ count, with multiplicity, the eigenvalues
$0<\lambda<ab$ of $NN^{\mathsf T}$ for which $g_t(\lambda)=0$.
Then $0\le\kappa_t(B)\le\rk N-c$, and for every $t\ge1$,
\begin{equation}\label{eq:biregular-rank}
 \rho(\Gamma(B,t))
 =m+(t-1)(k-c)-\kappa_t(B)-\zeta_B\mathbf1_{\{t=2\}}.
\end{equation}
\end{theorem}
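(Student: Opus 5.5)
The plan is to split off the part of $\mathbb R^m$ on which the lift is trivial, and then diagonalize what remains along the singular value decomposition of $N$. This reduces the rank computation to $t\times t$ and $2t\times2t$ blocks.

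\emph{Step 1: split $\mathbb R^m$.} Write $L=A-I=CQC^{\mathsf T}$ as in Lemma~\ref{lem:factor}, and put $W=\operatorname{col}C$. Then $\dim W=k-c$ and $\ker C^{\mathsf T}=W^\perp$. Hence $L$ and $L^{\mathsf T}$ vanish on $W^\perp$ and map into $W$. The matrix
\[
 K=J_t\otimes I+P_t\otimes L+P_t^{\mathsf T}\otimes L^{\mathsf T}
\]
is symmetric and preserves both $W^t$ and $(W^\perp)^t$. On $(W^\perp)^t$ it acts as $J_t\otimes I$, which has rank $m-(k-c)$, as already noted at the end of Section~2. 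It therefore suffices to show
\[
 \rk\bigl(K|_{W^t}\bigr)=t(k-c)-\kappa_t(B)-\zeta_B\mathbf1_{\{t=2\}}.
\]

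\emph{Step 2: endpoint coordinates.} Parametrize $W$ by $z\mapsto Cz$ with $z\in\mathbb R^k$; the kernel of this map is spanned by the $c$ componentwise vectors $h$. Biregularity gives $X^{\mathsf T}X=aI$, $Y^{\mathsf T}Y=bI$ and $X^{\mathsf T}Y=N$, so
\[
 G:=C^{\mathsf T}C=\begin{pmatrix}aI&N\\N^{\mathsf T}&bI\end{pmatrix},
 \qquad LC=CQG,\qquad L^{\mathsf T}C=CQ^{\mathsf T}G.
\]
Thus $K|_{W^t}$ is represented on $(\mathbb R^k)^t$, modulo $(\ker C)^t$, by
\[
 M=J_t\otimes I_k+P_t\otimes QG+P_t^{\mathsf T}\otimes Q^{\mathsf T}G .
\]
Next take an SVD $Nv_i=\sigma_iu_i$. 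The matrices $G$, $Q$ and $Q^{\mathsf T}$ all preserve each plane $\operatorname{span}\{(u_i,0),(0,v_i)\}$ with $\sigma_i>0$, and each line $(x,0)$ with $N^{\mathsf T}x=0$, or $(0,y)$ with $Ny=0$. So $M$ is block diagonal, with one $2t\times2t$ block $M_t(\lambda)$ for each positive eigenvalue $\lambda=\sigma^2$ of $NN^{\mathsf T}$, and with $t\times t$ blocks on the kernels.

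\emph{Step 3: the kernel blocks.} On a left kernel vector, $QG$ and $Q^{\mathsf T}G$ both act as $-a$. The block is therefore $J_t-a(J_t-I_t)$, with eigenvalues $a$ and $a+t(1-a)$. This is singular exactly when $t=a/(a-1)$, that is, $t=2$ and $a=2$. That block occurs with multiplicity $|L|-\rk N$. The right side is symmetric, and together these give the $\zeta_B\mathbf1_{\{t=2\}}$ term.

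\emph{Step 4: the eigenvalue $\lambda=ab$.} This eigenvalue has multiplicity $c$ and carries the vectors $h$. I expect its blocks to have nullity exactly one, absorbed by $\ker C$, so that they contribute rank $2t-1$ each. If $g_t(ab)=0$ never causes an additional drop, this is consistent with the count $t(k-c)$; that needs to be checked directly.

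\emph{Step 5: the blocks with $0<\lambda<ab$.} I would show two things:
\[
 \det M_t(\lambda)=\text{(nonzero factor)}\cdot g_t(\lambda),
 \qquad \operatorname{nullity}M_t(\lambda)\le1 .
\]
For the determinant, I would first take the Schur complement of the first layer, and then form consecutive block-row differences, exactly as in the proof of Theorem~\ref{thm:rank-comparison}. This yields a block tridiagonal matrix with $2\times2$ entries built from $QG$ and its transpose. Its determinants satisfy a three-term recurrence, and a continuant computation should turn it into \eqref{eq:g-recurrence}. The term $2-x$ should come from a trace, using that $\det(QG)$ on the plane is $ab-\lambda$ up to sign. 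The constant $2(a-1)(b-1)$ should come from the rank-one $J_t$ correction. The nullity bound should follow because the tridiagonal reduction determines a kernel vector from its first layer component up to scale.

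Summing the blocks, with the $\lambda=ab$ blocks giving $c$, then gives the count in Step~1. Finally, $\kappa_t\le\rk N-c$ holds because there are exactly $\rk N-c$ eigenvalues in $(0,ab)$, counted with multiplicity. This uses connectivity and Perron--Frobenius for each component, with $a+b>4$ excluding the degenerate cycle case.

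The main obstacle is Step~5. The risk is getting the normalization right, so that the determinant is exactly a nonzero multiple of $g_t(\lambda)$, with the initial values $g_1=1$ and $g_2=(a-2)(b-2)$ and the inhomogeneous constant matching. I would first verify $t=1,2,3$ by hand, and then fit the induction to those cases.
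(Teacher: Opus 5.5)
Your architecture is the paper's: split off $(W^\perp)^t$, decompose along the singular value decomposition of $N$, use scalar blocks on the two kernels and $2\times2$ blocks for $0<\lambda<ab$. Steps~1--3 are correct. However, Step~4 is wrong and Step~5 has a genuine gap.

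In Step~4, the $2t$-dimensional block over $\lambda=ab$ meets $(\ker C)^t$ in $t$ dimensions (one copy of $h$ in each layer), not one. What enters $\rk(K|_{W^t})$ is the rank of the induced map on the $t$-dimensional quotient. The kernel and mixed blocks give at most $t(k-2\rk N)+2t(\rk N-c)=t(k-2c)$, so each constant block must contribute exactly $t$. Your $2t-1$ per component overshoots by $c(t-1)$; the $2t\times2t$ block itself has nullity $t-1$. The clean treatment drops the plane altogether. $C$ maps it onto the indicator vector of the component, which is a common eigenvector of $A$ and $A^{\mathsf T}$ with eigenvalue $d=(a-1)(b-1)$. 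The layer matrix is therefore $(1-d)I_t+dJ_t$, with eigenvalues $1-d$ and $1+(t-1)d$. It is nonsingular because $d>1$, and this is the first genuine use of $a+b>4$; for cycles it is $J_t$, of rank one. The value $g_t(ab)$ plays no role.

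In Step~5, knowing that a kernel vector is determined by its first-layer component only gives nullity at most two, since that component ranges over a plane. The claim ``up to scale'' is exactly what must be proved. Write $M_z,Q_z$ for your $G,Q$ on a plane with singular value $z$, and put $R_z=Q_z^{-1}Q_z^{\mathsf T}$ and $D_z=I_2+M_zQ_z$. Consecutive block-row differences give $M_zx_i=R_z^{i-1}M_zx_1$. The first block row, multiplied by $M_z$, then becomes $F_tM_zx_1=0$ with $F_t=I_2+D_z\sum_{j=1}^{t-1}R_z^j$. You must show $F_t\ne0$. The paper notes that $F_t=0$ would make $D_z^{-1}=-\sum_{j=1}^{t-1}R_z^j$ commute with $R_z$. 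But $(D_zR_z-R_zD_z)_{11}=-(a-2)z^2$ and $(D_zR_z-R_zD_z)_{21}=(b-a)z$ cannot both vanish when $a+b>4$ and $z>0$.

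This is the second, indispensable use of the hypothesis. For $a=b=2$ one has $D_zR_z=-I_2$, so $F_2=0$. For $B=C_{2\ell}$ with $\ell\ge3$, $\rho(\Gamma(B,2))=m$, while \eqref{eq:biregular-rank} would give $m+\rk N$. The excess consists of one extra loss in each of the $\rk N-1$ mixed blocks and one in the constant block. By contrast, $a+b>4$ is not needed for your eigenvalue count: $ab$ has multiplicity $c$ in every biregular bipartite graph, cycles included.

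For the determinant, the paper uses $F_t(R_z-I_2)=D_zR_z^t+E_z$ with $E_z=R_z-I_2-D_zR_z$, together with $\det(R_z-I_2)=z^2$ and $\det D_z=\det E_z=d$. Cayley--Hamilton with $\operatorname{tr}R_z=2-z^2$ then gives \eqref{eq:g-recurrence}. Thus the term $2-x$ is $\operatorname{tr}R_z$, and $2(a-1)(b-1)=\det(D_zR_z^t)+\det E_z$.
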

\begin{proof}
The block identities below hold for all $a,b\ge2$, including
$a=b=2$. Put $A=A_{D_B}$, $W=\operatorname{col}[X\ Y]$,
and $d=(a-1)(b-1)$. By Lemma~\ref{lem:factor}, both $A$ and
$A^{\mathsf T}$ act as the identity on $W^\perp$.
For a vector $v$ on $R$, Cauchy--Schwarz gives
\[
 \|Nv\|^2=\sum_{u\in L}\Bigl(\sum_{w\sim u}v_w\Bigr)^2
 \le a\sum_{u\in L}\sum_{w\sim u}|v_w|^2=ab\|v\|^2.
\]
Equality requires $v$ to be constant on the neighbors of every
left vertex; because every vertex has positive degree, this means
one constant on the right part of each connected component.
Thus $\sqrt{ab}$ is simple on each component and contributes
one constant endpoint direction, with scalar action $d$.
The left and right zero singular spaces give scalar actions $1-a$
and $1-b$, with dimensions $|L|-\rk N$ and $|R|-\rk N$,
respectively.

Choose orthonormal singular pairs $Nv=zu$, $N^{\mathsf T}u=zv$
for the remaining values $0<z<\sqrt{ab}$. The spaces spanned by
$Xu,Yv$ are mutually orthogonal, and the endpoint factorization
gives their Gram matrix and shifted coordinate matrices:
\[
 M_z=\begin{pmatrix}a&z\\z&b\end{pmatrix},\qquad
 Q_z=\begin{pmatrix}-1&z\\0&-1\end{pmatrix},\qquad
 (A-I)|_z=Q_zM_z,\quad
 (A^{\mathsf T}-I)|_z=Q_z^{\mathsf T}M_z.
\]
The scalar and mixed dimensions add to
$c+(|L|-\rk N)+(|R|-\rk N)+2(\rk N-c)=k-c=\dim W$.
Thus the decomposition exhausts $W$, including when $|L|\ne|R|$
and when the two zero singular spaces have different dimensions.
A scalar direction with value $\lambda$ gives the layer matrix
$(1-\lambda)I_t+\lambda J_t$.

Since $M_z$ is positive definite and $Q_z$ is invertible,
consecutive kernel equations in the $t$-layer block give
$Q_zM_zx_{i+1}=Q_z^{\mathsf T}M_zx_i$, hence
$x_i=M_z^{-1}R_z^{i-1}M_zx_1$, where
\[
 R_z=Q_z^{-1}Q_z^{\mathsf T}
     =\begin{pmatrix}1-z^2&z\\-z&1\end{pmatrix},\qquad
 D_z=I_2+M_zQ_z.
\]
Substitution in the first equation, with $y_1=M_zx_1$, gives
\[
 F_ty_1=0,\qquad F_t=I_2+D_z\sum_{j=1}^{t-1}R_z^j.
\]
Conversely, this equation and the consecutive differences imply
all block equations. Thus the mixed block and $F_t$ have the same
nullity, also when $t=1$ and the sum is empty.

Put $E_z=R_z-I_2-D_zR_z$. Direct multiplication gives
\[
 D_zR_z=\begin{pmatrix}1-a&0\\(b-2)z&1-b\end{pmatrix},
 \quad \det D_z=\det E_z=d,\quad
 \det R_z=1,\quad \det(R_z-I_2)=z^2.
\]
Writing $\operatorname{tr}$ for trace and $\operatorname{adj}$ for
the classical adjugate, take determinants in
$F_t(R_z-I_2)=D_zR_z^t+E_z$ to obtain
\[
 z^2\det F_t=2d+
 \operatorname{tr}\bigl(\operatorname{adj}(E_z)D_zR_z^t\bigr).
\]
Cayley--Hamilton, with $\operatorname{tr}R_z=2-z^2$, gives
$\det F_t=g_t(z^2)$ through \eqref{eq:g-recurrence};
$F_1=I_2$ and $F_2=I_2+D_zR_z$ give its initial values.

Now assume $a+b>4$, so $d>1$. The scalar layer eigenvalues are
$1-\lambda$ with multiplicity $t-1$ and $1+(t-1)\lambda$.
For $\lambda\in\{d,1-a,1-b\}$, singularity occurs only when
$\lambda=-1$ and $t=2$, with nullity one. The total scalar
loss is therefore $\zeta_B\mathbf1_{\{t=2\}}$.
Also $F_t\ne0$: otherwise
$D_z^{-1}=-\sum_{j=1}^{t-1}R_z^j$ would force $D_z$ to commute
with $R_z$, but
\[
 (D_zR_z-R_zD_z)_{11}=-(a-2)z^2,\qquad
 (D_zR_z-R_zD_z)_{21}=(b-a)z
\]
cannot both vanish: if $a\ne2$ the first is nonzero, while
if $a=2$, the assumption $a+b>4$ makes the second nonzero.
Each mixed block therefore loses one rank, never two, exactly
when $g_t(z^2)=0$. The endpoint blocks contribute
$t(k-c)-\kappa_t(B)-\zeta_B\mathbf1_{\{t=2\}}$, and
$(W^\perp)^t$ contributes $m-k+c$ through
$J_t\otimes I_{W^\perp}$. This proves \eqref{eq:biregular-rank};
there are $\rk N-c$ positive nonprincipal singular pairs.
\end{proof}

The affine base is the Wenger graph $W_1(q)$: the equation
$y=ax+b$ becomes $p_2+l_2=p_1l_1$ under $p=(x,y)$ and $l=(a,-b)$.
By Cioab\u{a} et al.\ \cite[Theorem~2.1]{CLL}, its point--line
incidence matrix has singular values $q$ once, $\sqrt q$ with
multiplicity $q(q-1)$, and $0$ with multiplicity $q-1$.

\begin{proof}[Real equality in Theorem~\ref{thm:affine}]
For $B_q$, the only positive nonprincipal incidence singular value
is $\sqrt q$. By \eqref{eq:biregular-rank}, it suffices to show
$g_t(q)\ne0$ for every $t\ge1$, with $a=b=q$ in
\eqref{eq:g-recurrence}. In fact, this holds for every integer $q\ge3$.

Write $g_t=g_t(q)$. For $q=3$, the values repeat as $1,1,6$.
For $q=4$, induction in \eqref{eq:g-recurrence} gives
$g_t=4t-3$ for odd $t$ and $g_t=12-4t$ for even $t$.

Suppose $q\ge5$. Modulo $q-2$, the recurrence gives
$g_{2j+1}\equiv1$, so no odd-indexed term vanishes.
Put $c_q=2(q-1)^2$ and $b_t=(-1)^t(qg_t-c_q)$; then
$b_{t+2}=(q-2)b_{t+1}-b_t$.
For $q\ge6$, we have $b_1=2(q-1)^2-q>0$ and
$b_2-b_1=(q-6)(q-1)^2+2>0$, so all $b_t$ are positive
and increasing. For $q=5$, the first four values are
$(27,13,12,23)$, and the same induction applies from the third
term onward. Hence $qg_t=c_q+b_t>0$ for every even $t$.

Thus $\kappa_t(B_q)=0$. Since $B_q$ is connected with $q^3$
edges and $2q^2$ vertices, \eqref{eq:biregular-rank} gives
$\rho(G(q,t))=q^3+(2q^2-1)(t-1)$.
\end{proof}

The same algebraic blocks give exact ranks in certain coefficient fields.
The field congruences below concern the hypotheses; all dimensions
and rank formulas are integer-valued. Real singular-value arguments
are not used in positive characteristic.

Write $\operatorname{diag}(M_1,\ldots,M_s)$ for a block-diagonal matrix
(and likewise for scalar diagonal entries).
Write $\operatorname{char}\mathbb K$ for the characteristic of a
field $\mathbb K$, and $\operatorname{PG}(2,q)$ for the projective
plane whose points and lines are the one- and two-dimensional
subspaces of $\F_q^3$, with incidence given by containment.

\begin{proposition}[Periodic ranks in selected characteristics]
\label{prop:binary-rank}
Let $q$ be a prime power, $t\ge1$ an integer, and $\mathbb K$ a field.
If $\operatorname{char}\mathbb K\ne2$ and $q=2$ in $\mathbb K$, put
$\delta_{\mathbb K}(t)=\mathbf1_{\{t-2=0\text{ in }\mathbb K\}}$.
Then
\[
 \rho_{\mathbb K}(G(q,t))
 =q^3+(2q^2-2)(t-1)
 -2(q-1)\bigl(q\mathbf1_{\{t\equiv2\pmod4\}}
                     +\delta_{\mathbb K}(t)\bigr).
\]
Over $\mathbb R$, necessarily $q=2$, giving
$\rho(G(2,t))=6t+2-4\mathbf1_{\{t\equiv2\pmod4\}}
-2\mathbf1_{\{t=2\}}$.

If $\operatorname{char}\mathbb K=p\ge5$ and $q\equiv1\pmod p$,
let $B$ be the point--line incidence graph of $\operatorname{PG}(2,q)$.
Put $v=q^2+q+1$ and $m=(q+1)v$. Then
\begin{equation}\label{eq:projective-rank}
 \rho_{\mathbb K}(\Gamma(B,t))
 =m+(2v-2)\bigl(t-1-\mathbf1_{\{t\equiv2\pmod6\}}\bigr).
\end{equation}
\end{proposition}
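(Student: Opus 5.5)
The plan is to redo the singular-pair decomposition behind Theorem~\ref{lem:singular-blocks} purely algebraically over $\mathbb K$. In both cases $q$ is invertible in $\mathbb K$ and $NN^{\mathsf T}$ has only two or three explicit eigenvalues, so the eigenspaces can be cut out by polynomial projectors instead of real singular-value theory. I write
\[
 A_{\Gamma(B,t)}+I=J_t\otimes I_m+P_t\otimes CQC^{\mathsf T}+P_t^{\mathsf T}\otimes CQ^{\mathsf T}C^{\mathsf T},
 \qquad
 C^{\mathsf T}C=\begin{pmatrix}aI&N\\N^{\mathsf T}&bI\end{pmatrix}.
\]
In place of $W^\perp$ I use the complement $\ker C^{\mathsf T}$, on which both $A-I$ and $A^{\mathsf T}-I$ vanish by Lemma~\ref{lem:factor}. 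I first check that $C^{\mathsf T}C$ is invertible on the relevant endpoint space. Then $W\cap\ker C^{\mathsf T}=0$, the space $\ker C^{\mathsf T}$ contributes exactly $m-\dim W$ through $J_t\otimes I$, and the remaining rank is that of the $t$-layer block operator on $W^t$. On $W$ I decompose $K^{L}\oplus K^{R}$ into the invariant pieces of $NN^{\mathsf T}$ and $N^{\mathsf T}N$.

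\emph{Affine case.} Here $NN^{\mathsf T}=qI+J_{q^2}-I_q\otimes J_q$, with eigenvalue $q^2$ on $\mathbf1$, eigenvalue $0$ with multiplicity $q-1$ (vectors constant on $x$-fibres with zero total), and eigenvalue $q$ with multiplicity $q(q-1)$. Since $q=2\ne0$ in $\mathbb K$, the projectors exist. Each piece behaves as in the proof of Theorem~\ref{lem:singular-blocks}.
\begin{itemize}
\item The constant direction has scalar action $d=(q-1)^2=1$. Its layer matrix is $J_t$, which loses $t-1$; this explains the coefficient $2q^2-2$.
\item The two zero singular spaces, of total dimension $2(q-1)$, have scalar action $1-q=-1$. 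Their layer matrix $2I_t-J_t$ has eigenvalues $2$ and $2-t$, so it loses exactly $2(q-1)\delta_{\mathbb K}(t)$.
\item The $q(q-1)$ mixed pairs have $z^2=q=2$ and $\det M_z=q^2-q=2\ne0$. Hence the reduction to $F_t$ and the identity $\det F_t=g_t(2)$ go through verbatim over $\mathbb K$.
\end{itemize}
With $a=b=2$ in $\mathbb K$, the recurrence gives $g_1,g_2,\ldots=1,0,1,2,1,0,\ldots$, which vanishes exactly when $t\equiv2\pmod4$.

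\emph{Projective case.} Now $q+1=2$, so $a=b=2$ and $d=1$ in $\mathbb K$. We have $NN^{\mathsf T}=qI+J=I+J$, with eigenvalue $(q+1)^2=4$ on $\mathbf1$ and $1$ elsewhere. Also $\det(NN^{\mathsf T})=(q+1)^2q^{v-1}\ne0$, so there are no zero singular spaces. The constant direction again loses $t-1$, which gives $2v-2$. The $v-1$ mixed pairs have $z^2=1$, and the recurrence $g_{t+2}=g_{t+1}-g_t+2$ yields the period $1,0,1,3,4,3$, vanishing iff $t\equiv2\pmod6$.

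The main obstacle is the nullity of the mixed block when $g_t=0$. Theorem~\ref{lem:singular-blocks} excluded $F_t=0$ using $a+b>4$, which fails here since $a=b=2$ in $\mathbb K$. The target formulas demand that each pair lose two, i.e.\ $F_t=0$ exactly. I would verify this directly. With $a=b=2$ one has $D_zR_z=-I$, so $D_z=-R_z^{-1}$ and $F_t=I-\sum_{j=0}^{t-2}R_z^j$. The relevant $R_z$ has order $4$ (when $z^2=2$) or $6$ (when $z^2=1$), and I would check by Cayley--Hamilton that this sum equals $I$ exactly at the stated residues. A second, lesser point is making the complement argument rigorous in positive characteristic. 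This requires $C^{\mathsf T}C$ to be invertible on $W$, which follows from the eigenvalue list ($a\cdot b-z^2\ne0$ on each piece, and $a,b\ne0$ on the zero spaces). Finally, the real specialization follows by noting that $q=2$ in $\mathbb R$ forces $q=2$ and $\delta_{\mathbb R}(t)=\mathbf1_{\{t=2\}}$.
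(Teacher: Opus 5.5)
Your proposal is correct and follows essentially the paper's proof. Both use the same eigenspace decomposition of $NN^{\mathsf T}$ (distinct eigenvalues $4,2,0$, respectively $4,1$, in $\mathbb K$) feeding the field-independent block calculation of Theorem~\ref{lem:singular-blocks}, with scalar layer matrices $J_t$ and $2I_t-J_t$, and each mixed pair losing exactly two when $F_t=0$. Your identity $D_zR_z=-I_2$ (valid whenever $a=b=2$ in $\mathbb K$) is a unified form of the paper's relations $D_z=R_z$, $R_z^2=-I_2$ and $D_z=R_z^2$, $R_z^3=-I_2$. The one check you leave implicit is that the constant direction does not lie in $\ker C^{\mathsf T}$; your criterion $ab-z^2\neq0$ fails on that piece, which contains $\ker C$. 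This is immediate, since $C^{\mathsf T}\mathbf1_m=(a\mathbf1_L,b\mathbf1_R)$ with $a=b=2\neq0$; it corresponds to the paper's remark that $m=8$ or $6$ in $\mathbb K$.
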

\begin{proof}
Let $B$ denote the relevant incidence base and put $m=|E(B)|$.
Work over an algebraic closure, which preserves rank.
The affine incidence matrix, with points grouped by first coordinate,
and the projective incidence matrix satisfy, respectively,
\[
 NN^{\mathsf T}=qI_{q^2}+J_{q^2}-I_q\otimes J_q,
 \qquad NN^{\mathsf T}=qI_v+J_v.
\]
In the affine case the constant vectors, the vectors summing to zero
within each group, and the vectors constant on each group with total
sum zero form eigenspaces with eigenvalues $q^2,q,0$ and dimensions
$1,q(q-1),q-1$, respectively. Each line meets each group once,
so the last space is $\ker N^{\mathsf T}$; the same holds on
the line side. In the projective case the eigenvalues are
$(q+1)^2,q$, with multiplicities $1,v-1$.
Under the respective field hypotheses, the affine eigenvalues are
$4,2,0$ and the projective eigenvalues are $4,1$, all distinct.
The displayed eigenspaces are therefore mutually orthogonal and
nondegenerate for the standard bilinear form. Over the algebraic
closure, choose an orthonormal basis in each nonzero eigenspace.
For an eigenvector $u$ with eigenvalue $z^2\ne0$, put
$v=z^{-1}N^{\mathsf T}u$; then $v^{\mathsf T}v=u^{\mathsf T}u$,
$Nv=zu$, and $N^{\mathsf T}u=zv$.
The mixed endpoint Gram determinants are $q(q-1)=2$ and
$(q+1)^2-q=3$, respectively, so these blocks are nonsingular.
Moreover $m=8$ or $6$ in the coefficient field, respectively;
thus the constant direction is nondegenerate as well.
The algebraic calculation in Theorem~\ref{lem:singular-blocks}
now applies without using real positivity.

In both cases the principal direction has scalar action one,
so it and $W^\perp$ contribute $m-|V(B)|+2$ to the rank.
In the affine case each of the $2(q-1)$ zero singular directions
gives $2I_t-J_t$, of rank $t-\delta_{\mathbb K}(t)$.
On each of its $q(q-1)$ mixed pairs, $D_z=R_z$ and $R_z^2=-I_2$.
Thus $F_t=I_2,0,-R_z,I_2-R_z$ for
$t\equiv1,2,3,0\pmod4$, respectively; only the zero matrix is
singular because $\det(I_2-R_z)=2$.
Each pair contributes $2t-2\mathbf1_{\{t\equiv2\pmod4\}}$,
proving the first formula.

In the projective case there are no zero singular directions.
Each of the $v-1$ mixed pairs has
\[
 R_z=\begin{pmatrix}0&1\\-1&1\end{pmatrix},\qquad
 D_z=R_z^2=R_z-I_2,\qquad R_z^3=-I_2,
 \qquad F_t=R_z^t-R_z^2.
\]
This matrix is zero when $t\equiv2\pmod6$; in every other
class its determinant is $1$, $3$, or $4$, hence nonzero.
Adding the mixed contributions to $m-2v+2$ proves
\eqref{eq:projective-rank}, including $t=1,2$.
\end{proof}

In the affine case of Proposition~\ref{prop:binary-rank}, with odd
characteristic $p$, $q\equiv2\pmod p$, and $t\ge2$, both
bounds in Corollary~\ref{cor:affine-common-field} are attained:
the lower bound when $t\equiv2\pmod{4p}$, and the upper bound
when $t\equiv1\pmod{4p}$. Thus both endpoints are attained
infinitely often in $t$.

For instance, the $625$-vertex graph $G(5,5)$ has rank $321$
over $\mathbb R$ but $309$ over $\mathbb F_3$.
Over $\mathbb R$, the exact rank also shows that $-1$ has
adjacency multiplicity $(t-1)(q^3-2q^2+1)$ for $q\ge3$;
when $t\asymp q$, this is $N-O(N^{3/4})$ for $N=q^3t$.

\section{Sharp bounds and stability}

For a nonempty $C_4$-free bipartite graph without isolated vertices,
write $m$ for its number of edges, $k$ for its number of vertices,
and $a,b$ for its part sizes. Reiman~\cite{Reiman} and Keevash et al.\ \cite[Proposition~1.4]{KSV} give the classical bounds
\begin{equation}\label{eq:reiman-total}
 m\le\frac{k}{4}\bigl(1+\sqrt{2k-3}\bigr)
   =\left(\frac{k}{2}\right)^{3/2}+O(k),
\end{equation}
and the rectangular inequalities $m^2\le ba(a-1)+bm$ and
$m\le a\sqrt b+b$. The symmetric bound is the $\ell=2$ case
of the latter result (with $k=2$ immediate).

\Needspace{12\baselineskip}
\begin{lemma}[Edge bounds for admissible incidence bases]
\label{lem:rank-edges}
Let $B$ be a nonempty simple bipartite graph such that $D_B$ has no
transitive directed triangle. Write $m=|E(B)|$, let $k$ count
the non-isolated vertices, and put
$r=\operatorname{rank}_{\mathbb K}(A_{D_B}-I_m)$.
Set $\Phi(x)=x(1+\sqrt{2x-3})/4$ for $x\ge2$.
For all sufficiently large $k$ and $r$, respectively,
\[
 m\le\Phi(k),\qquad m\le\Phi(r+2).
\]
Consequently, uniformly in $B$ and $\mathbb K$,
\begin{equation}\label{eq:rank-edge}
 \begin{aligned}
 m&\le(r/2)^{3/2}+\frac r4+O(\sqrt r),\qquad
 r\ge2m^{2/3}-\frac23m^{1/3}-\frac43-O(m^{-1/3}).
 \end{aligned}
\end{equation}
\end{lemma}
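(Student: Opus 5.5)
The plan is to prove the vertex bound $m\le\Phi(k)$ first, then convert it into the rank bound through Theorem~\ref{thm:baseexact}, and finally invert $\Phi$ to obtain \eqref{eq:rank-edge}. The base may contain four-cycles, so Reiman's bound \eqref{eq:reiman-total} cannot be applied directly. Instead I will use the structural description from Lemma~\ref{lem:transitive}: $D_B$ has no transitive directed triangle exactly when every edge of every four-cycle has an endpoint of degree at most two.

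\emph{Reduction to a $C_4$-free core.} Let $S$ be the set of vertices of degree at most two that lie on some four-cycle of $B$, and put $B'=B-S$. Any four-cycle of $B'$ is also a four-cycle of $B$. By Lemma~\ref{lem:transitive}, each of its edges then has an endpoint of degree at most two in $B$, and that endpoint lies on the cycle, hence in $S$. This is a contradiction, so $B'$ is $C_4$-free. Deleting $S$ removes at most $2|S|$ edges. Applying \eqref{eq:reiman-total} to $B'$ after discarding its isolated vertices gives
\[
 m\le\Phi(k-|S|)+2|S|.
\]
(If $B'$ is empty, then $m\le2|S|\le 2k$.) Since $\Phi'(x)\sim\tfrac38\sqrt{2x}$, we have $\Phi'\ge2$ once $x$ exceeds an absolute constant. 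On that range, $x\mapsto\Phi(x)-2x$ is nondecreasing, so $m\le\Phi(k)$ for all sufficiently large $k$. For bounded $k-|S|$, the estimate $m\le 2k+O(1)\le\Phi(k)$ for large $k$ covers the remaining cases.

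\emph{From vertices to rank.} By Theorem~\ref{thm:baseexact}, $r=k-c-z_{\mathbb K}(B)\ge k-2c$, and a component with $k_i$ vertices contributes $r_i\ge k_i-2$. Isolated edges contribute $r_i=1$ and $m_i=1$. The natural route is componentwise: show $m_i\le\Phi(r_i+2)+O(1)$, then combine the components using superadditivity of $\Phi$. Superadditivity holds because $\Phi(x)/x$ is increasing, which gives $\Phi(x+y)\ge\Phi(x)+\Phi(y)$, with a convexity surplus of order $\min(x,y)\sqrt{\min(x,y)}$.

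This step is the main obstacle. Small components violate $m_i\le\Phi(k_i)$; for example, $K_{2,2}$ has $m=4>\Phi(4)\approx3.24$, and its rank drops to $2$ in every field. Such defects can accumulate over many components. My first attempt is to bound the total defect linearly, namely $m\le\sum_i\Phi(r_i+2)+C\cdot\#\{\text{small components}\}$, and then absorb it into the surplus $\Phi(r+2)-\sum_i\Phi(r_i+2)$. That surplus grows at least linearly in the number of components whenever $r$ is large, since merging $c'$ parts of bounded size into a total of size $r$ gains about $c'\sqrt r$. If this absorption is delicate, the fallback is to split into large and small components and bound the small ones crudely by $O(c')\le O(r)$, which is still $o(\Phi(r+2))$. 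That weaker form is all \eqref{eq:rank-edge} needs, but the statement as written asks for the exact form $m\le\Phi(r+2)$, so the absorption argument is the step to get right.

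\emph{Asymptotics.} Expanding gives
\[
 \Phi(r+2)=\frac{r+2}{4}\Bigl(1+\sqrt{2r+1}\Bigr)=\Bigl(\frac r2\Bigr)^{3/2}+\frac r4+O(\sqrt r),
\]
which is the first inequality of \eqref{eq:rank-edge}. For the second, invert it: set $r=2m^{2/3}(1+\epsilon)$, substitute, and solve to second order. This yields $r\ge2m^{2/3}-\tfrac23m^{1/3}-\tfrac43-O(m^{-1/3})$, with constants independent of $B$ and $\mathbb K$ because only integer ranks from Theorem~\ref{thm:baseexact} were used.
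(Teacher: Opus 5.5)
Your plan has the same structure as the paper's proof: reduce to a $C_4$-free core and apply \eqref{eq:reiman-total}, pass to components via $k_i\le r_i+2$ from Theorem~\ref{thm:baseexact}, then invert. Your variant of the first step is correct: you delete only the low-degree vertices on four-cycles and compare via $\Phi'\ge2$.

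\textbf{The gap: combining the components.} You leave this step open, and the tool you name cannot close it. Superadditivity of $\Phi$ only gives $\sum_i\Phi(r_i+2)\le\Phi(r+2c)$, not $\Phi(r+2)$. What you need is monotonicity of $h(x)=\Phi(x+2)/x$. This holds only for $x\ge4$; indeed $h(1)=\Phi(3)\approx2.05>h(4)=3/2$, so $x\mapsto\Phi(x+2)$ is not superadditive on small arguments.

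The paper's fix is to linearize at the total rank:
\begin{itemize}
\item A component with $k_i\ge26$ has $r_i\ge24$ and $m_i\le\Phi(k_i)\le\Phi(r_i+2)=r_ih(r_i)\le r_ih(r)$.
\item A component with $k_i\le25$ has $m_i\le\lfloor k_i^2/4\rfloor\le7r_i$, and $7r_i\le r_ih(r)$ once $r\ge400$.
\end{itemize}
Summing with $\sum_ir_i=r$ gives exactly $m\le rh(r)=\Phi(r+2)$, with no error term. This is also the rigorous form of your surplus heuristic. Indeed $\Phi(r+2)-\sum_i\Phi(r_i+2)=\sum_ir_i\bigl(h(r)-h(r_i)\bigr)$, and each bounded component contributes at least $h(r)-O(1)\to\infty$.

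\textbf{The fallback fails.} It would not prove \eqref{eq:rank-edge}. A bound $m\le\Phi(r+2)+O(r)$ swamps the $r/4$ term of the first estimate. On inversion it leaves even the coefficient of $m^{1/3}$ uncontrolled. Since $dm/dr\sim\frac34m^{1/3}$, an additive error $E$ in $m$ costs about $E/m^{1/3}$ in $r$. So obtaining the constant $-\frac43$ with error $O(m^{-1/3})$ requires $m\le\Phi(r+2)+O(1)$.

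\textbf{The inversion must use the exact inequality.} For the same reason, you must invert $m\le\Phi(r+2)$ itself, not its expansion with $O(\sqrt r)$ error. The expansion only yields $r\ge2m^{2/3}-\frac23m^{1/3}-O(1)$. The paper inverts without an ansatz. Writing $m=(x+1)(x^2+x+1)$ gives $\Phi(2x^2+2x+2)=m$, hence $r\ge2x^2+2x$. Then $x=w-\frac23-\frac2{9w}+O(w^{-2})$ with $w=m^{1/3}$.
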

\begin{proof}
Delete all vertices of original degree at most two and any
resulting isolates. By
Lemma~\ref{lem:transitive}, the core is $C_4$-free; if it has
$k'$ vertices, at most $2(k-k')$ edges were deleted.
For $k\ge26$, the function $\Phi(x)/x$ is increasing and
$\Phi(k)/k\ge2$. Thus \eqref{eq:reiman-total} gives
$m\le\Phi(k')+2(k-k')\le\Phi(k)$, with the empty core
handled by $m\le2k$.

For each nonempty component, use subscripts $i$ for its counts
and shifted rank. Theorem~\ref{thm:baseexact} gives
$r_i\ge1$ and $k_i\le r_i+2$.
The function $h(x)=\Phi(x+2)/x$ is increasing for $x\ge4$:
the sign of its derivative is the sign of
$x^2-2x-2-2\sqrt{2x+1}$, which is nonnegative there.
If $k_i\ge26$, it follows that
$m_i\le\Phi(r_i+2)\le r_i h(r)$.
If $k_i\le25$, the elementary bound
$m_i\le\lfloor k_i^2/4\rfloor$ instead gives $m_i\le7r_i$.
For $r\ge400$ we have $h(r)>7$, so summation yields
$m\le rh(r)=\Phi(r+2)$.
Expansion gives the first estimate in \eqref{eq:rank-edge}.
For the inverse, write $m=(x+1)(x^2+x+1)$ with $x\ge0$.
Since $\Phi$ is increasing and $\Phi(2x^2+2x+2)=m$,
the bound $m\le\Phi(r+2)$ gives $r\ge2x^2+2x$ for large $m$.
With $w=m^{1/3}$, the defining cubic gives
$x=w-2/3-2/(9w)+O(w^{-2})$.
Substitution proves the second estimate in \eqref{eq:rank-edge}.
Bounded orders are absorbed in the absolute error constants.
\end{proof}

\begin{proof}[Proof of Theorem~\ref{thm:methodconstant}]
Put $m=|E(B)|$, $r=\operatorname{rank}_{\mathbb K}(A_{D_B}-I)$,
and $R=\rho_{\mathbb K}(\Gamma(B,t))$.
We may assume $R\le(C_*+1)N^{3/4}$. For large $N$ this forces
$t\ge3$, since a full independent layer gives $R\ge m\ge N/2$
when $t\le2$. The base therefore satisfies
Lemma~\ref{lem:rank-edges}, which gives $r\ge c_0m^{2/3}$
for an absolute $c_0>0$. Since $t=N/m$ and $t\ge3$, the bound $R\ge(t-1)r$ gives
$R\ge c_0N/(2m^{1/3})$. Since
$R\le(C_*+1)N^{3/4}$, we get
$m\ge c_1N^{3/4}$; on the other hand
$m\le R\le(C_*+1)N^{3/4}$.
Thus $m\asymp N^{3/4}$ and $t=N/m\asymp N^{1/4}$.
In particular $m\to\infty$, so the large-$m$ inverse estimate
applies uniformly.

Put $y=m/N^{3/4}$ and $\epsilon=N^{-1/4}$. Substituting the
more precise inverse estimate \eqref{eq:rank-edge} in
$R\ge m+(t-2)r$ gives, uniformly on the resulting compact
interval for $y$,
\[
 R\ge N^{3/4}\bigl(f(y)-\epsilon g(y)+\epsilon^2h(y)\bigr)-O(1),
\]
where
\[
 f(y)=y+2y^{-1/3},\quad
 g(y)=4y^{2/3}+\frac23y^{-2/3},\quad
 h(y)=\frac43(y^{1/3}-y^{-1}).
\]
The error is bounded because the $O(m^{-1/3})$ term in the
inverse estimate is multiplied by $t-2=O(N^{1/4})$.
The function $f$ has its unique nondegenerate minimum at
$a_*=(2/3)^{3/4}$, with $f(a_*)=C_*$ and $g(a_*)=D_*$.
On the fixed compact interval, the perturbed expression converges
uniformly to $f$, so every minimizer lies near $a_*$.
There $f''$ is bounded away from zero. The derivative equation
first gives $y-a_*=O(\epsilon)$ and then
$y=a_*+\epsilon g'(a_*)/f''(a_*)+O(\epsilon^2)$.
Taylor expansion yields
\[
 \min_y\{f(y)-\epsilon g(y)+\epsilon^2h(y)\}
 =C_*-D_*\epsilon+
 \left(h(a_*)-\frac{g'(a_*)^2}{2f''(a_*)}\right)\epsilon^2
 +O(\epsilon^3).
\]
Here the minimum may be taken on any fixed compact interval
containing all the relevant $y$ and $a_*$ in its interior.
Direct evaluation gives
$h(a_*)-g'(a_*)^2/(2f''(a_*))=-\frac{13}{6}(2/3)^{1/4}=-E_+$.
Since $N^{3/4}\epsilon^3=1$, this proves the bound; an absolute
constant covers the remaining orders.

For sharpness, let $B$ be the incidence graph of
$\operatorname{PG}(2,q)$, where $q=11^{2j+1}$, set
$t=(3q-5)/2$, and take $\mathbb K=\mathbb F_5$.
Then $q\equiv1\pmod5$, $q\equiv3\pmod4$, and $t\equiv2\pmod6$.
With $N=(q^3+2q^2+2q+1)t$, \eqref{eq:projective-rank} gives
\[
 \begin{aligned}
 \rho_{\mathbb F_5}(\Gamma(B,t))
 &=4q^3-4q^2-7q+1\\
 &=C_*N^{3/4}-D_*N^{1/2}
   -\frac{17}{8}(2/3)^{1/4}N^{1/4}+O(1).
 \end{aligned}
\]
This example proves the stated sharpness of $C_*$ and $D_*$.
It also gives the displayed third-order correction for this sequence.
\end{proof}

Near equality also forces geometric regularity.

\begin{corollary}[Stability and a geometric criterion for optimality]

Let $H_j=\Gamma(B_j,t_j)$ be triangle-free complete ordered
incidence lifts of nonempty simple bipartite graphs $B_j$, with
$N_j=|V(H_j)|\to\infty$. After discarding isolated
vertices, write $B_j=(L_j\cup R_j,E_j)$, $m_j=|E_j|$, and
$k_j=|L_j|+|R_j|$. Put $a_*=(2/3)^{3/4}$.
The asymptotic relation
\[
 \rho_{\mathbb K_j}(H_j)=(C_*+o(1))N_j^{3/4}
\]
for some sequence of fields $\mathbb K_j$ is equivalent to
\[
 m_j=(a_*+o(1))N_j^{3/4},\qquad
 k_j=(2+o(1))m_j^{2/3}.
\]
Under these equivalent conditions, the rank asymptotic is uniform
over every coefficient field. One component contains all but
$o(m_j^{2/3})$ vertices and
$o(m_j)$ edges. Both parts of $B_j$, and of that component,
have $(1+o(1))m_j^{2/3}$ vertices.
Deleting the vertices of original degree at most two removes
only $o(m_j^{2/3})$ vertices and edges and leaves a $C_4$-free
induced graph.

Put $a_j=|L_j|$ and $b_j=|R_j|$, and let $d(v)$ be the degree
of $v$ in $B_j$ before this deletion. Then
\[
 \sum_{u\in L_j}\left(d(u)-\frac{m_j}{a_j}\right)^2
 +\sum_{v\in R_j}\left(d(v)-\frac{m_j}{b_j}\right)^2
 =o(m_j^{4/3}).
\]
For every fixed $\varepsilon>0$, all but $o(m_j^{2/3})$
vertices have degree between $(1-\varepsilon)m_j^{1/3}$ and
$(1+\varepsilon)m_j^{1/3}$. All but $o(m_j^{4/3})$ pairs
in either part have exactly one common neighbor. Let $e_{B_j}(S,T)$
count the edges between $S$ and $T$. Then
\[
 \max_{S\subseteq L_j,\,T\subseteq R_j}
 \left|e_{B_j}(S,T)-\frac{m_j|S||T|}{a_jb_j}\right|
 =o(m_j).
\]

For a quantitative version, omit the index $j$. Suppose
$\rho_{\mathbb K}(H)\le(C_*+\varepsilon)N^{3/4}$ for one field,
where $0\le\varepsilon\le1$, and put $\eta=\varepsilon+N^{-1/4}$.
For any field $\mathbb L$, write
$r_{\mathbb L}=\operatorname{rank}_{\mathbb L}(A_{D_B}-I)$. Then
\[
 \begin{aligned}
 m&=a_*N^{3/4}\bigl(1+O(\sqrt\eta)\bigr),&
 t&=a_*^{-1}N^{1/4}\bigl(1+O(\sqrt\eta)\bigr),\\
 k&=2m^{2/3}\bigl(1+O(\eta)\bigr),&
 r_{\mathbb L}&=2m^{2/3}\bigl(1+O(\eta)\bigr).
 \end{aligned}
\]
All implied constants are absolute and uniform in the fields.
\end{corollary}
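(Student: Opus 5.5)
The plan is to redo the argument in the proof of Theorem~\ref{thm:methodconstant}, but to keep track of where its inequalities are nearly tight. Let $R=\rho_{\mathbb K}(H)\le(C_*+\varepsilon)N^{3/4}$. Exactly as in that proof, for large $N$ we have $t\ge3$, $m\asymp N^{3/4}$ and $t\asymp N^{1/4}$. The two inputs are again
\[
 R\ge m+(t-2)r_{\mathbb K}
 \qquad\text{and}\qquad
 r_{\mathbb K}\ge 2m^{2/3}-O(m^{1/3}),
\]
the second coming from Lemma~\ref{lem:rank-edges}. Write $y=m/N^{3/4}$ and $f(y)=y+2y^{-1/3}$. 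Combining the two inputs gives $f(y)\le C_*+O(\eta)$. Since $f$ has a nondegenerate minimum at $a_*$, this yields $y=a_*+O(\sqrt\eta)$, which is the estimate for $m$; the estimate for $t=N/m$ follows at once.

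Next, set $r_{\mathbb K}=2m^{2/3}(1+\theta)$ with $\theta\ge -O(m^{-1/3})$. Feeding this back into the rank inequality gives $(t-2)\,2m^{2/3}\theta=O(\eta N^{3/4})$, so $\theta=O(\eta)$.

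To pass to $k$ and to other fields $\mathbb L$, I will use the chain of inequalities
\[
 2m^{2/3}(1-O(m^{-1/3}))\le r_{\mathbb L}\le k-c\le k .
\]
The lower end holds for every field by Lemma~\ref{lem:rank-edges}, and the middle inequality is Theorem~\ref{thm:baseexact}. For the upper end, I use $R\ge m+(t-2)r_{\mathbb K}$ together with $r_{\mathbb K}\ge k-c-z_{\mathbb K}$ and $z_{\mathbb K}\le c$. This gives $k-2c=2m^{2/3}(1+O(\eta))$.

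Controlling $c$ is the first real step. Each component of $B$ satisfies $m_i\le\Phi(k_i)$. Summing these bounds, the convexity of $\Phi$ forces the total $\sum_i\Phi(k_i)\ge m=(k/2)^{3/2}(1-O(\eta))$ to be concentrated on a single component. The small components then carry only $o(m)$ edges, and also contribute $o(m^{2/3})$ vertices, because their total vertex count is pinned by $k\le 2m^{2/3}(1+O(\eta))+2c$. This gives $c=o(m^{2/3})$, hence $k=2m^{2/3}(1+O(\eta))$ and $r_{\mathbb L}=2m^{2/3}(1+O(\eta))$ for every field $\mathbb L$.

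The converse direction of the equivalence uses the upper bound $R\le m+(t-1)k$ from Lemma~\ref{lem:lift} and the lower bound just described. Both evaluate to $m+2tm^{2/3}+o(N^{3/4})=(f(a_*)+o(1))N^{3/4}$, and neither depends on the field, which is where the uniformity comes from.

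For the structural statements, remove the vertices of degree at most two. Since $k\le r+2c+\ldots$, near-equality in Reiman's bound \eqref{eq:reiman-total} together with $m\le\Phi(k')+2(k-k')$ forces $k-k'=o(m^{2/3})$, and the resulting core is $C_4$-free by Lemma~\ref{lem:transitive}. On the giant $C_4$-free component, I will use the rectangular bounds $m\le a\sqrt b+b$ and $m\le b\sqrt a+a$ with $a+b=(2+o(1))m^{2/3}$. These force $a,b=(1+o(1))m^{2/3}$, since otherwise $a\sqrt b+b$ is too small.

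The degree-variance statement comes from the double count behind Reiman's inequality:
\[
 \sum_{v\in R}\binom{d(v)}2\le\binom a2 .
\]
By Cauchy--Schwarz the left side is at least $m^2/(2b)-m/2$, and the inequality is tight up to $o(m^{4/3})$. The slack in this step is exactly $\tfrac12\sum_v(d(v)-m/b)^2$, which gives the displayed sum of squares (and similarly on the other side). The same slack counts the pairs in $L$ with no common neighbour, since in a $C_4$-free graph every pair has at most one; so all but $o(m^{4/3})$ pairs have exactly one. Adding back the deleted low-degree vertices changes these counts by lower-order amounts. Chebyshev's inequality then yields the statement that almost all degrees lie between $(1\pm\varepsilon)m^{1/3}$.

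For the discrepancy bound I would apply the expander mixing lemma, using the second singular value of $N$. The codegree estimate gives
\[
 NN^{\mathsf T}=D+J-I+E ,
\]
where $D$ is the diagonal of degrees and the error $E$ has $o(m^{4/3})$ nonzero entries. Bounding $\|E\|$ through its Frobenius norm gives $\|E\|_F=o(m^{2/3})$, so $\sigma_2(N)^2=o(m^{2/3})$ after the degree irregularity is handled. Mixing then yields
\[
 |e(S,T)-m|S||T|/(ab)|\le\sigma_2\sqrt{|S||T|}=o(m^{1/3})\cdot m^{2/3}=o(m).
\]

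I expect the main obstacle to be the connectivity step, namely bounding $c$ with the correct $O(\eta)$ dependence rather than merely $o(1)$. The other delicate point is making the spectral discrepancy argument robust when the degrees are not exactly regular. There I will subtract the rank-one principal part built from the actual degree vectors, not from the uniform $J$.
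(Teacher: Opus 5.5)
The gap is in the step that bounds the number of components $c$. The $O(\eta)$ estimates for $k$ and $r_{\mathbb L}$, the dominant component, and all later structure depend on that step.

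\textbf{What is sound.} Your estimates for $y=m/N^{3/4}$ and for $\theta$, the converse direction, and the core, degree-variance and codegree arguments agree with the paper. Your two alternatives are also sound. For the part sizes you need both rectangular bounds, since $m\le a\sqrt b+b$ alone allows $a/(a+b)$ up to $(1+\sqrt5)/4$. For the discrepancy, spectral mixing with $\sigma_2(N)^2\le m/a+o(m^{2/3})$ replaces the appeal to Keevash--Sudakov--Verstra\"ete, once degree deviations are controlled by the variance bound.

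\textbf{The gap.} From Theorem~\ref{thm:baseexact} you keep only $r_{\mathbb K}\ge k-2c$. This gives just the upper bound $k-2c\le 2m^{2/3}(1+O(\eta))$, and it cannot see single-edge components, since these have $k_i-2=0$.
\begin{itemize}
\item Take a near-extremal giant component together with $m^{0.9}$ disjoint edges. It satisfies that inequality and $m_i\le\Phi(k_i)$ componentwise, yet $c$ and $k-2m^{2/3}$ are of order $m^{0.9}$. Only the exact rank rules it out, because each such edge contributes $r_i=1$.
\item The concentration step is circular. The relation $m=(k/2)^{3/2}(1-O(\eta))$ presupposes $k\le 2m^{2/3}(1+O(\eta))$, which is what you are proving. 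The ``pinning'' $k\le 2m^{2/3}(1+O(\eta))+2c$ contains $c$ itself.
\item Even granting $c=o(m^{2/3})$, you would only get $k=2m^{2/3}(1+o(1))$, not the stated $O(\eta)$ rate.
\item The componentwise bound $m_i\le\Phi(k_i)$ fails for small admissible components such as $C_4$ and $K_{2,3}$. What holds is $m_i\le\Phi(k_i)+2k_i$.
\end{itemize}

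\textbf{The missing idea.} Run the concentration argument on the component ranks $r_i$, whose sum is $r_{\mathbb K}=2m^{2/3}(1+O(\eta))$.
\begin{itemize}
\item Every component has $r_i\ge1$ and $k_i\le r_i+2\le 3r_i$.
\item The proof of Lemma~\ref{lem:rank-edges} gives $m_i\le\Phi(r_i+2)$ when $k_i\ge26$, and $m_i\le 7r_i$ otherwise.
\item Summing over components gives $\sum_i(r_i/r_{\mathbb K})^{3/2}\ge 1-O(\eta)$. This sum is at most $\sqrt{\max_i r_i/r_{\mathbb K}}$, so $r_{\mathbb K}-\max_i r_i=O(\eta r_{\mathbb K})$.
\item Hence $c\le 1+O(\eta m^{2/3})$. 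The other components carry $O(\eta m^{2/3})$ vertices and $O(\eta^{3/2}m)$ edges.
\item Then $k=r_{\mathbb K}+z_{\mathbb K}+c=2m^{2/3}(1+O(\eta))$.
\item Your chain $2m^{2/3}(1-O(m^{-1/3}))\le r_{\mathbb L}\le k-c$ then gives the estimate over every field.
\end{itemize}
With this repair, the rest of your plan goes through.
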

\begin{proof}
We first prove the quantitative parameter estimates, then recover
the global structure from the $C_4$-free core.
Assume the quantitative rank hypothesis and put
$r=\operatorname{rank}_{\mathbb K}(A_{D_B}-I)$.
The proof of Theorem~\ref{thm:methodconstant} gives
$m\asymp N^{3/4}$ and $t\asymp N^{1/4}$, with absolute constants;
bounded orders are absorbed by enlarging the error constants.
Set $y=m/N^{3/4}$ and $u=r/(2m^{2/3})-1$.
For large $N$, Lemma~\ref{lem:rank-edges} gives
$u\ge-O(N^{-1/4})$, while $r=O(m^{2/3})$ and
$\rho_{\mathbb K}(H)\ge m+(t-2)r$ imply
\[
 C_*+\varepsilon
 \ge f(y)+2y^{-1/3}u-O(N^{-1/4}),
 \qquad f(y)=y+2y^{-1/3}.
\]
Since $f\ge C_*$, this gives $u=O(\eta)$ and
$f(y)-C_*=O(\eta)$. The unique minimizer $a_*$ is
nondegenerate, and $y$ stays in a compact subinterval of
$(0,\infty)$; hence $y=a_*+O(\sqrt\eta)$.
This proves the assertions about $m,t,r$.

For the component ranks $r_i$, the componentwise edge bound gives
\[
 \sum_i(r_i/r)^{3/2}\ge1-O(\eta),\qquad
 \sum_i(r_i/r)^{3/2}\le\sqrt{\max_i(r_i/r)}.
\]
Consequently $r'=r-\max_i r_i=O(\eta r)$.
Since $r_i\ge1$ and $k_i\le3r_i$, all other components
have $O(\eta m^{2/3})$ vertices and
$O(\eta^{3/2}m)$ edges. Also $c\le1+r'$, so the exact
base formula gives $k=r+O(c)=2m^{2/3}(1+O(\eta))$
and the same shifted-rank estimate over every field.
The incidence upper bound now yields
$\rho_{\mathbb L}(H)\le(C_*+O(\eta))N^{3/4}$ uniformly in
$\mathbb L$, and Theorem~\ref{thm:methodconstant} supplies
the matching lower estimate.

Along the given sequence take
$\varepsilon_j=\max\{0,\rho_{\mathbb K_j}(H_j)/N_j^{3/4}-C_*\}=o(1)$.
The quantitative hypothesis is then satisfied with nonnegative
$\varepsilon_j$, proving the forward implication and the
dominant-component assertion.
Conversely, the geometric conditions give
$\rho_{\mathbb K}(H_j)\le m_j+t_jk_j
=(C_*+o(1))N_j^{3/4}$ uniformly in $\mathbb K$.
Together with Theorem~\ref{thm:methodconstant}, this proves
the equivalence and uniformity.

Now delete from the entire base the vertices of original degree
at most two and any resulting isolated vertices. By
Lemma~\ref{lem:transitive}, the remaining graph $C_j$ is $C_4$-free
and has $k_j'$ vertices and
$m_j'\ge m_j-2k_j\sim m_j$ edges. By
\eqref{eq:reiman-total}, $k_j'\ge(2-o(1))m_j^{2/3}$.
Therefore $k_j-k_j'=o(k_j)$, and deleting the original low-degree
vertices removes only $o(k_j)$ edges.

The core has average degree $2m_j'/k_j'\sim\sqrt{k_j'/2}$.
By Keevash et al.\ \cite[Theorem~5.1]{KSV}, with the cycle-length
parameter $\ell=2$,
\[
 e_{C_j}(S',T')=
 \frac{4m_j'}{(k_j')^2}|S'||T'|+o(m_j)
\]
uniformly over subsets of its two parts. Taking the full parts,
of sizes $a',b'$, gives $4a'b'/(k_j')^2\to1$. Thus
$a',b'\sim k_j'/2\sim m_j^{2/3}$, and restoring $o(k_j)$
vertices gives $a_j,b_j\sim m_j^{2/3}$, also for the dominant
component. The displayed density differs from $m_j/(a_jb_j)$
by $o(m_j/k_j^2)$. Restoring vertices changes every subset-size
product by $o(k_j^2)$, and restoring edges changes every cut
by $o(k_j)$, proving the uniform cut estimate in $B_j$.

Suppress $j$, and put $k=k_j$, $m=m_j$, $a=a_j$, $b=b_j$.
Cauchy--Schwarz and $C_4$-freeness give
\[
 \frac12\left(\frac{(m_j')^2}{a'}-m_j'\right)
 \le\sum_{u\in L(C_j)}\binom{d_{C_j}(u)}2
 \le\binom{b'}2.
\]
Both endpoints are $(1/2+o(1))m^{4/3}$. Thus all but
$o(m^{4/3})$ pairs in either part of $C_j$ have exactly one
common neighbor. Each restored edge belongs to at most $2k$
paths of length two; restoring $o(k)$ edges changes only
$o(k^2)$ such paths. Only $o(k^2)$ pairs touch a restored vertex,
which proves the common-neighbor assertion in $B_j$. Also,
$\sum_{u\in L_j}\binom{d(u)}2\le\binom b2+o(k^2)$, so
\[
 0\le\sum_{u\in L_j}\left(d(u)-\frac ma\right)^2
 \le b(b-1)+m-\frac{m^2}{a}+o(k^2)=o(m^{4/3}).
\]
The same argument on $R_j$ proves the variance assertion;
Chebyshev's inequality then gives the concentration of degrees.
\end{proof}

\section{Graph parameters and applications}

Recall that our definition of $\msr(G)$ allows edge entries to
vanish. The faithful convention additionally requires nonzero
entries at every edge.
Rosenfeld's almost-orthogonality bound, recalled by
Balla et al.\ \cite[Section~1.2]{BallaLetzterSudakov}, gives
$\msr(G)\ge\lceil |V(G)|/2\rceil$ for triangle-free $G$.
The same bound applies to the weak convention used here, including
Hermitian Gram matrices: the nonzero off-diagonal entries of an
admissible Gram matrix define a spanning subgraph of $G$, which is
still triangle-free. Apply Deaett's bound~\cite{Deaett} on each
nontrivial component of this support graph; isolated vertices
contribute one-dimensional diagonal blocks.

A matching is a set of edges with no common endpoints; it is perfect
if it covers every vertex. Let $\mu(F)$ be the largest size of a matching.
Combining the classical semidefinite-rank bound with the
Tutte--Berge formula~\cite{Berge} gives the following consequence.

\begin{corollary}[A consequence of Rosenfeld--Deaett and Tutte--Berge]
\label{prop:matchingmsr}
For every triangle-free graph $F$ of order $n$, including graphs
with isolated vertices,
\[
 \msr(F)=n-\mu(F)=\chi(\overline F).
\]
The same value is obtained if complex Hermitian Gram matrices are allowed.
\end{corollary}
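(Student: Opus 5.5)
The plan is to prove the two equalities $\msr(F)=n-\mu(F)$ and $n-\mu(F)=\chi(\overline F)$ separately, using the Rosenfeld--Deaett lower bound recalled just above together with an explicit construction.

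\emph{Upper bound.} Take a maximum matching $M$ of $F$, so the vertex set splits into $\mu(F)$ matched pairs and $n-2\mu(F)$ unmatched vertices. For each matched edge $xy$, assign both $x$ and $y$ the same unit vector. Give every unmatched vertex its own unit vector, and make all vectors belonging to different classes mutually orthogonal. The resulting Gram matrix has unit diagonal and nonzero off-diagonal entries only on edges of $M\subseteq E(F)$. Under our weak convention, edge entries are allowed to vanish, so this matrix is admissible. Its rank is the number of classes, $\mu(F)+(n-2\mu(F))=n-\mu(F)$.

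The same partition shows $\chi(\overline F)\le n-\mu(F)$: matched pairs are edges of $F$, hence independent in $\overline F$. Conversely, since $F$ is triangle-free, every independent set of $\overline F$ is a clique of $F$ and so has at most two vertices. A proper colouring of $\overline F$ with $\chi$ colours therefore uses some $j$ classes of size two, which form a matching in $F$. Then $n\le 2j+(\chi-j)$, so $\chi\ge n-j\ge n-\mu(F)$. This gives $\chi(\overline F)=n-\mu(F)$.

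\emph{Lower bound.} This is the main step. Let $G$ be an admissible real or Hermitian Gram matrix of rank $\msr(F)$, and let $S$ be its support graph on the off-diagonal nonzeros. As noted before the statement, $S$ is a triangle-free spanning subgraph of $F$. Apply the Deaett/Rosenfeld bound $\msr\ge\lceil n_i/2\rceil$ to each nontrivial component $S_i$ of $S$, and count $1$ for each isolated vertex. Because $G$ is block diagonal along the components of $S$, its rank is the sum of the component ranks. Hence
\[
 \rk G\ \ge\ \sum_i\lceil n_i/2\rceil+\#\{\text{isolated vertices of }S\}.
\]

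It remains to show that this sum is at least $n-\mu(F)$. A cheap attempt fails: $\lceil n_i/2\rceil$ corresponds to a perfect or near-perfect matching of $S_i$, which may not exist. So I will use the Tutte--Berge formula in the form
\[
 n-\mu(F)=\max_{T\subseteq V}\bigl(\operatorname{odd}(F-T)+\tfrac{n-|T|-\operatorname{odd}(F-T)}{2}\bigr)
\]
(equivalently $\mu=\min_T\frac12(n+|T|-\operatorname{odd}(F-T))$), and compare against a maximizing $T$.

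For a Tutte--Berge set $T$, consider the principal submatrix of $G$ on $V\setminus T$; the rank of $G$ is at least its rank. Its support graph is a subgraph of $F-T$, so it has no edges between different components of $F-T$ and is block diagonal over those components. Applying the Rosenfeld--Deaett bound to each block, each component $K$ contributes at least $\lceil |K|/2\rceil$. This again requires running the argument over the components of the support restricted to $K$: ceilings are superadditive-friendly, since $\sum\lceil n_i/2\rceil\ge\lceil\sum n_i/2\rceil$. Summing over the components of $F-T$,
\[
 \sum_K\lceil|K|/2\rceil=\frac{n-|T|+\operatorname{odd}(F-T)}{2}=n-\mu(F).
\]
The last equality is exactly the Tutte--Berge identity for the optimal $T$, which closes the lower bound.

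For the Hermitian variant, I only need that Deaett's bound and the Gram-matrix rank additivity hold over $\mathbb C$. The upper construction is real, so it carries over unchanged.
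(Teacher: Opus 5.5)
Your proposal is correct and follows the paper's proof. The upper bound uses the same orthogonal construction built from a maximum matching. The lower bound uses the same principal submatrix on $V\setminus T$, block diagonal over the components of $F-T$, with the Rosenfeld--Deaett bound applied to each block and $T$ chosen by Tutte--Berge. Your preliminary estimate over the components of the support graph of the whole matrix is not needed. Your colour-class argument for $\chi(\overline F)$ is an expanded version of the paper's remark that every clique of a triangle-free graph has at most two vertices.
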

\begin{proof}
A maximum matching and its unmatched vertices partition $F$
into $n-\mu(F)$ cliques. Assigning one orthogonal unit vector to
each part gives the upper bound, since zeros on edges are allowed.
For the reverse inequality, let $M$ be an admissible real or
Hermitian Gram matrix and delete any $S\subseteq V(F)$. Let
$\operatorname{odd}(F-S)$ denote the number of odd-order components
of the remaining graph. Applying the Rosenfeld--Deaett bound to
its components $F_i$ gives
\[
 \rk M\ge\sum_i\left\lceil\frac{|V(F_i)|}{2}\right\rceil
 =\frac{n-|S|+\operatorname{odd}(F-S)}2.
\]
The Tutte--Berge formula~\cite{Berge} maximizes the right side
to $n-\mu(F)$. Finally, every clique of a triangle-free graph
is an edge or a vertex, so $\chi(\overline F)=n-\mu(F)$.
\end{proof}

\begin{theorem}[Hamilton cycles, matchings and semidefinite ranks]
\label{thm:exactmsr}
If $q\ge3$ is a prime power and $t\ge3$, then $G(q,t)$ is
Hamiltonian. For every prime power $q\ge2$ and integer $t\ge2$,
\[
 \mu(G(q,t))=\left\lfloor\frac{q^3t}{2}\right\rfloor,
 \qquad
 \msr(G(q,t))=\left\lceil\frac{q^3t}{2}\right\rceil.
\]
If $q,t$ are odd and $t\ge3$, then $G(q,t)$ is factor-critical:
deleting any vertex leaves a perfect matching. For $t=1$, the
graph is edgeless and $\msr(G(q,1))=q^3$.
\end{theorem}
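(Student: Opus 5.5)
The argument has three parts: a directed Hamilton cycle in $D_{B_q}$ feeding into Lemma~\ref{lem:hamilton-lift}, a matching count, and then Corollary~\ref{prop:matchingmsr}.

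\emph{Hamiltonicity.} First I will show that $D_{B_q}$ has a directed Hamilton cycle when $q\ge3$, and then apply Lemma~\ref{lem:hamilton-lift} with $m=q^3\ge3$. To build the cycle I will use the large symmetry group of the affine plane. Translations $(x,y)\mapsto(x,y+\beta)$ and the maps $(x,y)\mapsto(x+\gamma,y+a\gamma)$ act on flags and preserve arcs, and so do scalings. The first goal is a short directed cycle of flags with the following property: its orbit under a cyclic group of arc-preserving automorphisms covers all $q^3$ flags. I will then glue the translate cycles together by a standard switching at pairs of parallel arcs. Concretely, I will look for two translate cycles containing arcs $e\to f$ and $e'\to f'$ such that $e\to f'$ and $e'\to f$ are also arcs; exchanging these merges the two cycles. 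Gluing successive cosets in this way gives one cycle.

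An alternative is to apply a directed Hamiltonicity criterion for dense regular digraphs, such as Jackson's theorem or a Cayley-digraph argument. In-degree and out-degree are both $(q-1)^2$ by the affine lemma, and there are $q^3$ vertices. This density is too low for a Dirac-type bound, so I would rely on the explicit coset construction. \textbf{This step is the main obstacle.} The small case $q=3$ may need a direct check.

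\emph{Matchings.} For $t\ge2$ I will exhibit near-perfect matchings. The planned route is to use layer pairs. Between layers $i<j$, the bipartite graph joining copies of $D_{B_q}$ is $(q-1)^2$-regular, so by Hall's theorem it has a perfect matching. For even $t$, pairing the layers as $(1,2),(3,4),\dots$ gives a perfect matching.

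For odd $t\ge3$, the Hamilton cycle from the first part already gives a matching of size $\lfloor q^3t/2\rfloor$ when $q\ge3$. For $q=2$ and odd $t$, I will instead match layers $4,\dots,t$ in pairs. The first three layers, with $3\cdot 8=24$ vertices, can be handled by a direct construction: for example, a perfect matching built from the 2-regular structure of $D_{B_2}$, which is the union of two directed 4-cycles, together with the Hamiltonian-type path from Lemma~\ref{lem:hamilton-lift}. The condition $m$ odd fails here, so this needs a direct check. Once $\mu(G(q,t))=\lfloor q^3t/2\rfloor$ is established, Corollary~\ref{prop:matchingmsr} immediately gives $\msr=q^3t-\mu=\lceil q^3t/2\rceil$. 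For $t=1$ the graph is edgeless, so $\msr=q^3$.

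\emph{Factor-criticality.} For $q$ and $t$ odd with $t\ge3$, the order $q^3t$ is odd, and Lemma~\ref{lem:hamilton-lift} with $m=q^3$ odd supplies a Hamilton cycle. Deleting any vertex of an odd Hamilton cycle leaves a Hamilton path on an even number of vertices. That path contains a perfect matching, which gives factor-criticality.
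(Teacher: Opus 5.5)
\textbf{Gap: the directed Hamilton cycle in $D_{B_q}$.} Your overall architecture matches the paper's: a directed Hamilton cycle in $D_{B_q}$, then Lemma~\ref{lem:hamilton-lift}, then Corollary~\ref{prop:matchingmsr}. Your Hall argument for even $t$ is also correct, since the bipartite graph between two layers is $(q-1)^2$-regular. The gap is the step you flag as the main obstacle, which you never carry out. For $q\ge3$, the Hamiltonicity claim, the matching count for odd $t$, and factor-criticality all rest on it. The coset-gluing plan is not yet an argument. You do not specify the cyclic group, the short cycle, or the crossing arcs, and you do not check that successive switches remain compatible. Its most natural version also fails for $q=p^k$ with $k\ge2$. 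The central translations $(x,y)\mapsto(x,y+\beta)$ form an elementary abelian group of order $q$, so any cyclic group of them has order $p$. Likewise, a tour lifted with a fixed nonzero $y$-increment closes after $p$ rounds, not $q$. This is why the explicit tour in the proof of Proposition~\ref{prop:all-orders-parameter} uses only prime $q$.

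\textbf{The missing idea.} $D_{B_q}$ is a Cayley digraph of a $p$-group. Encode the flag consisting of the point $(x,y)$ and the line of slope $a$ through it as $(a,x,y)\in\F_q^3$. With the product $(a,x,y)(b,h,z)=(a+b,x+h,y+z+ah)$, these triples form a group $U_q$ of order $q^3$. Put $s(b,h)=(b,h,bh)$ with $b,h\ne0$. Right multiplication by $s(b,h)$ sends a flag to $(a+b,x+h,y+(a+b)h)$. The new line has a different slope and passes through the old point, and the new point is a different point of that line. So these are exactly the out-neighbours in $D_{B_q}$. For $q>2$, every element of $\F_q$ is a difference of two nonzero elements. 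Hence the products $s(b,h)s(b',h)^{-1}=(b-b',0,0)$ and $s(b,h)^{-1}s(b,h')=(0,h'-h,0)$ give both coordinate subgroups. Their commutators give the centre, so the connection set generates $U_q$. Witte's theorem \cite{Witte} then gives a directed Hamilton cycle for every prime power $q\ge3$, with no separate check at $q=3$.

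\textbf{The case $q=2$.} Your concern here comes from a misreading of Lemma~\ref{lem:hamilton-lift}: oddness of $m$ is needed only for the interpolation statement. Apply the lemma's first assertion to each directed $4$-cycle of $D_{B_2}$. For $t\ge3$, this shows that $G(2,t)$ is a disjoint union of two Hamiltonian graphs of even order $4t$, so it has a perfect matching.
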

\begin{proof}
Identify a flag with $(a,x,y)\in\F_q^3$, representing the point
$(x,y)$ and its incident line of slope $a$. With multiplication
\[
 (a,x,y)(b,h,z)=(a+b,x+h,y+z+ah),
\]
these triples form the upper-unitriangular group $U_q$ of order
$q^3$. Put $s(b,h)=(b,h,bh)$ and
$\Sigma_q=\{s(b,h):b,h\ne0\}$.
The right Cayley digraph on $(U_q,\Sigma_q)$ has arcs $g\to gs$
for $s\in\Sigma_q$.
Right multiplication by $s(b,h)$ sends a flag to
$(a+b,x+h,y+(a+b)h)$: its line passes through the old point,
and both its slope and its point change. Thus $D_{B_q}$ is
the right Cayley digraph of $(U_q,\Sigma_q)$.

For $q>2$, the identities
\[
 s(b,h)s(b',h)^{-1}=(b-b',0,0),\qquad
 s(b,h)^{-1}s(b,h')=(0,h'-h,0)
\]
show that $\Sigma_q$ generates both coordinate subgroups,
because every field element is a difference of nonzero elements.
Their commutators give all $(0,0,ax)$, so they generate $U_q$.
By Witte's theorem \cite{Witte}, the directed base has a Hamilton
cycle $e_i\to e_{i+1}$, with indices modulo $m=q^3$.
Lemma~\ref{lem:hamilton-lift} now proves Hamiltonicity for $t\ge3$
and hence gives a matching of size $\lfloor mt/2\rfloor$.
When $mt$ is odd, deleting any vertex from this Hamilton cycle
leaves an even path, proving factor-criticality.
For $t=2$, the edges $(e_i,1)(e_{i+1},2)$ form a perfect matching.
This also completes the Hamiltonicity assertion in
Theorem~\ref{thm:affine}.

For $q=2$, the sole generator $s(1,1)$ has order four, so
$D_{B_2}$ is the disjoint union of two directed four-cycles.
Each component gives a Hamiltonian lift of even order $4t$
when $t\ge3$, and the same matching works for $t=2$.
Thus $G(2,t)$ has a perfect matching for every $t\ge2$.
The semidefinite-rank assertions follow from
Corollary~\ref{prop:matchingmsr} and triangle-freeness.
\end{proof}

A graph is \emph{uniquely colorable} if its partition into
$\chi(G)$ independent color classes is unique up to permuting colors
\cite{HHR}; graph automorphisms do not identify different partitions.
The exact coloring formula from Theorem~\ref{thm:affine}
allows us to characterize uniqueness without further spectral analysis.

\begin{corollary}[Unique optimal colorings]
For every prime power $q\ge2$ and integer $t\ge1$, the graph $G(q,t)$
is uniquely colorable if and only if $t<q$. When $t<q$, every maximum
independent set is a full layer, and every optimal coloring is the layer
coloring up to permutation. For $q\ge4$ and $3\le t<q$, these graphs
are also Hamiltonian and triangle-free.
\end{corollary}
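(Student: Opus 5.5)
The case split is $t<q$ versus $t\ge q$, using the exact values $\alpha(G(q,t))=\max\{q^3,tq^2\}$ and $\chi(G(q,t))=\min\{q,t\}$ from Theorem~\ref{thm:affine}. The main input for the case $t<q$ is the equality clause of Proposition~\ref{prop:independence-endpoints}, applied with $\delta=q$.

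\emph{Case $t<q$: the layer coloring is the only optimal coloring.} Here $\chi=t$ and $\alpha=q^3$, and $|V|=tq^3$. An optimal coloring uses $t$ classes whose sizes sum to $tq^3$ and are each at most $q^3$, so every class is a maximum independent set. First I classify the maximum independent sets. For $t=1$ the whole graph is one layer, so there is nothing to prove. For $2\le t<q$, Proposition~\ref{prop:independence-endpoints} says every maximum independent set contains exactly one copy of each flag, placed by a layer map $\ell$ that is weakly decreasing along arcs. Such a map is constant on each strongly connected component of $D_{B_q}$. By Theorem~\ref{thm:exactmsr}, $D_{B_q}$ is a Cayley digraph, and for $q\ge3$ it has a directed Hamilton cycle, so it is strongly connected. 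Hence $\ell$ is constant, and every maximum independent set is a full layer. (When $q=2$ the condition $t<q$ forces $t=1$.) A partition of the vertex set into $t$ full layers must be the layer partition, so uniqueness follows. This classification of maximum independent sets is the main step, and it rests entirely on strong connectivity of the base digraph together with the cited equality clause.

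\emph{Case $t\ge q$: uniqueness fails.} Here $\chi=q$. The slope coloring from the affine-base lemma is an optimal coloring. For $t\ge2$ it differs from any layer partition: a layer contains all $q$ slopes and so is not a slope class. Since $t\ge q$, the layer coloring would have $t$ classes, which equals $q$ only when $t=q$. So for $t>q$ the layer coloring is not optimal. I still need a second optimal coloring distinct from the slope coloring.
\begin{itemize}
\item For $t=q$, the layer coloring has $q$ classes and is itself a second optimal coloring.
\item For $t>q$, I will modify the slope coloring directly. Choose slopes $a\ne a'$ and swap their colors on the top layer only. This is still proper if no edge joins a slope-$a$ vertex in layer $t$ to a slope-$a'$ vertex in another layer. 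That fails, because arcs exist between flags of any two distinct slopes.
\end{itemize}
So for $t>q$ I instead apply the slope coloring with a different slope-to-color assignment on each layer. Edges between layers $i<j$ join flags of distinct slopes, and nonadjacency within a slope class holds only under a consistent labeling across layers. This suggests the two-layer rule: for every pair of layers, two flags of different slope can be adjacent. So I would settle this case differently and conclude that the slope coloring and its relabelings are essentially the only options here, and the counterexample needs another construction.

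The simplest candidate is to recolor by the permuted slope $a\mapsto a+c$ on the first layer only. This is proper provided there is no arc from a slope-$a$ flag to a slope-$(a+c)$ flag, and that fails too. The concrete fallback I would try next is to count: for $t>q$ every color class of an optimal coloring has size exactly $tq^2$, which equals $\alpha$, so each class is a maximum independent set. I would then exhibit, via \eqref{eq:independence-endpoints}, a maximum independent set with $U\ne V$ that is not a slope class. Any coloring containing it as a class differs from the slope coloring, though one must still show the remaining vertices can be split into $q-1$ further maximum independent sets; this is the step I expect to be hardest.

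Finally, the closing assertion about $q\ge4$ and $3\le t<q$ follows directly from triangle-freeness and Hamiltonicity in Theorem~\ref{thm:affine}.
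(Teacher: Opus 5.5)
Your treatment of $t<q$ is correct and is essentially the paper's argument. It combines three ingredients: the equality clause of Proposition~\ref{prop:independence-endpoints} with $\delta=q$; strong connectivity of $D_{B_q}$; and the count showing that $t$ classes of size at most $q^3$ can cover $tq^3$ vertices only if every class is a full layer. For strong connectivity, your appeal to the directed Hamilton cycle from the proof of Theorem~\ref{thm:exactmsr} is a valid, if heavier, substitute for the paper's remark that inverses of generators are positive powers. The case $t=q$ is also fine, since the layer and slope partitions are then two distinct optimal colorings.

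The genuine gap is $t>q$. There you never produce a second optimal coloring. Your remark that the slope coloring and its relabelings are ``essentially the only options'' is false. Your fallback also leaves undone exactly the step you flag as hard: splitting the complement of a non-slope maximum independent set into $q-1$ further maximum independent sets.

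The missing idea is a second coordinate that also changes along every arc, namely the first coordinate of the point. Color every copy $(e,i)$ by the first coordinate of the point of $e$, independently of the layer. Suppose $e\to f$. Then the line of $f$ passes through the point of $e$, and the point of $f$ is a different point on that nonvertical line. Hence the two points have different first coordinates. Every edge of $G(q,t)$ joins copies of flags related by an arc, so this is a proper $q$-coloring.

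When $t\ge q$ we have $\chi=q$, so this coloring is optimal. It differs from the slope partition, because the slope class of $a$ contains the flags with point $(x,ax)$ on the line $y=ax$ for every $x\in\F_q$. This is precisely the paper's argument, and it handles $t=q$ and $t>q$ uniformly. Its color classes, each of size $tq^2$, are exactly the non-slope maximum independent sets your fallback was searching for.
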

\begin{proof}
The case $t=1$ is immediate. If $2\le t<q$, then $q>2$, and the
generation argument in the proof of Theorem~\ref{thm:exactmsr} shows
that $D_{B_q}$ is strongly connected. Indeed, in a finite group,
inverses of the generators are positive powers of the same generators.
Proposition~\ref{prop:independence-endpoints}, with $\delta=q$, implies
that every maximum independent set selects each flag once, with a
layer index that cannot increase along an arc. Strong connectivity
forces all indices to agree. Each class in an optimal $t$-coloring
has size at most $q^3$, so all $t$ classes have that size and must be
full layers.

For $t\ge q$, both the line-slope coloring and the point first-coordinate
coloring use $q$ colors. An arc changes both coordinates: distinct points
on a nonvertical line have distinct first coordinates. The two colorings
give different partitions, since flags with the same slope may have
different first coordinates. Both are optimal by
Theorem~\ref{thm:affine}, so uniqueness fails. Hamiltonicity in the
stated range follows from Theorem~\ref{thm:exactmsr}.
\end{proof}

\subsection*{The Lov\'asz number}

Write $M^*$ for the conjugate transpose of a complex matrix or vector.
A matrix is Hermitian if $M=M^*$; it is positive semidefinite, denoted
$M\succeq0$, if $x^*Mx\ge0$ for every complex vector $x$. The programs below use real symmetric
variables. Let $\langle J,Z\rangle$ denote the sum of the entries
of $Z$. For a graph $F$ with at least one vertex, the standard theta formulation
\cite{Lovasz} is
\[
 \vartheta(F)=\max\{\langle J,Z\rangle:Z\succeq0,
       \operatorname{tr}Z=1,\ Z_{uv}=0\text{ for }uv\in E(F)\}.
\]
The dual program is
\[
 \vartheta(F)=\min\{\gamma:\gamma I-J+Y\succeq0,
       \ Y=Y^{\mathsf T},\ Y_{uv}=0\text{ for }uv\notin E(F)\}.
\]
The diagonal entries of $Y$ are zero. The primal feasible set is compact, and both programs are strictly
feasible: use $I/|V(F)|$ and $(\gamma,0)$ with $\gamma>|V(F)|$.
Standard semidefinite duality therefore gives the equality of optima
and attainment used below.

We first collect the standard comparisons used in the applications.
A graph is vertex-transitive (respectively, edge-transitive) if its
automorphisms act transitively on its vertices (respectively, edges).
For every graph $F$ with at least one vertex,
$\vartheta(F)\vartheta(\overline F)\ge|V(F)|$, with equality
when $F$ is vertex-transitive, and $\vartheta(\overline F)\le\chi(F)$;
see Knuth~\cite[Sections~4, 23 and 25]{Knuth}.

Let $D$ be a loopless digraph on $m\ge1$ vertices, put
$F=F(D)$, $\eta=\vartheta(F)$ and $H=H_t(D)$, and take $t\ge1$.
The independent $t$-fold blow-up replaces each vertex by $t$
nonadjacent copies and each edge by all edges between its two classes;
the clique blow-up instead makes each class a clique.
The graph $H$ is a spanning subgraph of the first blow-up of $F$,
and its complement contains the second blow-up of $\overline F$. The two standard blow-up identities
\cite[Section~16, (16.8)--(16.9)]{Knuth}, monotonicity and layer coloring give
\begin{equation}\label{eq:theta-transfer}
 \vartheta(H)\ge\max\{m,t\eta\},\qquad
 \frac{mt}{\vartheta(H)}\le\vartheta(\overline H)
 \le\min\{t,\vartheta(\overline F)\}.
\end{equation}
With $s=\rk_{\mathbb R}[A_D-I\ \ A_D^{\mathsf T}-I]$,
Theorem~\ref{thm:rank-comparison} also gives
$\vartheta(H)/\rho(H)\ge\max\{m,t\eta\}/[m+(t-1)s]$.
In particular, if $\eta>s$, then $\vartheta(H)>\rho(H)$
for $t>(m-s)/(\eta-s)$. The triangle-free condition follows separately from Lemma~\ref{lem:lift}.

Arc transitivity means that an automorphism can send any arc to any
other. Suppose $D$ has this property, and put $A=A_D$. Symmetry averaging~\cite[Sections~2--3]{dKPS}
allows us to restrict the dual exactly to matrices
\begin{equation}\label{eq:averaged-dual}
 \gamma I_{mt}-J_t\otimes J_m+
 U\otimes A+U^{\mathsf T}\otimes A^{\mathsf T}\succeq0,
\end{equation}
where $U$ is any real strictly upper triangular $t\times t$ matrix.
Indeed, an averaged upper layer block has one value on all arcs and
vanishes elsewhere; conversely, every such $U$ has the required edge
support. Its entries need not be nonnegative.

Write $\lambda_{\min}(S)$ for the least eigenvalue of a real symmetric
matrix $S$. We first use this reduction to decide when theta equals
the size of one layer.

\begin{proposition}[Theta equality for ordered lifts]
\label{prop:general-arc-theta}
Let $D$ be a loopless digraph on $m$ vertices in which every vertex
has indegree and outdegree $d>0$.  Suppose an automorphism group of
$D$ is transitive on its arcs. Set $A=A_D$, $F=F(D)$,
$S=(A+A^{\mathsf T})/2$ and $\beta=-\lambda_{\min}(S)>0$.
Then
\[
 \vartheta(F)=\frac{m\beta}{d+\beta},\qquad
 \vartheta(H_t(D))\ge
 \max\left\{m,\frac{mt\beta}{d+\beta}\right\}
 \quad(t\ge1).
\]
Moreover, for every $t\ge2$,
\[
 \vartheta(H_t(D))=m
 \quad\Longleftrightarrow\quad
 \lambda_{\min}(A_{H_t(D)})=-d.
\]
Whenever these equivalent conditions hold,
$\alpha(H_t(D))=\vartheta(H_t(D))=m$.
Neither connectedness nor the exclusion of transitive triangles
is required for these statements.
\end{proposition}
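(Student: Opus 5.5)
The plan is to compute everything through the symmetry-reduced dual program, using two structural facts about arc-transitive digraphs. The first is that arc transitivity forces a dichotomy on opposite arcs. Having the reverse arc present is a property preserved by automorphisms, so either every arc has its reverse or none does. In the first case $A$ is symmetric, $A_F=A=S$, and $F$ is $d$-regular with least adjacency eigenvalue $-\beta$. In the second case $A_F=A+A^{\mathsf T}=2S$, and $F$ is $2d$-regular with least eigenvalue $-2\beta$. In both cases $A_F$ is a positive multiple of $S$, and the ratio of degree to $-\lambda_{\min}$ equals $d/\beta$. The second fact is that every vertex is the tail of some arc, since $d>0$. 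Hence the group is also vertex-transitive, and for the same reason symmetry averaging~\cite{dKPS} of an optimal dual matrix for $F$ produces a matrix $Y=yA_F$ that is still optimal, because the dual feasible set is convex and invariant.

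With this reduction, computing $\vartheta(F)$ is a one-parameter problem. The matrix $\gamma I-J+yA_F$ is diagonalised by the all-ones vector, which has $A_F$-eigenvalue equal to the degree, together with its orthogonal complement, since $F$ is regular. Positive semidefiniteness means $\gamma-m+y\deg\ge0$ on the constant direction and $\gamma+y\lambda\ge0$ for all other eigenvalues $\lambda$. Taking $y\ge0$, which is forced because the bounds balance only then, the binding constraint is the one at $\lambda_{\min}$. Balancing the two constraints gives $\gamma=m\beta/(d+\beta)$, after translating through the dichotomy. The lower bound for $\vartheta(H_t(D))$ is then immediate from \eqref{eq:theta-transfer}, with $\eta=\vartheta(F)$.

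For the equivalence I will work with \eqref{eq:averaged-dual}, which is exact by the same averaging argument. If $\lambda_{\min}(A_{H_t(D)})=-d$, I will take $U=(m/d)P_t$ and $\gamma=m$. The lift is $d(t-1)$-regular, so on the all-ones vector the matrix gives $m-mt+m(t-1)=0$. On its orthogonal complement, where $J$ vanishes, the eigenvalues are $m+(m/d)\lambda\ge0$. Hence $\vartheta\le m$, and a full layer gives $\vartheta\ge m$.

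The converse is the step I expect to be delicate. Suppose $\vartheta(H_t(D))=m$, and take an optimal $U$ with $\gamma=m$. I will test the positive semidefinite matrix on vectors $w\otimes\mathbf1_m$. Since $A\mathbf1=A^{\mathsf T}\mathbf1=d\mathbf1$, this compresses to the $t\times t$ matrix
\[
 m(I_t-J_t)+d(U+U^{\mathsf T}),
\]
which must be positive semidefinite. Its diagonal is zero, so the whole matrix vanishes, and this forces $U=(m/d)P_t$. Feasibility on the orthogonal complement of $\mathbf1$ then reads $m+(m/d)\lambda\ge0$ for each eigenvalue $\lambda$ of $A_{H_t(D)}$ there. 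The all-ones eigenvalue $d(t-1)$ is positive, so this gives $\lambda_{\min}(A_{H_t(D)})\ge-d$. For the reverse inequality when $t\ge2$, the subgraph induced by layers $1$ and $2$ is bipartite with biadjacency matrix $A$. Its largest singular value is $d$, witnessed by $\mathbf1$, so this subgraph has eigenvalue $-d$. Cauchy interlacing then gives $\lambda_{\min}(A_{H_t(D)})\le-d$.

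Finally, when the equivalent conditions hold, $m\le\alpha(H_t(D))\le\vartheta(H_t(D))=m$, using a full layer and Lov\'asz's bound. None of these steps uses connectedness or the absence of transitive triangles.
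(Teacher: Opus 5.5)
Your proposal is correct and follows the paper's proof: the opposite-arc dichotomy, the compression of the averaged dual \eqref{eq:averaged-dual} to layer constants that forces $U=(m/d)P_t$, and the reading of feasibility of $mI-J+(m/d)A_{H_t(D)}$ on $\mathbf1^\perp$ are exactly its steps. The differences are cosmetic. You rederive the equality case of Lov\'asz's Theorem~9 by symmetry averaging, where the paper cites it. You also obtain $\lambda_{\min}(A_{H_t(D)})\le-d$ by interlacing with the first two layers, while the paper notes directly that the layer-constant vectors $w\otimes\mathbf1_m$ with $\sum_i w_i=0$ are eigenvectors for $-d$; your witness $(\mathbf1,-\mathbf1)$ is one of these.
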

\begin{proof}
Arc transitivity implies that either every arc has its reverse or
none does. Thus $F$ has adjacency matrix $S$ or $2S$ and is regular
and edge-transitive. Lov\'asz's formula~\cite[Theorem~9]{Lovasz}
gives the first equality, and \eqref{eq:theta-transfer}
gives the lift lower bound.

For the criterion, put $B=A_{H_t(D)}$. At $\gamma=m$, the compression
of \eqref{eq:averaged-dual} to normalized layer constants has zero
diagonal and off-diagonal entries $du_{ij}-m$. Positive
semidefiniteness forces every entry to vanish, so $u_{ij}=m/d$.
The only possible averaged dual is therefore $Y=(m/d)B$.

Conversely, this choice is feasible exactly when
$M=mI_{mt}-J_{mt}+(m/d)B\succeq0$; the independent layer then
forces equality. Since $B$ is $d(t-1)$-regular, $M$ vanishes on
$\mathbf1_{mt}$ and equals $m(I+B/d)$ on its orthogonal
complement. The layer-constant vectors whose coefficients sum to
zero have eigenvalue $-d$ for $B$. Therefore $M\succeq0$ exactly
when $\lambda_{\min}(B)=-d$, proving the criterion and
$\alpha(H_t(D))=m$ in the equality range.
\end{proof}

\begin{lemma}[Affine symmetry]\label{lem:affine-arc-transitive}
For every prime power $q\ge2$, the affine transformations
\[
 x'=rx+b,\qquad y'=sy+ux+v,
 \qquad r,s\in\F_q\setminus\{0\},\quad b,u,v\in\F_q,
\]
act transitively on the vertices and on the arcs of $D_{B_q}$.
Their simultaneous action in all layers preserves $G(q,t)$.
\end{lemma}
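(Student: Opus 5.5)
The plan is to verify directly that each listed map is an automorphism of the incidence structure (points and nonvertical lines), then check that it acts transitively on flags and on arcs.

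\emph{Preservation of the incidence structure.} The map $\phi(x,y)=(rx+b,\ sy+ux+v)$ is a bijection of $\F_q^2$. It sends the line $y=ax+c$ to a nonvertical line. To see this, substitute $x=(x'-b)/r$ and $y=(y'-ux-v)/s$. This gives $y'=s(ax+c)+ux+v=(sa+u)x+sc+v$, which becomes $y'=\frac{sa+u}{r}x'+\text{const}$. So $\phi$ maps points to points and lines to lines, and it preserves incidence. Because it is a collineation of $B_q$, it permutes the flags. It also preserves the arc relation of $D_B$, which is defined purely by incidence: $u\ne u'$, $v\ne v'$, $uv'\in E$. The maps act on the flag set identically in every layer. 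They therefore preserve layers and the ordered adjacency, so they are automorphisms of $G(q,t)$.

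\emph{Transitivity on flags.} Take a flag consisting of the point $(x_0,y_0)$ on the line of slope $a$. Choose $r=s=1$, $b=x_0$, $v=y_0$ and $u=a$. This sends the base flag (origin, line $y=0$) to the given flag: the origin goes to $(x_0,y_0)$, and the slope $0$ becomes $u=a$.

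\emph{Transitivity on arcs.} By flag transitivity it suffices to show that the stabilizer of the base flag $e_0=((0,0),\,y=0)$ is transitive on the out-neighbours of $e_0$. That stabilizer consists of the maps with $b=v=0$ and $u=0$, namely $(x,y)\mapsto(rx,sy)$, which send slope $a$ to $sa/r$. An out-neighbour of $e_0$ is described as follows. Take a line through the origin other than $y=0$, that is $y=ax$ with $a\ne0$. Take a point on it other than the origin, $(h,ah)$ with $h\ne0$. The flag is $f=((h,ah),\,y=ax)$, where the flag's line is the line $y=ax$ through the old point. (Check against the arc definition: the cross edge joins the point of $e_0$ and the line of $f$, and the origin lies on $y=ax$.) There are $(q-1)^2$ such $f$, indexed by $(a,h)\in(\F_q^*)^2$. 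The map $(rx,sy)$ sends $(h,ah)$ to $(rh,sah)$, which lies on the line of slope $sa/r$. So $f_{a,h}$ goes to $f_{sa/r,\,rh}$. Given a target $(a',h')$, put $r=h'/h$ and $s=a'r/a$. Both are nonzero, so the stabilizer is regular on the out-neighbours of $e_0$.

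I expect the only delicate point to be keeping the orientation of arcs straight, that is, which cross edge defines $e\to f$. The matching $q^2(q-1)$ count serves as a check: the group has order $(q-1)^2q^3$, which equals the number of arcs $q^3(q-1)^2$. The action is therefore in fact regular on arcs.
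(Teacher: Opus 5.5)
Your proof is correct and is essentially the paper's argument. The paper fuses your two steps into one map: it applies the inverse of your translation--shear, which sends the first flag to the base flag, and then a diagonal element $(x,y)\mapsto(rx,sy)$ of its stabilizer. The composite is $X=(x-x_0)/\Delta x$, $Y=(y-y_0-a(x-x_0))/(\Delta a\,\Delta x)$, which sends any arc to $((0,0),Y=0)\to((1,1),Y=X)$; the paper then gets vertex transitivity from positive out-degree. Two small points to tidy: your stabilizer reduction uses that these maps are closed under composition and inversion, which is immediate but should be said, and the stray ``$q^2(q-1)$'' in your last paragraph should be the arc count $q^3(q-1)^2$.
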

\begin{proof}
These transformations preserve incidence and nonvertical lines,
and therefore preserve the directed edge relation.
Consider an arc from the flag with point $(x_0,y_0)$ and line
of slope $a$ to the flag with point $(x_1,y_1)$ and line of
slope $a'$. The second line passes through the first point.
The arc conditions imply $\Delta x=x_1-x_0\ne0$ and
$\Delta a=a'-a\ne0$. The transformation
\[
 X=\frac{x-x_0}{\Delta x},\qquad
 Y=\frac{y-y_0-a(x-x_0)}{\Delta a\,\Delta x}
\]
maps these flags to $((0,0),Y=0)$ and $((1,1),Y=X)$.
Thus all arcs lie in one orbit. Every vertex has positive outdegree,
so the action is also transitive on vertices. The assertion for
the lift follows from its definition.
\end{proof}

We use an orthonormal form of the two-dimensional spaces from Section~3.

\begin{lemma}[The affine spectral blocks]\label{lem:affine-blocks}
Let $q\ge3$ be a prime power, let $A=A_{D_{B_q}}$, and put
\[
 d_0=(q-1)^2,\qquad
 a_q=\frac{q-2}{2},\qquad
 h_q=\frac{(q-2)\sqrt q}{2},\qquad
 \tau_q=\frac{q\sqrt{q-1}}2.
\]
There is a common orthogonal decomposition for $A$ and $A^{\mathsf T}$
into scalar spaces with values $d_0$, $1$ and $1-q$, and
two-dimensional spaces on which, in a suitable orthonormal basis,
\begin{equation}\label{eq:affine-normal-form}
 A=\begin{pmatrix}-a_q&h_q+\tau_q\\h_q-\tau_q&-a_q\end{pmatrix}.
\end{equation}
The scalar multiplicities are $1$, $q^3-2q^2+1$ and $2(q-1)$,
respectively, and there are $q(q-1)$ two-dimensional spaces.
The value $d_0$ occurs on the all-ones direction; all other spaces
are orthogonal to it. In particular, if
$S=(A+A^{\mathsf T})/2$ and
\[
 \beta_q=\max\left\{q-1,\frac{(q-2)(1+\sqrt q)}2\right\},
\]
then $\lambda_{\min}(S)=-\beta_q$.
For $q\ge4$, the second term in this maximum equals $\beta_q$.
\end{lemma}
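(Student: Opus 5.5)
We will reuse the orthogonal decomposition constructed in the proof of Theorem~\ref{lem:singular-blocks}, specialized to $B_q$ with $a=b=q$, so that $d=(q-1)^2=d_0$. By Lemma~\ref{lem:factor}, both $A$ and $A^{\mathsf T}$ act as the identity on $W^\perp$. Since $B_q$ is connected with $2q^2$ vertices, $W^\perp$ has dimension $q^3-2q^2+1$, which accounts for the scalar value $1$. The singular values of $N$ are known from the Wenger description, or from $NN^{\mathsf T}=qI+J-I_q\otimes J_q$ in the proof of Proposition~\ref{prop:binary-rank}: $q$ once, $\sqrt q$ with multiplicity $q(q-1)$, and $0$ with multiplicity $q-1$ on each side. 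The principal singular direction is spanned by the constant vectors, so it contributes the all-ones direction of $\mathbb R^m$ with value $d_0$; this agrees with $A\mathbf1=d_0\mathbf1$ by regularity. The zero singular spaces on the left and on the right give scalar values $1-a=1-q$ and $1-b=1-q$, for a total multiplicity $2(q-1)$. All these spaces are mutually orthogonal, and every space other than the principal one is orthogonal to $\mathbf1$.

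For each of the $q(q-1)$ mixed pairs with $z=\sqrt q$, Theorem~\ref{lem:singular-blocks} gives the restrictions in the nonorthonormal basis $Xu,Yv$: the Gram matrix is $M_z=\bigl(\begin{smallmatrix}q&z\\z&q\end{smallmatrix}\bigr)$, and $A-I$ acts through $Q_zM_z$. I will take the coordinate matrix of $A$ in this basis, namely $I+M_z^{-1}(\cdots)$ or equivalently the operator whose bilinear form is $M_z+M_zQ_zM_z$, and transfer it to an orthonormal basis via $M_z^{1/2}$. This produces a real $2\times2$ matrix $T$ whose symmetric part and antisymmetric part can be computed explicitly. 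Computing the trace and determinant of $T$ and the norm of its antisymmetric part fixes the normal form: the diagonal is $-a_q$ after a rotation that equalizes the diagonal entries (this is always possible in dimension two), and the off-diagonal entries are $h_q\pm\tau_q$. As a consistency check, $\operatorname{tr}T=2-\operatorname{tr}(\ldots)$ should equal $-(q-2)$, and $\det T=a_q^2-h_q^2+\tau_q^2$. Matching these invariants, namely the trace, the determinant, and the Frobenius norm of the symmetric part, is the main computational step. It is also where I expect the most care to be needed, since the orthonormalization through $M_z^{1/2}$ is messy. A cleaner alternative is to compute the invariants directly with $M_z$, since $\operatorname{tr}(T)$, $\det T$ and $\operatorname{tr}(TT^{\mathsf T})$ are basis-invariant under orthonormal change and can be written as $\operatorname{tr}(M^{-1}\cdot)$ expressions. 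Three invariants determine the orbit of a real $2\times2$ matrix under orthogonal conjugation up to the sign of the skew part, which transposition handles because the same decomposition serves $A^{\mathsf T}$.

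Finally, $S$ acts by $d_0$, $1$ and $1-q$ on the scalar spaces. On each block its symmetric part is $\bigl(\begin{smallmatrix}-a_q&h_q\\h_q&-a_q\end{smallmatrix}\bigr)$, with eigenvalues $-a_q\pm h_q$, so the least of these is $-(q-2)(1+\sqrt q)/2$. Hence $\lambda_{\min}(S)=-\max\{q-1,(q-2)(1+\sqrt q)/2\}$. At $q=3$ the maximum is $2=q-1$, while for $q\ge4$ the inequality $(q-2)(1+\sqrt q)\ge2(q-1)$ holds, with equality at $q=4$ and strict growth afterwards, which is elementary.
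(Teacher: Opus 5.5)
Your proof is correct. For the scalar spaces it coincides with the paper, which reads them off from the decomposition in Theorem~\ref{lem:singular-blocks}. The difference is in how you obtain \eqref{eq:affine-normal-form} on a mixed block.

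The paper computes the symmetric part directly in the non-orthonormal basis $Xu,Yv$, with $\sigma=\sqrt q$. Its coordinate matrix $I+\tfrac12(Q_\sigma+Q_\sigma^{\mathsf T})M_\sigma$ is already $\left(\begin{smallmatrix}-a_q&h_q\\h_q&-a_q\end{smallmatrix}\right)$ and commutes with $M_\sigma$, so orthonormalizing by $M_\sigma^{-1/2}$ does not change it. It then gets $\tau_q$ from the similarity invariant $\det\bigl(\tfrac12(Q_\sigma-Q_\sigma^{\mathsf T})M_\sigma\bigr)=\sigma^2\det M_\sigma/4$.

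You instead classify the orthonormalized matrix $T$ by its orthogonal-similarity invariants. This avoids the commutation, which relies on $a=b$, so it would also handle unequal biregular degrees. The cost is a third invariant that you announce but never compute. It does match: with $G=M_\sigma+M_\sigma Q_\sigma M_\sigma=\left(\begin{smallmatrix}0&\sqrt q(q-1)^2\\-\sqrt q(q-1)&0\end{smallmatrix}\right)$, one finds $\operatorname{tr}(TT^{\mathsf T})=\operatorname{tr}(M_\sigma^{-1}GM_\sigma^{-1}G^{\mathsf T})=q^3-2q^2+2=2(a_q^2+h_q^2+\tau_q^2)$. Shorter still, take the skew determinant $\tau_q^2$ as the third invariant, as the paper does. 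The symmetric part then has trace $-2a_q$ and determinant $(q-1)^2-\tau_q^2=a_q^2-h_q^2$, which gives the eigenvalues $-a_q\pm h_q$.

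One correction concerns the sign of the skew part. Passing to $A^{\mathsf T}$ does not fix it, because the normal form is asserted for $A$ itself. What fixes it is swapping the two orthonormal basis vectors. This reflection preserves a symmetric part with equal diagonal entries and reverses the skew part. Under the full orthogonal group, your three invariants therefore determine the orbit completely.
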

\begin{proof}
The proof of Theorem~\ref{lem:singular-blocks} gives the stated scalar actions.
On a pair with $\sigma=\sqrt q$, the symmetric action is
\[
 I+\frac12(Q_\sigma+Q_\sigma^{\mathsf T})M_\sigma
   =\begin{pmatrix}-a_q&h_q\\h_q&-a_q\end{pmatrix},
\]
where these coordinates have Gram matrix $M_\sigma$.
The displayed symmetric matrix commutes with $M_\sigma$, so
orthonormalizing by $M_\sigma^{-1/2}$ leaves it unchanged.
The skew-adjoint action has determinant
$\sigma^2\det M_\sigma/4=q^2(q-1)/4=\tau_q^2$.
In an orthonormal basis the latter is
$\left(\begin{smallmatrix}0&\tau_q\\-\tau_q&0\end{smallmatrix}\right)$
after choosing orientation. A rotation puts the symmetric part
in the displayed form, proving \eqref{eq:affine-normal-form}.
Its symmetric eigenvalues are $-a_q\pm h_q$; comparison with
the scalar values gives $\beta_q$ and the assertion for $q\ge4$.
\end{proof}

We next construct a feasible theta dual certificate.
Let $D$ be loopless with $m$ vertices and constant indegree and outdegree $d>0$.
Put $A=A_D$ and $A_0=A|_{\mathbf1_m^\perp}$; this subspace is
invariant under both $A$ and $A^{\mathsf T}$.
For symmetric $K=(k_{ij})\in\mathbb R^{t\times t}$ with unit diagonal,
let $M_t(K)$ have diagonal blocks $dI$, upper blocks $k_{ij}A_0$,
and transpose blocks below. For $\ell>0$,
\begin{equation}\label{eq:weighted-dual}
 \ell K-J_t\succeq0,\quad M_t(K)\succeq0
 \quad\Longrightarrow\quad
 \vartheta(H_t(D))\le m\ell.
\end{equation}
Indeed, choose $\gamma=m\ell$, upper blocks
$Y_{ij}=(m\ell/d)k_{ij}A$, transpose blocks below, and zero diagonal
blocks. The dual restricts to $m(\ell K-J_t)$ on layer constants
and to $(m\ell/d)M_t(K)$ on their orthogonal complement.

We use the following scalar kernel calculation. For $0<r<1$,
the Kac--Murdock--Szeg\H{o} matrix $K_r=(r^{|i-j|})_{i,j=1}^t$
is positive definite. Its classical tridiagonal inverse gives
\begin{equation}\label{eq:kms-certificate}
 \ell_r=\mathbf1_t^{\mathsf T}K_r^{-1}\mathbf1_t
       =\frac{t(1-r)+2r}{1+r},\qquad \ell_rK_r-J_t\succeq0.
\end{equation}
Here the inverse formula is from Higham~\cite[Section~8.2]{Higham};
the last inequality is Cauchy--Schwarz, valid for any positive
definite $K$ with $\ell=\mathbf1^{\mathsf T}K^{-1}\mathbf1$.

In the Fourier calculation below, $\mathrm{i}$ denotes the imaginary unit.

\begin{theorem}[Sharp theta bounds for affine lifts]
\label{thm:theta-bounds}
Let $q\ge3$ be a prime power, put $m=q^3$ and $d=(q-1)^2$,
and let $F_q=F(D_{B_q})$.
With $\beta_q$ as in Lemma~\ref{lem:affine-blocks}, put
$b_q=\beta_q/(d+\beta_q)$.
Then $\vartheta(F_q)=mb_q$ and, for every integer $t\ge1$,
\[
 \max\{m,mtb_q\}\le\vartheta(G(q,t))
 \le m\bigl(1+(t-1)b_q\bigr).
\]
In particular,
\begin{equation}\label{eq:theta-exact-center}
 0\le\vartheta(G(q,t))-t\vartheta(F_q)
 \le q^3-\vartheta(F_q).
\end{equation}
For each fixed $q$, this additive constant is best possible in a
bound valid for every $t\ge1$, since equality holds at $t=1$.
The weaker bound with right side $q^3$ has best possible uniform
coefficient $1$ as $q$ and $t$ vary.
Moreover, with an absolute implied constant,
\begin{equation}\label{eq:theta-quantitative}
 \vartheta(G(q,t))
 =\frac12tq^{5/2}+\frac14tq^2
   +O\!\left(tq^{3/2}+q^3\right).
\end{equation}
Consequently, if $q\to\infty$ and $t/\sqrt q\to\infty$,
then $\vartheta(G(q,t))=(1/2+o(1))tq^{5/2}$.
\end{theorem}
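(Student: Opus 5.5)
The plan is to prove the four assertions in order: the base value, the two lift bounds, the sharpness remarks, and finally the expansion~\eqref{eq:theta-quantitative}.

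\emph{Base value and lower bound.} By Lemma~\ref{lem:affine-arc-transitive}, $D_{B_q}$ is arc-transitive, and it is $d$-in- and out-regular with $d=(q-1)^2$. Proposition~\ref{prop:general-arc-theta} then gives $\vartheta(F_q)=m\beta/(d+\beta)$, where $\beta=-\lambda_{\min}(S)$. Lemma~\ref{lem:affine-blocks} identifies $\beta=\beta_q$, so $\vartheta(F_q)=mb_q$. The lower bound $\max\{m,mtb_q\}$ is then the first inequality in \eqref{eq:theta-transfer}, or equivalently the lift bound of Proposition~\ref{prop:general-arc-theta}. The left inequality of \eqref{eq:theta-exact-center} follows immediately.

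\emph{Upper bound.} I will use the weighted dual \eqref{eq:weighted-dual} with $\ell=1+(t-1)b_q$. The task is to find a symmetric $K$ with unit diagonal, $\ell K-J_t\succeq0$ and $M_t(K)\succeq0$.

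\begin{itemize}
\item \emph{Reduction.} By Lemma~\ref{lem:affine-blocks}, $M_t(K)$ splits orthogonally over the spectral spaces of $A_0$. On a scalar space with value $\lambda\in\{1,1-q\}$, the condition is $dI_t+\lambda(K-I_t)\succeq0$. On each two-dimensional space, write $A=-a_qI+h_q\sigma_x+\tau_q\mathcal J$, where $\mathcal J$ is the rotation generator. Since $\mathcal J$ has eigenvalues $\pm\mathrm{i}$, diagonalize it over $\mathbb C$.
\item \emph{The Hermitian conditions.} Let $K^{\pm}$ be the Hermitian matrix with upper entries $\pm\mathrm{i}k_{ij}$. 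The condition on each two-dimensional space becomes a $2t\times2t$ Hermitian condition built from $dI_t$, $(K-I_t)\otimes(-a_qI+h_q\sigma_x)$ and $\tau_q K^{\pm}$.
\item \emph{First choice of $K$.} I will first try $K=(1-\kappa)I_t+\kappa J_t$ with $\kappa=b_q$. Then $\ell K-J_t$ has eigenvalues $\ell(1-\kappa)\ge0$ and $\ell(1-\kappa+t\kappa)-t=(1-b_q)(t-1)b_q\ge0$, so this condition holds.
\item \emph{The obstacle.} The matrix $M_t(K)$ then involves the skew matrix $P_t-P_t^{\mathsf T}$. Its Fourier eigenvalues are $\mathrm{i}\cot\bigl((2j-1)\pi/(2t)\bigr)$, which grow like $t$, and this threatens positivity. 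This is the main obstacle.
\item \emph{Second choice of $K$.} If the constant choice fails, I will replace it by a Toeplitz kernel of Kac--Murdock--Szeg\H{o} type, with $K_r$ and $\ell_r$ from \eqref{eq:kms-certificate}, possibly twisted by a phase $e^{\mathrm{i}\phi(j-i)}$ and then taking real parts. The Toeplitz symbol of $K_r$ is the Poisson kernel $(1-r^2)/|1-re^{\mathrm{i}\theta}|^2$. Using it, the $2\times2$ symbol condition $d+\widehat{k}(\theta)\cdot(\text{block})\succeq0$ can be checked pointwise, with the worst case governed by $\beta_q$.
\item \emph{Matching the target $\ell$.} I will tune $r$ (and $\phi$) so that $\ell_r$ matches $1+(t-1)b_q$. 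If exact matching fails, I will aim for $\ell\le 1+(t-1)b_q$, which is all the stated bound requires.
\end{itemize}

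\emph{Sharpness and asymptotics.}
\begin{itemize}
\item The right inequality of \eqref{eq:theta-exact-center} follows by subtracting $t\vartheta(F_q)=tmb_q$ from the upper bound. At $t=1$ both sides equal $q^3-\vartheta(F_q)$, so the additive constant is best possible for each fixed $q$.
\item For the coefficient $1$ in the weaker bound, fix $t=2$ and let $q\to\infty$. Then $b_q\to0$, and the lower bound $\vartheta(G(q,2))\ge m$ gives a gap of at least $q^3(1-2b_q)=(1-o(1))q^3$.
\item For \eqref{eq:theta-quantitative}, expand $\beta_q=\tfrac12q^{3/2}+\tfrac12q-q^{1/2}-1$ and $d+\beta_q=q^2+O(q^{3/2})$. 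This gives $mb_q=\tfrac12q^{5/2}+\tfrac14q^2+O(q^{3/2})$. Multiplying by $t$ and absorbing the additive $m$ from the upper bound gives the error term $O(tq^{3/2}+q^3)$.
\item The final consequence holds because $q^3=o(tq^{5/2})$ exactly when $t/\sqrt q\to\infty$.
\end{itemize}
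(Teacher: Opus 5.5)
The base value, the lower bound, the sharpness remarks and the expansion are essentially right. The gap is the upper bound, which is the real content of the theorem and which you leave at ``can be checked pointwise, with the worst case governed by $\beta_q$''.

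\textbf{The symbol.} Your description of the symbol is not accurate. Write $z=2\sum_{j\ge1}k_je^{\mathrm{i}j\theta}=x+\mathrm{i}y$. The Poisson kernel only supplies $x=\widehat k-1$, while the skew part $\tau_q$ of the mixed block sees the conjugate function $y$. The mixed condition is therefore $d-a_qx\ge\sqrt{h_q^2x^2+\tau_q^2y^2}$, which is not controlled by $\beta_q$ alone.

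\textbf{Where the KMS certificate fails.} Matching $\ell_r=1+(t-1)b_q$ forces $r=d/(d+2\beta_q)$. The symbol then traces the circle $y^2=(c-x)(x+s)$ with $c=d/\beta_q$ and $s=d/(d+\beta_q)$. The paper verifies the resulting inequality for $q=3$, where it reduces to $4x^2-10x+10>0$, and for $q\ge5$, where it reduces to $(d+\beta_q)(\beta_q^2-d)>\beta_q\tau_q^2$. For $q=4$, however, $d=9$, $a_4=1$, $h_4=2$, $\tau_4^2=12$ and $\beta_4=3$. The difference of squares is then $9(x-2)(x-3)$, which is negative for $2<x<3$, i.e.\ near $\theta=0$. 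So your certificate does not prove $\vartheta(G(4,t))\le16t+48$, which the theorem asserts.

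\textbf{Smaller slips.} Your first choice is miscomputed: on $\mathbf1$ the eigenvalue of $\ell K-J_t$ is $(1+(t-1)b_q)^2-t$, which equals $-2/9$ for $q=3$, $t=2$. For the $\tfrac14tq^2$ term you need $d+\beta_q=q^2+\tfrac12q^{3/2}+O(q)$, not just $q^2+O(q^{3/2})$.

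\textbf{Why tuning $r$ cannot repair $q=4$.} Take any Toeplitz kernel used for all $t$. Since $\mathbf1^{\mathsf T}K^{-1}\mathbf1\sim t/(1+x(0))$ must be at most $(t+3)/4$, you need $x(0)\ge3$. The scalar value $1-q=-3$ forces $x\le3$. Hence the mixed inequality is tight at $\theta=0$. To second order it then requires $\sum_j j^2k_j\cdot\sum_jk_j\ge2\bigl(\sum_j jk_j\bigr)^2$. Every geometric sequence gives ratio $1+r<2$. Allowing $\ell<1+(t-1)b_q$ does not help either, since it still forces $r=3/5$. You give no argument that a phase twist escapes this.

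\textbf{What the paper does instead.} It uses a heavier-tailed mixture $K=\tfrac{21}{22}K_{4/7}+\tfrac1{22}K_{5/6}$, whose ratio is $699/338>2$. It shows every nonconstant symbol dominates $g(\theta)I$, where $g=40u/(8+u)$ and $u=1-\cos\theta$. This mixture alone violates $\ell K-J_t\succeq0$ for $\ell=(t+3)/4$; for instance at $t=3$ one gets $\mathbf1^{\mathsf T}K^{-1}\mathbf1\approx1.524>1.5$. The paper therefore passes to the non-Toeplitz endpoint modification $\widehat K=RKR+I_t-R^2$, with $R=I_t+E/7$ and $E=e_1e_1^{\mathsf T}+e_te_t^{\mathsf T}$. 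This is paid for by the boundary slack $M_t(K)\succeq(20-8\sqrt5)\,E\otimes I$ coming from $T_t(g)$. Some such additional idea is needed to cover $q=4$.
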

\begin{proof}
The symmetry and spectral lemmas, together with
Proposition~\ref{prop:general-arc-theta}, give $\vartheta(F_q)$
and the lower bound. The case $t=1$ is immediate, so assume $t\ge2$.
Put $A_0=A_{D_{B_q}}|_{\mathbf1_m^\perp}$ and use the certificate
\eqref{eq:weighted-dual}. It remains to find a unit-diagonal $K$
with $\ell K-J_t\succeq0$ and $M_t(K)\succeq0$.
For a Toeplitz matrix $K=(k_{|i-j|})$ with $k_0=1$ and
absolutely summable coefficients, Fourier integration proves the
latter condition if
\[
 dI+\frac{zA_0+\overline zA_0^{\mathsf T}}2\succeq0,
 \qquad z=2\sum_{j\ge1}k_je^{\mathrm{i}j\theta}
 \quad(\theta\in\mathbb R).
\]
Indeed, call this symbol $\mathsf H(\theta)$ and put
$p(\theta)=\sum_{j=1}^t e^{-\mathrm{i}j\theta}v_j$ for a block vector $v$.
Fourier orthogonality gives
$v^*M_t(K)v=(2\pi)^{-1}\int_0^{2\pi}
 p(\theta)^*\mathsf H(\theta)p(\theta)\,d\theta$.
Thus the pointwise condition implies finite-matrix positivity
for every $t$, not merely asymptotically.
Only the scalar values $1,1-q$ and the mixed block remain.
Writing $z=x+\mathrm{i}y$, abbreviate
$a=a_q$, $h=h_q$, $\tau=\tau_q$, and $\beta=\beta_q$.
The mixed condition is
\begin{equation}\label{eq:theta-symbol-condition}
 d-ax\ge\sqrt{h^2x^2+\tau^2y^2}.
\end{equation}
Both scalar conditions hold whenever $-1<x\le d/\beta\le q-1$.

For $q=3$ and $q\ge5$, a single geometric kernel suffices.
Take $K=K_r=(r^{|i-j|})$
with $r=d/(d+2\beta)$ and $\ell=1+(t-1)b_q$;
\eqref{eq:kms-certificate} verifies the layer-constant condition.
Put $c=d/\beta$ and $s=d/(d+\beta)$.
The symbol $z=2re^{\mathrm{i}\theta}/(1-re^{\mathrm{i}\theta})$
traces the circle $y^2=(c-x)(x+s)$, with $-s\le x\le c$.
For $q=3$, the difference of the squares in
\eqref{eq:theta-symbol-condition} is $4x^2-10x+10>0$.
For $q\ge5$, we have $\beta=a+h$ and
$\tau^2-h^2+a^2=d$. The same difference is
$(c-x)(d\beta-\tau^2s-dx)$, which is nonnegative because
\[
 \begin{aligned}
 (d+\beta)(\beta^2-d)-\beta\tau^2
 =\frac{q(1+\sqrt q)^2}{4}
 \bigl((q-5)^2(q+1)+4(q-1)\sqrt q-33\bigr)>0.
 \end{aligned}
\]
The final inequality follows from $16\sqrt5>33$.
Also $d-ax>0$ throughout the circle.
This verifies every nonconstant block and proves the upper bound
for $q\ne4$.

The case $q=4$ requires a different certificate: the analogous
single geometric kernel fails the mixed-symbol inequality near
$\theta=0$. For $t=1,2$, theta equals $64$, by an independent
layer and, for $t=2$, the matching-edge clique cover from
Theorem~\ref{thm:exactmsr}. Assume $t\ge3$ and set
$K=\tfrac{21}{22}K_{4/7}+\tfrac1{22}K_{5/6}$.
We will increase its endpoint correlations while retaining a
feasible dual. Its symbol is
$z=\zeta(49-40\zeta)/[(7-4\zeta)(6-5\zeta)]$, where
$\zeta=e^{\mathrm{i}\theta}$, and $-89/121\le x\le3$.
In a basis diagonalizing the symmetric part, the mixed symbol is
\[
 H(\theta)=
 \begin{pmatrix}9+x&\mathrm{i}\sqrt{12}\,y\\
                 -\mathrm{i}\sqrt{12}\,y&9-3x\end{pmatrix}.
\]
The scalar symbols are its diagonal entries.
Put $u=1-\cos\theta$, $v=2-u$, $D=(9+56u)(1+60u)$,
and $g(\theta)=40u/(8+u)$.
Define the polynomial
\[
 \begin{aligned}
 P(u)={}&1728+8188728u+224797186u^2\\
        &+u^3(169589409+1994758080v+2562739200v^2).
 \end{aligned}
\]
The first diagonal entry of $H-gI_2$ is at least $32/121>0$.
Direct expansion gives $\det(H-gI_2)=4uP(u)/[D^2(8+u)^2]\ge0$,
since $0\le u\le2$ and $v=2-u\ge0$. Thus every nonconstant
symbol is at least $gI$.

Let $e_1,e_t$ be the first and last coordinate vectors in $\mathbb R^t$.
Set $E=e_1e_1^{\mathsf T}+e_te_t^{\mathsf T}$, $R=I_t+E/7$,
$r_0=9-4\sqrt5$, and $h=20-8\sqrt5>135/64$.
The Toeplitz matrix of the Fourier coefficients of $g$ is
$T_t(g)=40I_t-16\sqrt5K_{r_0}$. It has negative
off-diagonal entries and row sums $h(r_0^{i-1}+r_0^{t-i})$.
Subtracting their diagonal leaves a weighted Laplacian, so
$T_t(g)\succeq hE$ and $M_t(K)\succeq hE\otimes I$.
For $\widehat K=RKR+I_t-R^2$, the same matrix becomes
\[
 \begin{aligned}
 M_t(\widehat K)
 &=(R\otimes I)M_t(K)(R\otimes I)+9(I_t-R^2)\otimes I\\
 &\succeq \frac{64}{49}
       \left(h-\frac{135}{64}\right)E\otimes I\succeq0.
 \end{aligned}
\]

For the layer constants, put $\widehat K_r=RK_rR+I_t-R^2$
for $r\in\{4/7,5/6\}$. These endpoint-modified kernels require
the following short calculation, rather than the unmodified inverse. Let $a_1=a_{t-1}=8r/7$ and $a_i=r$
otherwise. The lower bidiagonal matrix $L$ with diagonal entries
one and subdiagonal entries $-a_i$ satisfies
$L\widehat K_rL^{\mathsf T}=\operatorname{diag}(1,1-a_1^2,\ldots,1-a_{t-1}^2)$.
Since $0<a_i<1$, this proves positive definiteness and gives
\[
 \ell_r:=\mathbf1_t^{\mathsf T}\widehat K_r^{-1}\mathbf1_t
 =1+(t-3)\frac{1-r}{1+r}
       +2\frac{1-8r/7}{1+8r/7}.
\]
In particular, $\ell_{4/7}=(243t+536)/891$ and
$\ell_{5/6}=(41t+350)/451$. Cauchy--Schwarz for these two
positive definite matrices now gives
\[
 \widehat K\succeq
 \left(\frac{21}{22\ell_{4/7}}+\frac1{22\ell_{5/6}}\right)J_t
 =\frac{39852t+308663}{(243t+536)(41t+350)}J_t
 \succeq\frac4{t+3}J_t.
\]
The last comparison reduces to $115t+175589>0$.
The weighted dual with $\ell=(t+3)/4$ therefore gives
$\vartheta(G(4,t))\le16t+48$, completing
\eqref{eq:theta-exact-center} for every $q\ge3$.
Equality at $t=1$ proves fixed-$q$ sharpness, while
$1-b_q\to1$ proves the uniform coefficient claim.

For $q\ge4$, direct simplification gives
\[
 b_q=\frac1{2\sqrt q}+\frac1{4q}
       -\frac{3\sqrt q-2}{4q(2q-\sqrt q-2)}.
\]
The last term has magnitude at most $q^{-3/2}/2$.
Together with \eqref{eq:theta-exact-center}, this gives
\eqref{eq:theta-quantitative}; the case $q=3$ is absorbed using
$\vartheta(F_3)=9$. The leading asymptotic follows.
\end{proof}

\Needspace{12\baselineskip}

\begin{theorem}[The exact layer cutoff]\label{thm:theta-cutoff}
For every prime power $q\ge3$, use the principal values of the
inverse trigonometric functions and put
\[
 \phi_q=\arcsin\left(\frac{\sqrt q}{2(q-1)}\right),
 \qquad
 \omega_q=\arctan\left(
       \frac{q}{q-2}\tan\left(\frac\pi4-\frac{\phi_q}{2}\right)
                        \right),
\]
Put $T_q=\omega_q/\phi_q$. Then $2<T_q\le q$, and for every positive integer $t$,
\[
 \vartheta(G(q,t))=q^3
 \quad\Longleftrightarrow\quad
 t\le T_q.
\]
In this range, $\alpha(G(q,t))=q^3$ as well.
Moreover,
\begin{equation}\label{eq:theta-cutoff-expansion}
 T_q=\frac\pi2\sqrt q-\frac12
       +\left(2-\frac{25\pi}{48}\right)q^{-1/2}
       +O(q^{-3/2}).
\end{equation}
Thus the largest admissible integer number of layers is exactly
$\lfloor T_q\rfloor$.
\end{theorem}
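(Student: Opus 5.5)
The plan is to reduce everything to a spectral computation and then apply Proposition~\ref{prop:general-arc-theta}. Lemma~\ref{lem:affine-arc-transitive} shows that $D_{B_q}$ is arc-transitive, with indegree and outdegree $d=(q-1)^2$. So for $t\ge2$ that proposition gives
\[
\vartheta(G(q,t))=q^3\iff\lambda_{\min}(A_{G(q,t)})=-d,
\]
and in that range also $\alpha(G(q,t))=q^3$. The case $t=1$ is trivial, and $T_q>2$ will cover it. The key monotonicity fact is that $G(q,t)$ is an induced subgraph of $G(q,t+1)$. By interlacing, $\lambda_{\min}(A_{G(q,t)})$ is nonincreasing in $t$, and it is always at most $-d$ because the layer-constant vectors with zero sum have eigenvalue $-d$. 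Hence the set of admissible $t$ is an initial segment, and it suffices to locate the first $t$ at which some eigenvalue drops strictly below $-d$.

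Write $A_{G(q,t)}=P_t\otimes A+P_t^{\mathsf T}\otimes A^{\mathsf T}$ and use the common orthogonal decomposition of Lemma~\ref{lem:affine-blocks}. A scalar space with value $\lambda$ contributes $\lambda(J_t-I_t)$, whose eigenvalues are $\lambda(t-1)$ and $-\lambda$.
\begin{itemize}
\item For $\lambda=d$ and $\lambda=1$ these eigenvalues are at least $-d$.
\item For $\lambda=1-q$ the condition $(1-q)(t-1)\ge-d$ is exactly $t\le q$. This is where the bound $T_q\le q$ enters.
\end{itemize}
On each of the $q(q-1)$ mixed planes, $A$ has the normal form \eqref{eq:affine-normal-form}. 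I then study the $2t\times2t$ matrix
\[
M_t=dI_{2t}+P_t\otimes A_z+P_t^{\mathsf T}\otimes A_z^{\mathsf T},
\]
where $A_z$ is the $2\times2$ matrix in \eqref{eq:affine-normal-form}. Since the lowest eigenvalue is continuous and nonincreasing in $t$, and $M_1=dI\succ0$, the first failure of positive semidefiniteness occurs at the first $t$ for which $M_t$ is singular with a negative direction appearing. So I will compute $\det M_t$ exactly.

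For that computation I reuse the transfer-matrix method from the proof of Theorem~\ref{lem:singular-blocks}. Consecutive block-row differences of $M_t x=0$ give
\[
(dI-A_z)x_{i+1}=(dI-A_z^{\mathsf T})x_i,
\qquad\text{so}\qquad
x_{i+1}=Rx_i,\quad R=(dI-A_z)^{-1}(dI-A_z^{\mathsf T}).
\]
The matrices $dI-A_z$ and $dI-A_z^{\mathsf T}$ are transposes of each other, so $\det R=1$. I expect $R$ to be elliptic, with eigenvalues $e^{\pm2\mathrm i\phi_q}$. The trace computation should give
\[
\cos2\phi_q=1-\frac{q}{2(q-1)^2},
\]
that is, $\sin\phi_q=\sqrt q/(2(q-1))$. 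Substituting $x_i=R^{i-1}x_1$ into the first block equation yields a $2\times2$ condition $F_tx_1=0$. By Cayley--Hamilton, $\det F_t$ becomes a trigonometric expression of the form $c\,\sin\bigl(t\phi_q-\omega_q\bigr)\cdot(\text{positive factor})$. The phase $\omega_q$ should come out as the stated arctangent, with $\tan(\pi/4-\phi_q/2)$ arising from the half-angle form of the boundary term and the factor $q/(q-2)$ from the ratio $\tau_q$ to $h_q$ or $a_q$.

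The condition then reads $t\phi_q\le\omega_q$, which is $t\le T_q$. The sign change at the crossing gives a strictly negative eigenvalue just beyond it. I will also check that no mixed eigenvalue reaches $-d$ from an eigenvector outside this family; the kernel analysis covers all of them. I expect this determinant identification, with exact constants and sign bookkeeping, to be the main obstacle. If the closed form resists, I will fall back on the sign of $g$-type recurrences as in the proof of Theorem~\ref{lem:singular-blocks}, rewritten as Chebyshev polynomials in $\cos2\phi_q$.

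The remaining steps are routine. For $T_q>2$, check directly that $M_2\succeq0$ from the block normal form. For $T_q\le q$, compare $\omega_q<\pi/2$ with $q\phi_q\ge q\sqrt q/(2(q-1))$, handling small $q$ by hand. The expansion \eqref{eq:theta-cutoff-expansion} follows by Taylor expansion of $\phi_q$ and $\omega_q$ in $q^{-1/2}$. In particular $\omega_q\to\pi/4\cdot$(correction) plus terms giving $\tfrac\pi2\sqrt q-\tfrac12$ after division. Finally, the integer statement $\lfloor T_q\rfloor$ is immediate from the initial-segment property.
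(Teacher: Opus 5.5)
\paragraph{Gap: certifying $M_t\succeq0$.}
Several parts of your proposal are sound. The scalar spaces are handled correctly, and the value $1-q$ gives exactly $t\le q$. Interlacing does make the admissible $t$ an initial segment. The transfer matrix $(dI-A_z)^{-1}(dI-A_z^{\mathsf T})$ does have eigenvalues $e^{\pm2\mathrm{i}\phi_q}$.

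The gap is in how you certify $M_t\succeq0$ for $t\le T_q$. An intermediate-value argument in the integer variable $t$ is not available. $\lambda_{\min}(M_t)$ can jump from positive to negative between consecutive $t$, and near the threshold no $M_t$ is singular unless $T_q$ happens to be an integer.

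Your fallback, the sign of the determinant, gives only one direction. $\det M_t<0$ forces a negative eigenvalue, but $\det M_t>0$ does not give $M_t\succeq0$. $M_{t-1}$ is obtained from $M_t$ by deleting two rows and columns, so interlacing lets the number of negative eigenvalues jump by two while the determinant stays positive. Determinants at block boundaries only are not Sylvester's criterion.

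Two smaller points. The reduction in Theorem~\ref{lem:singular-blocks} only matches the nullities of $M_t$ and $F_t$; it does not match their determinants up to a positive factor. Also, $\det M_t$ is not a single sine times a globally positive factor. Up to positive factors it equals $\sin(\omega_q-t\phi_q)\sin(\omega'-t\phi_q)$, with a second phase $\omega'\in(\pi/2,\pi)$.

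\paragraph{The missing idea.}
That factorization reflects what the paper does: each mixed block decouples into two scalar Jacobi matrices whose leading minors of \emph{every} order are explicit. Let $S$ be the symmetric part of \eqref{eq:affine-normal-form}, $R=dI_2-S$, and $E=\bigl(\begin{smallmatrix}0&1\\-1&0\end{smallmatrix}\bigr)$.
\begin{enumerate}
\item Congruence by the block lower bidiagonal difference matrix makes $M_t$ block tridiagonal, with diagonal $dI_2,2R,\dots,2R$ and off-diagonal blocks $-R+\tau_qE$.
\item Congruence by $R^{-1/2}$ in each layer turns the off-diagonal blocks into $-sV$ with $V$ orthogonal. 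This uses $N^{\mathsf T}EN=(\det N)E$ for $2\times2$ matrices $N$.
\item Apply $V^{-(i-1)}$ in layer $i$, then diagonalize $dR^{-1}$. This splits $M_t$ into two tridiagonal matrices. Each has off-diagonal entries $-s$, diagonal entries $2$ after the first, and first entry $d/(d+a_q\pm h_q)$.
\item For the smaller first entry $b$, the leading minors satisfy $D_j=2D_{j-1}-s^2D_{j-2}$. Hence $D_j=s^j\sin(\omega_q-j\phi_q)/\sin\omega_q$.
\end{enumerate}
Sylvester's criterion, together with the pivot argument when $T_q$ is an integer, then gives $M_t\succeq0\iff t\le T_q$ in both directions. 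It also gives $T_q>2$ from $D_2>0$.

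With this step supplied, the rest of your plan would finish the argument. Your direct numerical proof of $T_q\le q$ is a valid replacement for the paper's Rayleigh-quotient test on $K_q$, and your Taylor expansion handles the asymptotics.
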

\begin{proof}
Write $c=(q-1)^2$, $a=a_q$, $h=h_q$, and $\tau=\tau_q$.
On a mixed space of Lemma~\ref{lem:affine-blocks}, write
$A=\left(\begin{smallmatrix}-a&h+\tau\\h-\tau&-a\end{smallmatrix}\right)$.
Put $S=(A+A^{\mathsf T})/2$, $R=cI_2-S$, and
$E=\left(\begin{smallmatrix}0&1\\-1&0\end{smallmatrix}\right)$.
The matrix $R$ is positive definite: its trace is positive and
$\Delta:=\det R=q(q-1)(4q^2-9q+4)/4>0$.
Let $K_t$ be the restriction of $cI+A_{G(q,t)}$ to this mixed
space over all layers. Congruence by the block lower bidiagonal
matrix with diagonal $I_2$ and subdiagonal $-I_2$ gives a block
tridiagonal matrix with diagonal
$cI_2,2R,\ldots,2R$ and upper off-diagonal blocks $-R+\tau E$.

Congruence by $R^{-1/2}$ in each layer gives diagonal
$cR^{-1},2I_2,\ldots,2I_2$ and upper blocks $-I_2+rE$, where
\[
 r=\frac{\tau}{\sqrt\Delta}
   =\sqrt{\frac{q}{4q^2-9q+4}},\qquad s=\sqrt{1+r^2}.
\]
The matrix $V=(I_2-rE)/s$ is orthogonal. Using $V^{-(i-1)}$
in layer $i$ changes these upper blocks to $-sI_2$ and leaves
the diagonal blocks unchanged. Diagonalizing the first block
then splits $K_t$ into two scalar tridiagonal matrices, with
off-diagonal entries $-s$, later diagonal entries $2$, and first
entries $c/(c+a+h)$ and $c/(c+a-h)$. The latter is larger,
so only the first matrix needs to be tested.
Put $b=c/(c+a+h)\in(0,1)$.

The leading determinants satisfy $D_0=1$, $D_1=b$ and
$D_j=2D_{j-1}-s^2D_{j-2}$. With $\phi=\arctan r$ and
$\omega=\arctan(r/(1-b))$, their solution is
\[
 D_j=s^j\frac{\sin(\omega-j\phi)}{\sin\omega}.
\]
Since $0<\phi<\omega<\pi/2$, all leading determinants are
positive for $j<\omega/\phi$. If the ratio is an integer,
the pivots $D_j/D_{j-1}$ are positive until the last, which is
zero, so the boundary matrix is positive semidefinite.
At the first integer beyond the ratio the sine is negative.
Thus every larger matrix contains a principal submatrix with
negative determinant and is not positive semidefinite.
Thus $K_t\succeq0$ if and only if $t\le\omega/\phi$.
The identities
\[
 \sin\phi=\frac{\sqrt q}{2(q-1)},\qquad
 \frac r{1-b}=\frac q{q-2}
     \sqrt{\frac{2q-\sqrt q-2}{2q+\sqrt q-2}}
 =\frac q{q-2}\tan\left(\frac\pi4-\frac\phi2\right)
\]
identify this ratio as $T_q$. Moreover, $D_2>0$ is equivalent
to $(q-2)(2q-\sqrt q-2)>0$, so $T_q>2$.

The scalar base values $1$, $1-q$, and $c$ give the blocks
$(c-1)I_t+J_t$, $(q-1)(qI_t-J_t)$, and $cJ_t$.
Their only additional condition is $t\le q$, which is automatic
in the mixed equality range. Indeed, for $q\ge4$, testing $K_q$
on a layer-constant vector in the $-(a+h)$ eigenspace of $S$
gives $c-(q-1)(a+h)\le0$, so $K_q$ is not positive definite. Since the mixed matrices
are positive definite for every integer $t<T_q$, we obtain
$T_q\le q$, including the zero Rayleigh value when $q=4$.
For $q=3$,
$D_3=-32\sqrt3/(13(9+\sqrt3))<0$, so $T_3<3$.
Proposition~\ref{prop:general-arc-theta} now proves the cutoff
for $t\ge2$; $t=1$ is immediate. In the equality range,
$q^3\le\alpha(G(q,t))\le\vartheta(G(q,t))=q^3$.

Finally, the tangent addition formula gives
\[
 \omega_q=\frac\pi4-\frac{\phi_q}{2}
 +\arctan\left(\frac{\cos\phi_q}{q-1-\sin\phi_q}\right).
\]
For $z=q^{-1/2}$, the two small terms satisfy
\[
 \phi_q=\frac z2+\frac{25z^3}{48}+O(z^5),\qquad
 \arctan\left(\frac{\cos\phi_q}{q-1-\sin\phi_q}\right)
       =z^2+O(z^4).
\]
Division by $\phi_q$ gives \eqref{eq:theta-cutoff-expansion}.
\end{proof}

The finite formula includes the critical scale: equality is decided
by $t\le\lfloor T_q\rfloor$, even when $t/\sqrt q\to\pi/2$.
For example, $\lfloor T_3\rfloor=2$, $\lfloor T_5\rfloor=3$,
and $\lfloor T_{16}\rfloor=5$.
The independence transition is instead exactly $t=q$, by
Theorem~\ref{thm:affine}.

The equality range also determines coloring and zero-error capacity.
For a nonempty graph $H$ and integer $r\ge1$, its strong power
$H^{\boxtimes r}$ has vertex set $V(H)^r$; distinct tuples are
adjacent when their entries are equal or adjacent in every coordinate.
The Shannon capacity~\cite{Shannon} is
$\Theta(H)=\sup_{r\ge1}\alpha(H^{\boxtimes r})^{1/r}$.
We use the classical bounds
$\alpha(H)\le\Theta(H)\le\vartheta(H)$ of Lov\'asz~\cite{Lovasz}
and $\Theta(H)\le\rho_{\mathbb K}(H)$ from
Haemers' rank bound~\cite{Haemers}.
Together with \eqref{eq:theta-transfer}, these bounds translate
the preceding theta estimates into classical graph parameters.

\begin{corollary}[Coloring, capacity and the complementary theta value]
\label{cor:classical-parameters}
Let $q\ge3$ be a prime power, $t\ge1$, $H=G(q,t)$,
$m=q^3$ and $\eta_q=\vartheta(F_q)$. The exact independence and chromatic numbers are those in
Theorem~\ref{thm:affine}. Moreover,
\[
 \frac{mt}{m+(t-1)\eta_q}
 \le\vartheta(\overline H)
 \le\min\left\{t,\frac m{\eta_q}\right\},\qquad
 1\le\frac{\vartheta(H)\vartheta(\overline H)}{mt}
 \le1+\frac{m-\eta_q}{t\eta_q}.
\]
If $t\le T_q$, then
\[
 \alpha(H)=\Theta(H)=\vartheta(H)=q^3,
 \qquad \vartheta(\overline H)=\chi(H)=t.
\]
In this range, $\alpha(H^{\boxtimes r})=q^{3r}$ for every $r\ge1$.
For $q\ge5$, $\vartheta(H)=\alpha(H)$ holds if and only if
$t\le T_q$.
For fixed $q$,
$\vartheta(\overline{G(q,t)})\to q^3/\eta_q$ as $t\to\infty$. If $q\to\infty$ and $t/\sqrt q\to\infty$, then
$\vartheta(\overline{G(q,t)})=(2+o(1))\sqrt q$.
\end{corollary}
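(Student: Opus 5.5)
Most of the statement follows by combining \eqref{eq:theta-transfer}, Theorem~\ref{thm:theta-bounds} and Theorem~\ref{thm:theta-cutoff} with the vertex-transitivity of $H$. The independence and chromatic numbers are quoted from Theorem~\ref{thm:affine}.

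\emph{Bounds on $\vartheta(\overline H)$ and the product.} The lower bound on $\vartheta(\overline H)$ is $\vartheta(\overline H)\ge mt/\vartheta(H)$ combined with the upper bound $\vartheta(H)\le m+(t-1)\eta_q$ from Theorem~\ref{thm:theta-bounds}. The upper bound $\vartheta(\overline H)\le\min\{t,\vartheta(\overline F_q)\}$ comes from \eqref{eq:theta-transfer}. Since $F_q$ is vertex-transitive by Lemma~\ref{lem:affine-arc-transitive}, Knuth's identity gives $\vartheta(\overline F_q)=m/\eta_q$.

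For the product, I first show $H$ is vertex-transitive. Lemma~\ref{lem:affine-arc-transitive} gives transitivity within each layer, but layers are ordered, so I need a further symmetry. I would use the anti-automorphism that reverses layer order and sends $D_{B_q}$ to its reverse: the map exchanging points and lines, $(x,y)\mapsto$ line $y=xa-b$ style duality in the Wenger form $p_2+l_2=p_1l_1$. This swaps the roles of the two ends of a flag and so reverses arcs. Even this only gives an involution on layers, not full layer-transitivity. So rather than rely on it, I would prove the lower product bound $\vartheta(H)\vartheta(\overline H)\ge mt$ directly from the general inequality in Knuth. The upper product bound is $\vartheta(H)\le m+(t-1)\eta_q$ times $\vartheta(\overline H)\le m/\eta_q$. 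This gives $\vartheta(H)\vartheta(\overline H)\le m(m+(t-1)\eta_q)/\eta_q$, and dividing by $mt$ yields $1+(m-\eta_q)/(t\eta_q)$.

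\emph{The equality range $t\le T_q$.} Theorem~\ref{thm:theta-cutoff} gives $\alpha=\vartheta=q^3$, and $\Theta$ is sandwiched between them. Then $\alpha(H^{\boxtimes r})\ge\alpha(H)^r$ together with $\alpha(H^{\boxtimes r})\le\vartheta(H^{\boxtimes r})=\vartheta(H)^r$ gives $q^{3r}$. For the complement, $mt/\vartheta(H)=t\le\vartheta(\overline H)\le t$. Since $T_q\le q$, we have $\chi(H)=\min\{q,t\}=t$.

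\emph{The converse for $q\ge5$.} If $t>T_q$ then $\vartheta(H)>q^3$ by the cutoff, so equality with $\alpha$ would need $t\ge q$, where $\alpha=tq^2$. There I need $\vartheta(H)>tq^2$. Using $\vartheta(H)\ge tm b_q$, it suffices that $qb_q>1$, i.e.\ $q\beta_q>d+\beta_q$. With $\beta_q=(q-2)(1+\sqrt q)/2$ this reads $(q-1)(q-2)(1+\sqrt q)/2>(q-1)^2$, i.e.\ $(q-2)(1+\sqrt q)>2(q-1)$. This holds for $q\ge5$: at $q=5$, $3\cdot3.236>8$. The case $T_q<t<q$ is covered by the cutoff directly, since there $\alpha=q^3<\vartheta$. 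I expect this step to be the only real computation.

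\emph{Asymptotics of $\vartheta(\overline H)$.} For fixed $q$, the two-sided bound gives $mt/(m+(t-1)\eta_q)\to m/\eta_q$, matching the upper bound. When $t/\sqrt q\to\infty$, the lower bound is asymptotic to $m/\eta_q=1/b_q\sim2\sqrt q$, using the expansion of $b_q$ from Theorem~\ref{thm:theta-bounds}; the lower bound's deviation is controlled by $m/(t\eta_q)\sim2\sqrt q/t\to0$.
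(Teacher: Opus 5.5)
Your proposal is correct and follows the paper's proof essentially step for step: Knuth's vertex-transitive identity for $F_q$ together with \eqref{eq:theta-transfer} and Theorem~\ref{thm:theta-bounds} for the two-sided and product bounds, the cutoff theorem and the sandwich $\alpha\le\Theta\le\vartheta$ for the equality range, $\beta_q>q-1$ (equivalently $\eta_q>q^2$) for the converse, and squeezing for the limits. You can simply drop the digression on vertex-transitivity of $H$, which nothing uses. Your numerical check at $q=5$ should be replaced by the factorization $(q-2)(1+\sqrt q)-2(q-1)=\sqrt q(\sqrt q-2)(\sqrt q+1)$, which shows positivity for every $q\ge5$.
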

\begin{proof}
By Lemma~\ref{lem:affine-arc-transitive}, the base $F_q$ is
vertex-transitive, so $\vartheta(\overline{F_q})=m/\eta_q$.
Substituting Theorem~\ref{thm:theta-bounds} into
\eqref{eq:theta-transfer} gives the complementary theta bounds.
The product upper bound follows from
$\vartheta(H)\le m+(t-1)\eta_q$ and
$\vartheta(\overline H)\le m/\eta_q$; the product lower bound is classical.
When $t\le T_q$, Theorem~\ref{thm:theta-cutoff} gives
$\alpha(H)=\vartheta(H)=m$. The classical capacity bounds give
$\Theta(H)=m$, and $t=|V(H)|/\vartheta(H)\le
\vartheta(\overline H)\le\chi(H)\le t$ gives the coloring equalities.
Products of a maximum independent set provide $m^r$ vertices in
every strong power; the definition of $\Theta$ gives the reverse bound.
For $q\ge5$, we have $\beta_q>q-1$ and hence $\eta_q>q^2$.
If $T_q<t\le q$, the cutoff theorem gives $\vartheta(H)>m=\alpha(H)$;
if $t>q$, then $\vartheta(H)\ge t\eta_q>tq^2=\alpha(H)$.
This proves the additional equivalence.
The fixed-$q$ limit follows by squeezing, while
$\eta_q=(1/2+o(1))q^{5/2}$ and $t/\sqrt q\to\infty$
give the last asymptotic formula.
\end{proof}

For $q=2$ and $t\ge2$, Theorem~\ref{thm:affine} gives
$\alpha(G(2,t))=4t$ and $\chi(G(2,t))=2$.
The matching in Theorem~\ref{thm:exactmsr} supplies a cover by
$4t$ cliques, so the Lov\'asz sandwich inequalities~\cite{Lovasz}
also give $\Theta(G(2,t))=\vartheta(G(2,t))=4t$.
For $t=1$, both values are $8$.

\subsection*{Prescribed orders and comparison of the parameters}

If $H$ is an induced subgraph of $G$, then $A_H+I$ is a principal
submatrix of $A_G+I$, so $\rho_{\mathbb K}(H)\le\rho_{\mathbb K}(G)$
over every field. The Lov\'asz number and independence number are
also monotone under induced subgraphs, while chromatic number is
nonincreasing on passage to an induced subgraph.
We retain complete layers and insert pairs of vertices into a spanning
cycle, preserving the parameter asymptotics and the prescribed order.

\begin{proposition}[A parameterized construction at every order]
\label{prop:all-orders-parameter}
For every fixed $a>0$ and integer $n\ge1$, there is a triangle-free
graph $H_{n,a}$ of order $n$ with
$\msr(H_{n,a})=\chi(\overline{H_{n,a}})=\lceil n/2\rceil$.
For every sufficiently large $n$, the graph $H_{n,a}$ is Hamiltonian.
As $n\to\infty$,
\[
 \begin{aligned}
 \rho_{\mathbb K}(H_{n,a})
 &=\bigl((1+2a)a^{-3/4}+o(1)\bigr)n^{3/4},\\
 \vartheta(H_{n,a})
 &=\bigl(\tfrac12a^{1/8}+o(1)\bigr)n^{7/8},\\
 \vartheta(\overline{H_{n,a}})
 &=\bigl(2a^{-1/8}+o(1)\bigr)n^{1/8}.
 \end{aligned}
\]
The choice of vertices is independent of $\mathbb K$, and the rank
error is uniform in that field. In addition,
\[
 \alpha(H_{n,a})=\bigl(\max\{1,a\}a^{-3/4}+o(1)\bigr)n^{3/4},
 \qquad
 \chi(H_{n,a})=\bigl(\min\{1,a\}a^{-1/4}+o(1)\bigr)n^{1/4}.
\]
In particular, $\alpha(H_{n,a})\chi(H_{n,a})=(1+o(1))n$, and
the Shannon capacity $\Theta(H_{n,a})$ has order $n^{3/4}$.
\end{proposition}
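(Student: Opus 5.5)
We take induced subgraphs of $G(q,t)$ with $q$ an odd prime power and the number of layers proportional to $q$. Given $n$, choose $q$ as the largest odd prime power with $q^3\lceil a q\rceil\le n$. By the prime number theorem for primes in short intervals (Baker--Harman--Pintz gives a prime in $[x,x+x^{0.525}]$), $q=(1+o(1))(n/a)^{1/4}$. Put $r=\lfloor n/q^3\rfloor$, so $r=(a+o(1))q$. Write $n=rq^3+R$ with $0\le R<q^3$. If $R$ is even, take $h=R/2\le q^3$; Lemma~\ref{lem:hamilton-lift}, applied to $D=D_{B_q}$ (with $m=q^3$ odd and a directed Hamilton cycle from Witte's theorem as in Theorem~\ref{thm:exactmsr}), gives a Hamiltonian induced subgraph of $G(q,r+2)$ of order $rq^3+2h=n$ that contains $r$ complete layers. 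If $R$ is odd, we use $r-1$ complete layers and $h=(R+q^3)/2\le q^3$ instead, which works because $q^3$ is odd. Call the result $H_{n,a}$. Small $n$, where $r<3$ or $q$ does not exist, are handled separately by any triangle-free graph, for instance a path; only the $\msr$ claim is needed there, and it follows from Corollary~\ref{prop:matchingmsr} together with a near-perfect matching. Triangle-freeness holds because $H_{n,a}$ is an induced subgraph of $G(q,r+2)$.

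The claim $\msr=\chi(\overline H)=\lceil n/2\rceil$ comes from Corollary~\ref{prop:matchingmsr} once the Hamilton cycle supplies a matching of size $\lfloor n/2\rfloor$.

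For the parameter asymptotics, write $r'\in\{r-1,r\}$ for the number of complete layers. Then $G(q,r')\subseteq H_{n,a}\subseteq G(q,r'+2)$ as induced subgraphs, and $\rho_{\mathbb K}$, $\vartheta$ and $\alpha$ are monotone under induced subgraphs. The rank satisfies $\rho_{\mathbb K}\le q^3+(2q^2-1)(r'+1)$ by Theorem~\ref{thm:affine}. For the lower bound, Corollary~\ref{cor:affine-common-field} gives $\rho_{\mathbb K}\ge q^3+(r'-2)(2q^2-2)$. Both sides equal $q^3+2aq^3+o(q^3)$, uniformly in $\mathbb K$, and substituting $q^3=(n/a)^{3/4}$ gives $(1+2a)a^{-3/4}n^{3/4}$.

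For theta, since $t/\sqrt q\to\infty$, Theorem~\ref{thm:theta-bounds} gives $\vartheta=(1/2+o(1))aq^{7/2}=\tfrac12 a^{1/8}n^{7/8}$. Corollary~\ref{cor:classical-parameters} gives $\vartheta(\overline G)=(2+o(1))\sqrt q$ for both sandwiching graphs. This transfers to $H_{n,a}$ because $\vartheta(\overline{\cdot})$ is also monotone: the complement of an induced subgraph is an induced subgraph of the complement. The result is $2a^{-1/8}n^{1/8}$.

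Theorem~\ref{thm:affine} gives $\alpha=\max\{q^3,tq^2\}\sim\max\{1,a\}q^3$ and $\chi=\min\{q,t\}\sim\min\{1,a\}q$, with $t\in\{r',r'+2\}$; both are monotone, so they transfer, and their product is $(1+o(1))n$. For the Shannon capacity, $\alpha\le\Theta\le\rho$ gives order $n^{3/4}$.

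The main obstacle is the bookkeeping of orders: keeping $q$ odd, so that $m$ is odd and Lemma~\ref{lem:hamilton-lift} applies, while having $q\sim(n/a)^{1/4}$. This needs primes in short intervals, and it also needs the parity adjustment for $R$ that keeps $h\le m$.
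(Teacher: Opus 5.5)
Your proposal is correct and follows the paper's route. You realize $H_{n,a}$ via Lemma~\ref{lem:hamilton-lift} as a Hamiltonian induced subgraph lying between $G(q,k)$ and $G(q,k+2)$, with $k\equiv n\pmod 2$ and $k\sim aq$; your parity split on $R$ is exactly the paper's choice of $k$. You then transfer rank, $\vartheta$, $\vartheta$ of the complement, $\alpha$ and $\chi$ by induced-subgraph monotonicity.

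The differences are cosmetic:
\begin{itemize}
\item The paper takes $q$ to be the least prime at least $\max\{7,(n/a)^{1/4}\}$. The ordinary prime number theorem suffices for that choice, and equally for yours, so Baker--Harman--Pintz is not needed.
\item The paper writes the directed Hamilton cycle down explicitly instead of invoking Witte.
\item The paper bounds $\vartheta(\overline{H_{n,a}})$ from below by $n/\vartheta(H_{n,a})$ rather than by $\vartheta(\overline{G(q,k)})$.
\end{itemize}

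One small slip: the fallback construction should be used whenever the number $r'$ of complete layers is below three, not only when $r<3$, since the odd-$R$ case needs $r-1\ge3$. This affects only finitely many $n$.
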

\begin{proof}
Let $q$ be the least prime at least $\max\{7,(n/a)^{1/4}\}$.
By the prime number theorem, $q\sim(n/a)^{1/4}$.
We first specify a directed Hamilton cycle in $D_{B_q}$, using
its flag coordinates $(b,x,y)$, where $b$ is the slope, as in
Theorem~\ref{thm:exactmsr}.
In $\mathbb F_q^2$, list $(b,x)=(i,j+i)$ with $j$ and then $i$
running from $0$ to $q-1$, and swap the entries at positions
$q-1$ and $q$, numbered from zero. Join this list cyclically.
Before the swap the steps are $(1,1)$ and $(1,2)$; the three
changed steps are $(2,3),(-1,-2),(2,3)$, so both coordinates
still change. Lift a step to $(b',x')$ by setting
$y'=y+b'(x'-x)$, which is exactly the directed flag relation.
The sum of these increments is zero before the swap and increases
by $5$ after it. Repeating the projected tour $q$ times therefore
visits every flag exactly once and returns to its start, since
$5\ne0$ in the prime field $\mathbb F_q$.

Put $m=q^3$, and let $k$ be the greatest integer at most $n/m$
with $k\equiv n\pmod2$. For large $n$, we have $k\sim aq$ and
$k\ge3$. Set $t=k+2$ and $h=(n-km)/2$, so $0\le h<m$.
Lemma~\ref{lem:hamilton-lift}, applied to this explicit base cycle,
gives a Hamiltonian induced subgraph $H_{n,a}$ of $G(q,t)$ with
$n$ vertices and $k$ complete layers. Its spanning cycle gives a
matching of size $\lfloor n/2\rfloor$, so
Corollary~\ref{prop:matchingmsr} proves the two claimed equalities.

We have induced containments
$G(q,k)\subseteq H_{n,a}\subseteq G(q,t)$, with $k=t-2$.
Set $s_{\mathbb K}=2q^2-1-\varepsilon_{\mathbb K}(q)$.
By Corollary~\ref{cor:affine-common-field} and induced-subgraph
monotonicity, we obtain
\[
 \begin{aligned}
 m+(k-2)s_{\mathbb K}
 &\le\rho_{\mathbb K}(H_{n,a})\le m+(t-1)s_{\mathbb K},\\
 \vartheta(G(q,k))
 &\le\vartheta(H_{n,a})\le\vartheta(G(q,t)).
 \end{aligned}
\]
Since $s_{\mathbb K}=2q^2+O(1)$ uniformly in $\mathbb K$, the
rank is $(1+2a+o(1))q^3$ with a uniform error. By
Theorem~\ref{thm:theta-bounds}, the theta value is
$(a/2+o(1))q^{7/2}$. Substituting $n\sim aq^4$ proves both
asymptotic formulas for rank and theta.
For the complement, induced-subgraph monotonicity and
Corollary~\ref{cor:classical-parameters} give
\[
 \frac{n}{\vartheta(H_{n,a})}
 \le\vartheta(\overline{H_{n,a}})
 \le\vartheta(\overline{G(q,t)})
 \le\frac{q^3}{\vartheta(F_q)}=(2+o(1))\sqrt q.
\]
The first and last expressions have the same leading term, proving
the complementary theta formula. The same induced containments,
now applied to the exact formulas in Theorem~\ref{thm:affine}, give
\[
 \begin{aligned}
 \max\{q^3,kq^2\}&\le\alpha(H_{n,a})\le\max\{q^3,tq^2\},\\
 \min\{k,q\}&\le\chi(H_{n,a})\le\min\{t,q\}.
 \end{aligned}
\]
Since $k=t-2\sim aq$ and $n\sim aq^4$, both pairs have matching
leading terms. This proves the independence and coloring formulas
and their product. The inequalities $\alpha\le\Theta\le\rho$ give the
stated order for capacity.
When the computed value of $k$ is less than three, use a matching and
at most one isolated vertex instead. This affects only finitely many orders.
\end{proof}

\Needspace{5\baselineskip}
\begin{proof}[Proof of Theorem~\ref{thm:main}]
Proposition~\ref{prop:all-orders-parameter} with $a=3/2$ gives
both asymptotic assertions, with rank coefficient $C_*$ and
theta coefficient $\tfrac12(3/2)^{1/8}$. Since $C_*<3$, it remains
to give graphs at every order with rank at most $3n^{3/4}$
and a matching of size $\lfloor n/2\rfloor$.

For $n\ge2401$, let $q$ be the largest prime power not exceeding
$y=n^{1/4}$. We first check that $13y/16\le q\le y$. Indeed, the consecutive
prime powers from $7$ to $25$ are
$7,8,9,11,13,16,17,19,23,25$, with successive ratios at most
$16/13$. If $q\ge25$, Nagura's theorem~\cite[p.~180]{Nagura}
gives a prime $q<p<6q/5$. Maximality of $q$ forces $y<p$,
so $q>5y/6$.
Put $t=\lceil n/q^3\rceil\ge2$. By Theorem~\ref{thm:exactmsr},
we may retain $\lfloor n/2\rfloor$ matching pairs in $G(q,t)$
and one further vertex when $n$ is odd. The induced graph $H$ satisfies
\[
 \rho_{\mathbb K}(H)
 \le q^3+(2q^2-1)(t-1)<q^3+\frac{2n}{q}.
\]
The convex function $f(u)=u^3+2/u$ has
$f(13/16)=3-111/53248$ and $f(1)=3$. Taking
$u=q/n^{1/4}\in[13/16,1]$ proves
$\rho_{\mathbb K}(H)<3n^{3/4}$ over every field.

For $n\le1250$, use a matching and at most one isolated vertex.
For $1251\le n\le1800$, use as a host the $1374$ endpoints of
a largest matching in $G(5,11)$; for $1801\le n\le2400$, use
$G(7,6)$. These hosts have perfect matchings and respective
order--rank bounds $(N,R)=(1374,615)$ and $(2058,828)$.
Retaining matching pairs and, for odd $n$, one further vertex,
or appending a disjoint matching and at most one isolated vertex,
gives the requested order. Writing $x_+=\max\{x,0\}$, the rank
is at most $R+\lceil(n-N)_+/2\rceil$.
For these hosts, the bound $R+\tfrac12+\tfrac12(n-N)_+$
divided by $n^{3/4}$ decreases before padding and increases
afterwards. At $n=1251$ its numerator is $1231/2<1251/2$;
at $n=1801$ it is $1657/2$, equal to the preceding range's
upper-end numerator. The endpoint comparisons therefore reduce to
\[
 \max\left\{
 \frac{1251}{2\cdot1250^{3/4}},\,
 \frac{1657}{2\cdot1800^{3/4}},\,
 \frac{1999}{2\cdot2400^{3/4}}
 \right\}<3,
\]
as follows by taking fourth powers. The initial matching range
uses the monotonicity of $(n+1)/(2n^{3/4})$ for $n\ge3$;
the orders $1,2$ are immediate.

Every graph constructed has a matching of size $\lfloor n/2\rfloor$,
so Corollary~\ref{prop:matchingmsr} gives
$\msr(H)=\lceil n/2\rceil$. Choose the asymptotic construction whenever its explicit rank
upper bound is at most $3n^{3/4}$, and the finite construction
otherwise. Since $C_*<3$, the first choice applies eventually.
The first construction is Hamiltonian for every sufficiently large
order, so the chosen sequence has this property as well.
This gives one explicit sequence with all the assertions.
\end{proof}

For complete affine lifts with $t\sim aq$, the rank coefficient
$(1+2a)a^{-3/4}$ is minimized at $a=3/2$. The normalized ratio
$\vartheta(G(q,t))/(\rho(G(q,t))|V(G(q,t))|^{1/8})$ has leading
coefficient $a^{7/8}/(2(1+2a))$, whose derivative has the sign of
$7-2a$. Its maximum occurs at $a=7/2$ and is attained asymptotically
by $G(q,\lfloor7q/2\rfloor)$.

The Rosenfeld--Deaett inequality gives
$\rho(G)\le |V(G)|\le2\msr(G)$ on triangle-free graphs;
thus our polynomial semidefinite-rank separation is compatible
with the general linear bounds.

\subsection*{A Rank--Ramsey consequence}

Beniamini et al.\ \cite[Proposition~2.3]{BLS} observed that
replacing graph vertices by cliques preserves complement rank:
$A_H+I=P(A_G+I)P^{\mathsf T}$ for the vertex-repetition matrix $P$.
Following their notation, $R^k(s,t)$ is the least order at which
every graph has an $s$-clique or complement rank at least $t$.
Thus Theorem~\ref{thm:main} gives, for fixed $s\ge3$,
\[
 R^k(s,t)\ge
 \left\lfloor\frac{s-1}{2}\right\rfloor
 \left(\frac{3}{2^{11/3}}+o(1)\right)t^{4/3}
 \qquad(t\to\infty).
\]
Indeed, blow up each vertex of the triangle-free example to a
clique of size $\lfloor(s-1)/2\rfloor$ and use the
asymptotic rank bound. This application is a direct consequence
of the established clique substitution principle, not a new
substitution theorem.

\section*{Declaration of competing interest}

The authors declare that they have no known competing financial interests or
personal relationships that could have appeared to influence the work
reported in this paper.

\section*{Data availability}

No data were used for the research described in this article.

\section*{Declaration on the use of AI}

The authors used ChatGPT  to assist in discussing proof strategies, checking proofs, and improving exposition.

\begingroup\small\raggedright\endgroup

\begin{thebibliography}{99}\setlength{\itemsep}{0pt}\setlength{\parsep}{0pt}
\bibitem{AKS}
M. Ajtai, J. Koml\'os and E. Szemer\'edi,
A note on Ramsey numbers,
\emph{J. Combin. Theory Ser. A} \textbf{29} (1980), 354--360.
\bibitem{BallaLetzterSudakov}
I. Balla, S. Letzter and B. Sudakov,
Orthonormal representations of $H$-free graphs,
\emph{Discrete Comput. Geom.} \textbf{64} (2020), 654--670.
\bibitem{BBIR}
J. Bamberg, A. Bishnoi, F. Ihringer and A. Ravi,
New bounds and constructions for large partial $m$-ovoids and related structures,
\emph{European J. Combin.} \textbf{139} (2027), 104457.
\bibitem{BLS}
G. Beniamini, N. Linial and A. Shraibman,
The Rank--Ramsey problem and the log-rank conjecture,
\emph{Combin. Probab. Comput.} \textbf{35} (2026), 568--597.
\bibitem{Berge}
C. Berge,
Sur le couplage maximum d'un graphe,
\emph{C. R. Acad. Sci. Paris} \textbf{247} (1958), 258--259.
\bibitem{CLL}
S. M. Cioab\u{a}, F. Lazebnik and W. Li,
On the spectrum of Wenger graphs,
\emph{J. Combin. Theory Ser. B} \textbf{107} (2014), 132--139.
\bibitem{CPR}
B. Codenotti, P. Pudl\'ak and G. Resta,
Some structural properties of low-rank matrices related to
computational complexity,
\emph{Theoret. Comput. Sci.} \textbf{235} (2000), 89--107.
\bibitem{Deaett}
L. Deaett,
The minimum semidefinite rank of a triangle-free graph,
\emph{Linear Algebra Appl.} \textbf{434} (2011), 1945--1955.
\bibitem{dKPS}
E. de Klerk, D. V. Pasechnik and A. Schrijver,
Reduction of symmetric semidefinite programs using the regular
$*$-representation,
\emph{Math. Program.} \textbf{109} (2007), 613--624.
\bibitem{Feige}
U. Feige,
Randomized graph products, chromatic numbers, and the Lov\'asz
$\vartheta$-function,
\emph{Combinatorica} \textbf{17} (1997), 79--90.
\bibitem{Haemers}
W. Haemers,
On some problems of Lov\'asz concerning the Shannon capacity of a graph,
\emph{IEEE Trans. Inform. Theory} \textbf{25} (1979), 231--232.
\bibitem{Hall}
P. Hall,
On representatives of subsets,
\emph{J. London Math. Soc.} \textbf{10} (1935), 26--30.
\bibitem{HHR}
F. Harary, S. T. Hedetniemi and R. W. Robinson,
Uniquely colorable graphs,
\emph{J. Combin. Theory} \textbf{6} (1969), 264--270.
\bibitem{Higham}
N. J. Higham,
The power of bidiagonal matrices,
\emph{Electron. J. Linear Algebra} \textbf{40} (2024), 453--474.
\bibitem{KSV}
P. Keevash, B. Sudakov and J. Verstra\"ete,
On a conjecture of Erd\H{o}s and Simonovits: even cycles,
\emph{Combinatorica} \textbf{33} (2013), 699--732.
\bibitem{Kim}
J. H. Kim,
The Ramsey number $R(3,t)$ has order of magnitude $t^2/\log t$,
\emph{Random Structures Algorithms} \textbf{7} (1995), 173--207.
\bibitem{Knuth}
D. E. Knuth,
The sandwich theorem,
\emph{Electron. J. Combin.} \textbf{1} (1994), A1.
\bibitem{Kocbek}
M. Kocbek,
Explicit geometric construction of Ramsey graphs,
\emph{arXiv preprint}
\href{https://arxiv.org/abs/2507.09235v3}{arXiv:2507.09235v3} (2025).
\bibitem{KPR}
A. Kostochka, P. Pudl\'ak and V. R\"odl,
Some constructive bounds on Ramsey numbers,
\emph{J. Combin. Theory Ser. B} \textbf{100} (2010), 439--445.
\bibitem{Lovasz}
L. Lov\'asz,
On the Shannon capacity of a graph,
\emph{IEEE Trans. Inform. Theory} \textbf{25} (1979), 1--7.
\bibitem{LovaszSaks}
L. Lov\'asz and M. Saks,
Lattices, M\"obius functions and communication complexity,
in \emph{29th Annual Symposium on Foundations of Computer Science
(FOCS 1988)}, IEEE Computer Society (1988), 81--90.
\bibitem{Lovett}
S. Lovett,
Communication is bounded by root of rank,
\emph{J. ACM} \textbf{63} (2016), 1:1--1:9.
\bibitem{Nagura}
J. Nagura,
On the interval containing at least one prime number,
\emph{Proc. Japan Acad.} \textbf{28} (1952), 177--181.
\bibitem{Reiman}
I. Reiman,
\"Uber ein Problem von K. Zarankiewicz,
\emph{Acta Math. Acad. Sci. Hungar.} \textbf{9} (1958), 269--273.
\bibitem{Shannon}
C. E. Shannon,
The zero error capacity of a noisy channel,
\emph{IRE Trans. Inform. Theory} \textbf{2} (1956), 8--19.
\bibitem{SudakovTomon}
B. Sudakov and I. Tomon,
Matrix discrepancy and the log-rank conjecture,
\emph{Math. Program.} \textbf{212} (2025), 567--579.
\bibitem{Torres}
L. Torres,
Geometric multiplicity of unitary non-backtracking eigenvalues,
\emph{arXiv preprint}
\href{https://arxiv.org/abs/2205.02004v2}{arXiv:2205.02004v2}
(2026; first posted 2022).
\bibitem{Witte}
D. Witte,
Cayley digraphs of prime-power order are hamiltonian,
\emph{J. Combin. Theory Ser. B} \textbf{40} (1986), 107--112.
\end{thebibliography}
\end{document}